\documentclass[12pt,parskip=half]{scrartcl}

\usepackage{latexsym, amssymb,  amsmath, amsthm}

\usepackage[dvipsnames]{xcolor}
\usepackage{graphicx}

\usepackage[shortlabels]{enumitem}
\numberwithin{equation}{section}

\usepackage{mathtools}
\mathtoolsset{showonlyrefs}
\newcommand{\define}{\coloneqq}
\newcommand{\enifed}{\eqqcolon}
\usepackage{bm}

\newtheorem{theorem}{Theorem}[section]
\newtheorem{lemma}[theorem]{Lemma}
\newtheorem{corollary}[theorem]{Corollary}
\newtheorem{proposition}[theorem]{Proposition}

\theoremstyle{definition}

\newtheorem{definition}[theorem]{Definition}
\newtheorem{assumption}[theorem]{Assumption}
\newtheorem{remark}[theorem]{Remark}

\theoremstyle{remark}

\newcommand{\field}[1]{\mathbb{#1}}
\newcommand{\N}{\field{N}}
\newcommand{\R}{\field{R}}
\newcommand{\C}{\field{C}}
\newcommand{\divv}{\operatorname{div}}
\newcommand{\supp}{\mathop\textrm{supp}}

\newcommand{\esssup}{\operatorname{ess\,sup}}
\newcommand{\blangle}{\bigl\langle}
\newcommand{\brangle}{\bigr\rangle}
\newcommand{\eps}{\varepsilon}
\newcommand{\embeds}{\hookrightarrow}
\renewcommand{\Re}{\operatorname{Re}\,}
\renewcommand{\Im}{\operatorname{Im}\,}

\hyphenation{Lip-schitz}

\newcommand{\cA}{\mathcal{A}}
\newcommand{\cF}{\mathcal{F}}
\newcommand{\cH}{\mathcal{H}}
\newcommand{\cI}{\mathcal{I}}
\newcommand{\cL}{\mathcal{L}}
\newcommand{\cN}{\mathcal{N}}
\newcommand{\cS}{\mathcal{S}}

\newcommand{\fA}{A}
\newcommand{\fD}{\mathcal{D}}
\newcommand{\fI}{\mathfrak{I}}

\newcommand{\bW}{\mathbb{W}}
\newcommand{\bmW}{\bm{W}}
\newcommand{\bmw}{\bm{w}}
\newcommand{\tA}{\bm{A}}
\newcommand{\tB}{\bm{B}}
\newcommand{\bmu}{\bm{u}}
\newcommand{\bmv}{\bm{v}}
\newcommand{\bmL}{\bm{L}}
\newcommand{\id}{\mathrm{id}}

\usepackage[sorting=nyt,sortcites=true,giveninits=true,backend=biber,isbn=false,url=false]{biblatex}
\addbibresource{literature.bib}

\usepackage[colorlinks=true,allcolors={Fuchsia}]{hyperref}
\hypersetup{final}

\begin{document}

\begin{center}
  {\large \textbf{Nonautonomous maximal parabolic regularity for nonsmooth quasilinear parabolic systems}}\\[0.5em]
  Hannes Meinlschmidt\footnote{FAU Erlangen-N\"urnberg, Department of
    Mathematics\\Chair for Dynamics, Control, Machine Learning and
    Numerics (AvH Professorship)\\Cauerstra{\ss}e 11, 91058 Erlangen
    (Germany)\\meinlschmidt [at] math.fau.de\\ORCiD 0000-0002-5874-8017}
\end{center}

\begin{abstract}
  In this paper we are concerned with $L^p$-maximal parabolic
  regularity for abstract nonautonomous parabolic systems and their
  quasilinear counterpart in negative Sobolev spaces incorporating
  mixed boundary conditions. Our results are derived in the
  setting of nonsmooth domains with mixed boundary conditions by an
  extrapolation technique which also yields uniform estimates for the
  parabolic solution operators. We require only very mild boundary
  regularity, not in the Lipschitz-class, and generally only bounded
  and measurable complex coefficients. The nonlinear functions in the
  quasilinear formulation can be nonlocal-in-time; this allows also to
  consider certain systems whose stationary counterpart fails to
  satisfy the usual ellipticity conditions.
\end{abstract}
\emph{Key words and phrases:} initial boundary value problems for
parabolic systems, extrapolation of maximal parabolic regularity,
quasilinear parabolic systems

\emph{2020 MSC:} 35K40, 35K51, 35K59, 35K90, 46B70

\section[Introduction]{Introduction} \label{sec:intro}

The nonautonomous systems that we consider arise from $m$ parabolic
equations in divergence form for the functions $\bmu= (u_1,\dots,u_m)$,
\begin{equation}\label{eq:mainEqStrong}
  u_i'(t) + L_t^i \bmu(t) = f_i(t), \qquad u_i(0) = u_0^i  \qquad (i=1,\dots,m),
\end{equation}
with 
\begin{equation*}
  L_t^i\bmu(t) \coloneqq - \sum_{j=1}^m \divv\bigl(\fA^{ij}_t \nabla u_j(t) +
  b^{ij}_t u_j(t)\bigr) +\sum_{j=1}^m
  c^{ij}_t\cdot\nabla u_j(t)  + \sum_{j=1}^md^{ij}_tu_j(t),
\end{equation*}
where
the coefficients are complex, bounded and measurable depending on time $t$ and
the spatial variable. (We do not spell out the latter explicitly.) We consider~\eqref{eq:mainEqStrong} on a finite
time interval $(0,T)$ and a bounded domain
$\Omega \subset \R^d$, and each equation shall be cast in a negative
Sobolev space $ W^{-1,q}_{D_i}(\Omega)$ with
$f_i \in L^r(0,T;W^{-1,q}_{D_i}(\Omega))$ for some
$r,q \in (1,\infty)$. The space
$W^{-1,q}_{D_i}(\Omega)$ is the anti-dual space of
$W^{1,q'}_{D_i}(\Omega)$ with $1/q + 1/q' = 1$.  All objects will be
introduced properly in the main text. The index $D_i$ signifies a
homogeneous Dirichlet condition for $u_i$ on the subset
$D_i \subseteq \partial\Omega$ of the boundary, so the abstract
understanding of~\eqref{eq:mainEqStrong} in $W^{-1,q}_{D_i}(\Omega)$
includes mixed boundary conditions; it also allows to consider
inhomogeneous Neumann boundary data for $u_i$ absorbed in $f_i$. The
system will be complemented by initial values. % Collecting all matrix
% coefficients $\fA^{ij}$ in a single (time-dependent) coefficient
% tensor $\fA$, to be imagined as a matrix of matrices, we obtain the
% short-hand system
Collecting the differential operators $\bmL_t = (L_t^1,\dots,L^m_t)$, we obtain the short hand system
\begin{equation} \label{eq:mainEqNonAuto} % u'(t) -\nabla \cdot \fA_t \nabla
  % u(t) = f(t) \qquad \text{in } \bmW^{-1,q}_\fD(\Omega),
  \bmu'(t) + \bmL_t \bmu(t) = \bm{f}(t) \qquad \text{in } \bmW^{-1,\bm{q}}_\fD(\Omega),
  %\qquad  u(0)=u_0,
\end{equation}
with $\bm{f} = (f_1,\dots,f_m) \in L^r(0,T;\bmW^{-1,q}_\fD(\Omega))$ and
$\bmW^{-1,q}_\fD(\Omega) = \prod_{i=1}^m W^{-1,q}_{D_i}(\Omega)$.

Nonautonomous $L^p$-maximal parabolic regularity
for problem~\eqref{eq:mainEqNonAuto} with \emph{constant
  domains} $\bmW^{1,q}_\fD(\Omega)$) means that for every
$f \in L^r(0,T;\bmW^{-1,q}_\fD(\Omega))$, there exists a unique
solution $\bmu$ to~\eqref{eq:mainEqNonAuto} satisfying
\begin{equation}\label{eq:max-reg-reg}
  \bmu \in W^{1,r}\bigl(0,T;\bmW^{-1,q}_\fD(\Omega)\bigr) \cap
  L^r\bigl(0,T;\bmW^{1,q}_\fD(\Omega)\bigr), \qquad \bmu(0) = 0.
\end{equation}
In this paper, we establish this property of $L^p$-maximal parabolic regularity
for nonautonomous systems as in~\eqref{eq:mainEqNonAuto} as a main result. This
is a notoriously difficult question in a nonsmooth setting, which we tackle by
an extrapolation technique from the Hilbert space case $r=q=2$. This way, we
also obtain bounds on the associated parabolic solution operators which are
uniform in the given coefficient data. We pose an ellipticity
assumption in the form of a G{\aa}rding inequality.
% Our results will be
% derived in the setting of nonsmooth domains with very mild boundary
% regularity, not in the Lipschitz-class, and generally only bounded and
% measurable coefficients.

In a second step, we then use the maximal regularity results within the
machinery of Amann~\cite{Ama05b} to obtain several wellposedness results in the
maximal regularity class as in~\eqref{eq:max-reg-reg}---both local-in-time and
global-in-time---for abstract quasilinear systems under minimal assumption of
the (short-hand) form
\begin{equation} \label{eq:mainEqQL} \bm{u'}(t) + 
  % -\nabla \cdot \Psi(u)_t \nabla u(t)
  \bmL(\bmu)_t\bmu(t)
  = \Phi(\bmu)(t) \qquad \text{in}~\bmW^{-1,q}_\fD(\Omega),
  \qquad \bmu(0) = \bm{u_0},
\end{equation}
with
\begin{multline*}
  \bigl[\bmL(\bmu)^i_t\bigr]\bmu(t) \coloneqq - \sum_{j=1}^m \divv\bigl(\fA^{ij}(\bmu)_t \nabla u_j(t) +
  b^{ij}(\bmu)_t u_j(t)\bigr)\\ +\sum_{j=1}^m
  c^{ij}(\bmu)_t\cdot\nabla u_j(t)  + \sum_{j=1}^md^{ij}(\bmu)_tu_j(t).
\end{multline*}
It will be a defining feature of~\eqref{eq:mainEqQL} that both the coefficients
and the right-hand side $\Phi$ are allowed to depend on $\bmu$ in a
\emph{nonlocal-in-time} manner. (This is why we write $\bmL(\bmu)_t$ instead of
$\bmL(\bmu(t))$, and analogously for $\Phi$!) Under certain conditions, this
also allows to treat systems whose stationary counterpart does not satisfy one
of the usual ellipticity conditions; for example, some multi-species models in
chemotaxis have this property.

Let us next first give some context for our results before we coming
to an overview of the content of this work.

% \paragraph{Context.}
\subsubsection*{Context and overview}
The property of nonautonomous $L^p$-maximal parabolic regularity is
recognized as very useful and versatile; first of all, it is of course
an immediate tool to treat linear nonautonomous parabolic evolution
equations and systems, which are ubiquitous in natural sciences---we
refer to the introduction of~\cite{Amann95} and the references
therein---, obtaining sharp and optimal regularity results. It is also
most important when considering quasilinear problems and their
linearizations, which give rise to nonautonomous problems in a natural
way.  Many wellposedness results for quasilinear parabolic evolution
equations build upon this very property by using it for linearizations
of the nonlinear problem in fixed point schemes, see
e.g.~\cite{Ama05b,Pru02}. Sharp results for such linearizations, and,
by transposition, their adjoint versions, are also of great interest
when considering optimal control problems subject to quasilinear
evolution equations as considered in
e.g.~\cite{MMR17a,MMR17b,Hoppe2022,Hoppe2022a,Hoppe2020}. % The functional-analytic
% reformulation of maximal regularity in terms of the parabolic operator
% being a topological isomorphism is also particularly well suited for
% optimal control considerations.

In fact, local-in-time wellposedness for---even nonlinear---parabolic systems
with smooth data is well established since long, see for
example~\cite{Ama86,GM87,DHP07,DHP03} or, more recently,
e.g.~\cite{Piasecki2020}. Today, similar results are also
available even in case of highly nonsmooth data for one \emph{single} equation,
cf.~\cite{HR09,ABHR14,tEMR14,MMR17a}. For actual \emph{systems} with nonsmooth
data, there seem to be very little results available---see~\cite{Gallarati2017}
however---, in particular when mixed
boundary conditions are considered.  Such would be of high interest in view of
several interesting and relevant applications in the natural sciences; we refer
to e.g.~\cite{AGH02,Hor10,WinklerTao2015} and the references therein. % In fact,
% wellposedness of parabolic systems with nonsmooth data was announced by Herbert
% Amann in his talk at the workshop \emph{Maxwell-Stefan meets Navier-Stokes:
%   Modeling and Analysis of Reactive Multi-Component Flows} in Halle a.d.~Saale/Germany in~2014 as one of the outstanding problems in this area which should be addressed
% with priority.
  
% This however is a highly nontrivial task because systems lack
% many properties which are essential parts of the theory
% developed for the scalar case; these include the maximum
% principle\todo{Zitat}, Gaussian
% estimates~\cite{AtE97},~\cite[Ch.~6]{Ouhabaz09}, \emph{a priori}
% H\"older estimates~\cite{tER15,MR16,DtER15}, or contractivity
% properties for the corresponding semigroups in, say,
% $L^p(\Omega)$~\cite{CM05,HKR12,Lam87},~\cite[Ch. 2]{Ouhabaz09}.
% are not available for systems in general any longer (\, I)
% cf.~\cite{MK86},~\cite{KM83},~\cite{KM99}), II) cf.~\cite[Ch. 6
% Notes]{Ouhabaz09} and references therein, III)
% cf.~\cite{Str84},~\cite{JMS89}~\cite{JS90}) , IV)
% cf.~\cite{ABBO00},~\cite[Ch.~4]{CM14}.

This however is a highly nontrivial task because systems lack many properties
which are essential parts of the theory developed for the scalar case. These
include the maximum principle, Gaussian estimates, \emph{a priori} H\"older
estimates, or contractivity properties for the corresponding semigroups in,
say, $L^p(\Omega)$; we refer to~\cite{MK86,KM83,KM99}, to~\cite[Ch.~6
Notes]{Ouhabaz09} and the references therein, to~\cite{Str84,JMS89,JS90}), and
to~\cite{ABBO00} and~\cite[Ch.~4]{CM14}.

The absence of these fundamental tools shows that one must develop new
ideas and instruments to obtain existence, uniqueness and regularity
for parabolic systems, in particular in the context of nonsmooth
data. In this paper, we attempt to do so for a geometric setting where
the spatial domain $\Omega$ may even fail to be Lipschitz, and where
the Dirichlet boundary parts $D_i$ are only required to be
Ahlfors--David regular. (The same setting was assumed in the preceding
works~\cite{HJKR15,ABHR14,Egert2018} and~\cite{DtER17}; see
Assumption~\ref{assu-general} below for details.)
For~\eqref{eq:mainEqNonAuto}, only \emph{boundedness} is supposed for
the coefficients in time and space, whereas we also impose a
continuity at zero time condition for particular cases for the
quasilinear problem~\eqref{eq:mainEqQL}. The starting point is a
classical wellposedness result for parabolic systems with nonsmooth
data: the ``parabolic Lax-Milgram lemma'' of
Lions~\cite[Sect.~XVIII.§3]{DL92} which states nonautonomous
$L^2$-maximal regularity for the linear
problem~\eqref{eq:mainEqNonAuto} in the Hilbert space case, so $r=q=2$
in~\eqref{eq:max-reg-reg}, under minimal assumptions on the data,
assuming a G{\aa}rding inequality for ellipticity. Unfortunately, the
Hilbert space setting is often too restrictive for nonlinear
problems. Thus, based on the main result in~\cite{DtER17}, we use the
\emph{Sneiberg extrapolation theorem} to extrapolate this
nonautonomous maximal regularity property to
$L^r(0,T;\bmW^{-1,q}_\fD(\Omega))$ for pairs $(r,q) \neq (2,2)$. (Note
that $(r,q)$ will in general still be close to $(2,2)$). This is
Theorem~\ref{t-extrapolatsystem} below. The extrapolation technique is
quite abstract but yields a nontrivial result for which we do not know
any other kind of proof in a nonsmooth setting; in fact, we could also
say that the abstract reasoning allows to use the formidable theory
developed for Sobolev spaces incorporating mixed boundary conditions
in a nonsmooth setting in a crucial way. Our technique also admits
explicit bounds on the inverse parabolic operators which are uniform
in the given coefficient data. These are inherited from the Hilbert
space case. % We complement the theoretical results by a brief
% consideration of the Stokes system where the viscosity parameter depends on
% time, but not necessarily in a continuous way. (See Section~\ref{extraStokes}.)

For $r,q > 2$, the seemingly little gain of regularity compared to the
Hilbert space case $r=q=2$, as in Lions' theorem, is interesting in
itself, but will be particularly useful in the treatment of nonlinear
problems, in particular for space dimension $d=2$. As a simple but
illuminating example, we mention that if $\bmu = (u_1,\dots,u_m)$ is
in, say, $L^\infty(0,T;\bmL^q(\Omega))$---this will in fact be the
case in the maximal regularity context for $r>2$, see
Corollary~\ref{c-vect-Lq}---, then any quadratic function $u_iu_j$ of
components of $\bmu$ will be in
$L^\infty(0,T;\bmW^{-1,q}_\fD(\Omega))$ if and only if $q > d$; so,
one is enabled to include such quadratic nonlinearities in the
$\bmW^{-1,q}_\fD(\Omega)$-framework for $d=2$ exactly when $q>2$.

It is further well known that maximal regularity results for
nonautonomous problems are essential for their quasilinear
counterparts since they give rise to an invertible linearization. We
base our considerations for the quasilinear system~\eqref{eq:mainEqQL}
on a fundamental and very general theorem by Amann~\cite{Ama05b}, see
Theorem~\ref{t-Amann} below. The main assumption there is
nonautonomous maximal parabolic regularity for the system operators
involved, with \emph{constant domains}, which in this context means
that the spatial differential operators $\bmL(\bmu)_t$ must have a
common domain $\bmW^{1,q}_\fD(\Omega)$ in the ambient space
$\bmW^{-1,q}_\fD(\Omega)$ for every time $t \in (0,T)$ and every
function $\bmu$ in the maximal regularity space. Usually, establishing
this property is the fundamental difficulty when applying Amann's
theorem for $q \neq 2$ and there seems to be little hope to emulate the
arguments for the scalar case due to the aforementioned lack of essential tools
in the systems case. However, in the present work, constant domains
are a \emph{built-in feature} of our extrapolation technique. This
allows us to state very general wellposedness results for quasilinear
systems as in~\eqref{eq:mainEqQL} with nonsmooth data,
Theorems~\ref{t-quasilin},~\ref{thm:quasilinear-2}
and~\ref{t-quasilin-global}. The last of these is a global-in-time
wellposedness result building upon the uniform bounds for maximal
parabolic regularity obtained in Theorem~\ref{t-extrapolatsystem}
earlier. In fact, since Amann's theorem (Theorem~\ref{t-Amann}) even
allows for \emph{nonlocal-in-time} dependencies on the state in the
nonlinearities, it is even possible to transform systems which are
formally \emph{non-coercive} into such which are tractable with our
theory. This is explained in Section~\ref{Snonaut6}, and we sketch a 
chemotaxis model application problem in Section~\ref{sec:example}.

\subsubsection*{Outline}

The paper is structured as follows. We first collect several
definitions and prerequisites such as Sneiberg's extrapolation
theorem, fundamental properties of elliptic (system)) operators
in divergence-gradient form, and some results from~\cite{DtER17}
in Section~\ref{sec:prelim}. Basing on these, we establish the
extension of Lions' theorem for the nonautonomous system
problem~\eqref{eq:mainEqNonAuto}, which is the first main
result, in Section~\ref{Snonaut4}. % These considerations are
% complemented by considering the Stokes system with a
% time-dependent viscosity parameter as an application; this is in
% Section~\ref{extraStokes}.
The results on nonautonomous maximal
regularity are then used in Section~\ref{Snonaut6} to give
several wellposedness results for the quasilinear
system~\eqref{eq:mainEqQL} basing on Amann's theorem. We
moreover explain how to deal with possibly non-coercive system
operators in this context and on the example of a chemotaxis
application in Section~\ref{sec:example}.

We expect that our notation is standard, otherwise it will be introduced during
the main text. All Banach spaces considered will be complex ones if not
indicated otherwise, and we fix the space dimension $d \geq 2$ and the number
$m \geq 1$ of unknowns and equations in the parabolic systems to be
considered. As a rule of thumb which was already used in the foregoing
introduction, vector-valued objects or operators acting on such will be denoted
in boldface, although we will not be extremely strict with this. This also
includes function spaces such as $\bmL^q(\Omega) \define
L^q(\Omega)^m$. Moreover, for a vector-valued function $\bmu = (u_1,\dots,u_m)$
we consider the gradient $\nabla \bm{u}$ as the aggregation of the individual
gradients into a $\C^{md}$ vector. The standard $\C^{md}$-norm
$|\nabla \bmu(x)|$ is then the same as the Frobenius norm of the
$\C^{m \times d}$-Jacobian $\bm u'(x)$ or its transpose; we mention this
because the latter objects are sometimes used in the literature.

\section{Preliminaries, elliptic operators and systems}\label{sec:prelim}

We collect some preliminary results and definitions for later use. The
first is a highly useful abstract extrapolation result for
continuously invertible operators between complex interpolation
scales,
%\subsection{Sneiberg's extrapolation principle revisited} \label{sec:sneiberg}
Sneiberg's famous extrapolation principle. The theorem was first
established in~\cite{S74} and elaborated in more detail
in~\cite{VV88}, but the explicit quantitative estimates as quoted here
were worked out only recently; we refer
to~\cite[Appendix]{Auscher2019}.

\begin{theorem}[Explicit quantitative Sneiberg] \label{t-sneib} Let
  $(X_1, X_2)$ and
  $(Y_1, Y_2)$ be interpolation couples of Banach spaces.  Let further
  $T \in \cL(X_1 \to Y_1) \cap \cL(X_2 \to Y_2)$ and put
  \begin{equation*}
    \gamma\define \|T\|_{X_1 \to Y_1} \vee \|T\|_{X_2 \to
      Y_2}.
  \end{equation*}
  Suppose that for a given $\vartheta \in(0,1)$ the operator
  $T \colon [X_1,X_2]_{\vartheta} \to [Y_1,Y_2]_{\vartheta} $ is a topological
  isomorphism and let
  $\beta = \|T^{-1}\|_{[X_1,X_2]_{\vartheta} \to [Y_1,Y_2]_{\vartheta}}$.
  % Then one has the following.
  % \begin{enumerate}[(i)]
  % \item \label{p-sneib-1} If $\theta \in(0,1)$ and
  %   \begin{equation*}
  %     \bigl|\vartheta-\theta\bigr| < \frac{\min(\vartheta,
  %       1-\vartheta)}{1+\beta \gamma} ,
  %   \end{equation*}
  %   then $\cR \colon [F_1,F_2]_{\theta} \to [Z_1,Z_2]_{\theta} $
  %   remains surjective.
  % \item \label{p-sneib-2}
    Then $T \colon [X_1,X_2]_{\theta} \to [Y_1,Y_2]_{\theta}$ is still
    a topological isomorphism for all $\theta \in(0,1)$ satisfying
    \begin{equation*}
      \bigl|\vartheta-\theta\bigr| \leq \frac{1}{6}\,\frac{
        % \min(\vartheta, 1-\vartheta)
        \vartheta \wedge (1-\vartheta)
      }{1 +2 \beta \gamma},
    \end{equation*}
    and the norm of its inverse is bounded by $ 8 \beta$.
    % \begin{equation*}
    %   \bigl\|\cR^{-1} \bigr\|_{[Z_1,Z_2]_{\theta} \to
    %     [F_1,F_2]_{\theta}} \le 8 \beta .
    % \end{equation*}
  %\end{enumerate}
\end{theorem}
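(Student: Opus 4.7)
The plan is to leverage Calderón's complex interpolation method and set up a geometric (Neumann-series style) iteration around the known inverse at $\vartheta$, exploiting the conformal geometry of the strip $S \define \{z \in \C : 0 < \Re z < 1\}$. Recall that each $y \in [Y_1,Y_2]_\theta$ can be written as $f(\theta)$ for some $f$ in the Calderón space $\cF(Y_1,Y_2)$ of bounded continuous functions $\bar S \to Y_1+Y_2$, analytic in $S$ and with $f(j-1+i\tau) \in Y_j$ for $j=1,2$, and that $\|y\|_{[Y_1,Y_2]_\theta}$ equals the infimum of $\|f\|_{\cF} \define \max_j \sup_\tau \|f(j-1+i\tau)\|_{Y_j}$ over such $f$. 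I also fix once and for all a conformal map $\varphi \colon S \to \field{D}$ with $\varphi(\vartheta)=0$; this satisfies $|\varphi|=1$ on $\partial S$ and $|\varphi|<1$ inside.

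Given $y \in [Y_1,Y_2]_\theta$, I would pick an almost-optimal extension $f_0 \in \cF(Y_1,Y_2)$ of $y$ and then construct $(g_k, f_k)_{k\ge 0}$ inductively as follows: using invertibility of $T$ at $\vartheta$, lift $T^{-1}f_k(\vartheta) \in [X_1,X_2]_\vartheta$ to some $g_k \in \cF(X_1,X_2)$ with $g_k(\vartheta) = T^{-1}f_k(\vartheta)$ and $\|g_k\|_{\cF} \le (1+\eps)\beta \|f_k\|_{\cF}$, and then set $f_{k+1} \define (f_k - Tg_k)/\varphi$. This lies in $\cF(Y_1,Y_2)$ because the simple zero of $\varphi$ at $\vartheta$ cancels the zero of $f_k - Tg_k$, and division by $\varphi$ preserves boundary norms since $|\varphi|=1$ on $\partial S$. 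A standard three-line estimate yields $\|f_{k+1}\|_{\cF} \le A \|f_k\|_{\cF}$ with $A \define 1 + (1+\eps)\beta\gamma$, hence $\|g_k\|_{\cF} \le (1+\eps)\beta A^k \|f_0\|_{\cF}$. The identity $f_k = Tg_k + \varphi f_{k+1}$ telescopes to
\begin{equation*}
  f_0(z) = T\Bigl(\sum_{k=0}^N \varphi(z)^k g_k(z)\Bigr) + \varphi(z)^{N+1} f_{N+1}(z),
\end{equation*}
and at $z = \theta$, the condition $|\varphi(\theta)| A < 1$ ensures that the series $\sum_k \varphi(\theta)^k g_k(\theta)$ converges in $[X_1,X_2]_\theta$ to some $x$ with $Tx = y$ and
\begin{equation*}
  \|x\|_{[X_1,X_2]_\theta} \le \frac{(1+\eps)^2 \beta}{1-|\varphi(\theta)|A}\,\|y\|_{[Y_1,Y_2]_\theta};
\end{equation*}
sending $\eps \to 0$ then yields surjectivity at $\theta$ with an explicit bound on the right inverse. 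Injectivity at $\theta$ follows either by running the same argument for the adjoint $T^*$ on the dual interpolation couples, or by an openness/continuity argument.

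What remains is to translate the interior condition $|\varphi(\theta)|(1+\beta\gamma) < 1$ into the Euclidean one stated in the theorem. The hyperbolic density on $S$ equals $\pi/\sin(\pi\Re z)$ times the Euclidean one, so for $\theta \in (0,1)$ near $\vartheta$ the Schwarz--Pick inequality gives $|\varphi(\theta)| \le \pi|\theta-\vartheta|/\sin(\pi\vartheta) \le c\,|\theta-\vartheta|/(\vartheta \wedge (1-\vartheta))$ with an explicit $c$, using $\sin(\pi x) \ge 2(x \wedge (1-x))$. Imposing $|\vartheta-\theta| \le \tfrac{1}{6}(\vartheta\wedge(1-\vartheta))/(1+2\beta\gamma)$ then forces $|\varphi(\theta)|A \le 7/8$ (after absorbing the auxiliary $\eps$ into the passage $1+\beta\gamma \leadsto 1+2\beta\gamma$), so the geometric-series bound above sharpens to $\|T^{-1}\|_{[Y_1,Y_2]_\theta \to [X_1,X_2]_\theta} \le 8\beta$. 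The main obstacle I expect is precisely this last constant-tracking step: the iteration is natural and robust, but reaching the exact prefactors $1/6$ and $8$ requires a careful optimization of the sharp Schwarz--Pick estimate, the slack parameter $\eps$, and the three-line bound simultaneously.
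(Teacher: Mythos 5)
First, a point of reference: the paper does not prove Theorem~\ref{t-sneib} at all --- it is quoted from the appendix of the cited work of Auscher et al., so there is no in-paper argument to compare against, and your proposal must stand on its own. Judged that way, your surjectivity argument is sound and is in fact the standard mechanism behind quantitative Sneiberg-type results: factor out the zero at $\vartheta$ with the strip Blaschke factor $\varphi$, iterate, and sum the geometric series. The membership of $(f_k-Tg_k)/\varphi$ in the Calder\'on space and the bound $\|f_{k+1}\|_{\cF}\le A\|f_k\|_{\cF}$ do need the maximum principle for $(Y_1+Y_2)$-valued analytic functions on the strip (not just the boundary identity $|\varphi|=1$), but that is routine. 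The Schwarz--Pick bookkeeping at the end is also not the obstacle you fear: under the stated restriction on $|\vartheta-\theta|$ one gets $|\varphi(\theta)|A$ well below $7/8$, so the constant $8\beta$ is comfortably reached for the \emph{right} inverse you construct.

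The genuine gap is injectivity, and with it the meaning of the claimed bound $8\beta$ on the two-sided inverse. Your iteration produces, for each $y\in[Y_1,Y_2]_\theta$, a preimage $x$ with a quantitative norm bound; this says nothing about $\ker\bigl(T|_{[X_1,X_2]_\theta}\bigr)$, and injectivity at $\vartheta$ does not transfer for free, since $[X_1,X_2]_\theta$ and $[X_1,X_2]_\vartheta$ are different subspaces of $X_1+X_2$. The duality route you propose is not available under the stated hypotheses: the identification $[X_1,X_2]_\theta^{*}=[X_1^{*},X_2^{*}]_\theta$ requires extra assumptions (a regular couple and reflexivity of an endpoint, by Calder\'on's duality theorem), and ``an openness/continuity argument'' is not an argument. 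The cited proof closes this by a separate \emph{lower-bound extrapolation lemma}: for $x$ in the dense subspace $X_1\cap X_2$ of $[X_1,X_2]_\theta$, pick a near-optimal $f\in\cF(X_1,X_2)$ with $f(\theta)=x$, apply $T$ pointwise to get $Tf\in\cF(Y_1,Y_2)$, and use the same Blaschke-factor device to compare $\|(Tf)(\theta)\|_{[Y_1,Y_2]_\theta}$ with $\|(Tf)(\vartheta)\|_{[Y_1,Y_2]_\vartheta}\ge\beta^{-1}\|f(\vartheta)\|_{[X_1,X_2]_\vartheta}$, and the latter with $\|x\|_{[X_1,X_2]_\theta}$. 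This yields $\|Tx\|\ge(8\beta)^{-1}\|x\|$ on the stated parameter range; it is here, not in the surjectivity step, that the prefactors $1/6$ and $8$ are actually determined. You need to add this lemma (or an equivalent direct injectivity argument) to obtain a topological isomorphism rather than merely a bounded right inverse.
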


% \section{Preliminaries: Elliptic regularity on
% $\bmW^{-1,q}_\fD$} \label{Snonaut3}
\subsection{Geometric framework and function
  spaces} \label{sec:ScaEllOp} The geometric setting, which we suppose
to be satisfied in all what follows, is as follows:
% , except for
% Section~\ref{extraStokes} on the Stokes problem, where more restrictive
% conditions corresponding to pure Dirichlet conditions are posed.

\begin{assumption}[Geometry] \label{assu-general} The set $\Omega$ is a
  nonempty bounded and connected open subset of $\R^d$, where $d \geq 2$.  Let
  $D_1,\dots,D_m$ be closed subsets of the boundary $\partial\Omega$
  to be understood as the Dirichlet boundary parts. Set
  $\fD \coloneqq \cap_{i=1}^m D_i$ and $\cN \coloneqq
  \overline{\partial \Omega \setminus \fD}$ and suppose the following:
  \begin{enumerate}%[(i)]
  \item[$(\cN)$] \label{assu-general:i} For every
    $x \in \cN$ there exists an
    open neighbourhood $U_x$ of $x$ in $\R^d$ and a bi-Lipschitz map
    $\zeta_x$ from $ U_x$ onto the cube $K \define  {]{-1},1[}^d$ such
    that $\zeta_x(x) = 0$ and
    %the following three conditions are satisfied:
    % \begin{align*}
    %   \zeta_x(x) &= 0,  \\
    %   \zeta_x( U_x \cap \Omega) &= \{ x \in K \colon x_d < 0 \} ,  \\
    %   \zeta_x(U_x \cap \partial \Omega) 
    %   &= \{ x \in K \colon x_d = 0 \}.
    % \end{align*}
    \begin{equation*}
%            \zeta_x(x) &= 0,  \qquad 
      \zeta_x( U_x \cap \Omega) = \bigl\{ x \in K \colon x_d < 0 \bigr\} ,  \qquad
      \zeta_x(U_x \cap \partial \Omega) 
      = \bigl\{ x \in K \colon x_d = 0 \bigr\}.
    \end{equation*}
  \item[$(\fD)$] \label{assu-general:ii} Each $D_i$ is either empty or it satisfies the
    \emph{Ahlfors--David condition}, that is,%  there are $c_0, c_1 > 0$
    % and $r_0 > 0$ such that
    \begin{equation} \label{eq:ahlf} % c_0 \, r^{d-1} \le \cH_{d-1}
      % \bigl(D_i \cap B(x,r) \bigr) \le c_1 \, r^{d-1}
      \cH_{d-1}
      \bigl(D_i \cap B(x,r) \bigr) \simeq r^{d-1} 
    \end{equation}
    uniformly for all $x \in D_i$ and $r < 1$, where
    $\mathcal H_{d-1}$ denotes the $(d-1)$-dimensional Hausdorff
    measure and $B(x,r)$ denotes the ball with center $x$ and radius
    $r$.
  \item[$(\Omega)$] The domain $\Omega$ satisfies a measure density condition,
    that is, $|\Omega \cap B(x,r)| \simeq r^d$ uniformly for all $x \in
    \Omega$ and $r<1$.
  \end{enumerate}
\end{assumption}

In the case of $\fD = \emptyset$,
Assumption~\ref{assu-general}~$(\cN)$ \emph{a fortiori} requires that
$\Omega$ is a (weak) Lipschitz domain, i.e., a Lipschitz manifold
where we have bi-Lipschitz boundary charts locally for the whole
boundary. In the other extreme case $\fD = \partial\Omega$, where each
$D_i =\partial \Omega$, we do not require boundary charts at all, but
merely that $\partial \Omega$ is $(d-1)$-set in the sense of
Jonsson\&Wallin, cf.~\cite[Chapter~II]{Jonsson84}, which is another
notion for condition~\eqref{eq:ahlf} in~$(\fD)$. In the same way, the
measure density condition in condition~$(\Omega)$ means that $\Omega$
is a $d$-set.

\begin{remark}
  \label{rem:geometry}
  This geometric setup that we rely on by means of
  Assumption~\ref{assu-general} is the one
  of~\cite{ABHR14,Egert2018,DtER17}, it gives us a Sobolev extension
  operator and maximal parabolic regularity for the negative Laplacian
  on negative Sobolev spaces by means of its square root. In fact, the
  recent works~\cite{Bechtel2024,Bechtel2020,BBRHT19} of Bechtel,
  Egert and collaborators have generalized the admissible geometric
  setting for this immensely and we could quite directly transfer our
  results to their setting, \emph{however} at the price of not being
  able to consider different Dirichlet boundary parts $D_i$ for each
  component $u_i$ of $\bmu$, cf.~\cite[][Rem.~4.5]{Bechtel2024}. Since
  the less general geometric framework of
  Assumption~\ref{assu-general} should still easily be enough for
  nearly all practical applications of our results, we have decided to
  stick with this one and rather consider possibly different Dirichlet
  boundary parts for each component.
\end{remark}

% A few comments on the foregoing assumption are in order.

% \begin{remark} \label{r-surfmeas} \mbox{}
%   \begin{enumerate}[(i)]
%   \item \label{r-surfmeas-3} We emphasize that the cases
%     $\fD = \partial \Omega$ or $\fD = \emptyset$ are allowed.
%   \item \label{r-surfmeas-1} Condition~\eqref{eq:ahlf} means that
%     $\fD$ is a $(d-1)$-set in the sense of
%     Jonsson\&Wallin, cf.~\cite[Chapter~II]{Jonsson84}.
%   \item \label{r-surfmeas:ii} On the set
%     $\partial \Omega \cap \bigl( \bigcup_{x \in \overline{\partial
%         \Omega \setminus \fD}} \,  U_x \bigr)$ the measure
%     $\cH_{d-1}$ is equal to the surface measure $\sigma$, which
%     can be constructed using the bi-Lipschitz charts $\phi_x$ around
%     these boundary points, see~\cite[Section~3.3.4~C]{Evans92}
%     or~\cite[Section~3]{HR10}.  In particular,~\eqref{eq:ahlf} assures
%     that
%     $\sigma \bigl(\fD \cap \bigl( \bigcup_{x \in \overline{\partial
%         \Omega \setminus \fD}}\, U_x \bigr) \bigr) > 0$ if
%     $\fD \neq \emptyset$ and $\fD \neq \partial \Omega$.
%   \end{enumerate}
% \end{remark}

\begin{definition}[Sobolev spaces] \label{def:sobo-spaces} For $D \subseteq
  \partial\Omega$ closed and for all $q \in [1,\infty)$ we define
  the Sobolev space $W^{1,q}_D(\Omega)$ by
  \begin{equation*}
    % C^\infty_D(\Omega) \define
    W^{1,q}_D(\Omega) \define \overline{\Bigl\{ \psi|_\Omega \colon \psi \in
      C_c^\infty(\R^d) \text{ and } \supp(\psi) \cap D = \emptyset
      \Bigr\}}^{\|\cdot\|_{W^{1,q}(\Omega)}},
  \end{equation*}
  as a subspace of $W^{1,q}(\Omega)$, where the norm is given by
  $\psi \mapsto \bigl( \int_\Omega |\nabla \psi|^q + |\psi|^q
  \bigr)^{1/q}$.
  \begin{enumerate}[(i)]
  \item If $q \in(1,\infty)$, then the anti-dual of
    $W^{1,q}_D(\Omega)$ will be denoted by $W^{-1,q'}_D(\Omega)$,
    where $1/q + 1/q' = 1$, that is, $W^{-1,q}_D(\Omega)$ is the space
    of continuous \emph{anti}linear functionals on
    $W^{1,q'}_D(\Omega)$, extending the $L^2$-inner
    product. % Here, ``dual'' is to be understood with
    % respect to the extended $L^2$-inner product.
    % We denote this dual
    % pairing by the generic
    % \begin{equation*}
    %   W^{-1,q'}_\fD(\Omega) \times W^{1,q}_\fD(\Omega) \ni (\psi,\varphi) \mapsto \langle
    %   \psi,\varphi \rangle \in \C. 
    % \end{equation*}
  \item For $m \geq 1$ a fixed integer, set
    $\bmW^{1,q}_\fD(\Omega) \coloneqq \prod_{i=1}^m W^{1,q}_{D_i}(\Omega)$ with
    \begin{equation*}
      \|\bmu\|_{\bmW^{1,q}_\fD(\Omega)} \define \Bigl(\sum_{i=1}^m
      \|u_i\|_{W^{1,q}_{D_i}(\Omega)}^q\Bigr)^{\frac1q} \qquad (\bmu =
      (u_1,\dots,u_m)),
    \end{equation*}
    and let $\bmW^{-1,q}_\fD(\Omega)$ be the anti-dual of $\bmW^{1,q'}_\fD(\Omega)$.
  \end{enumerate}
\end{definition}

\begin{remark} \label{r-fortsetz} In the geometric setting of
  Assumption~\ref{assu-general}, there exists a continuous linear
  extension operator
  $\bm{E} \colon \bmW^{1,q}_{\fD}(\Omega) \to \bmW^{1,q}(\R^d)$ which
  is universal in $q \in [1,\infty)$, see~\cite[Lemma~3.2]{ABHR14},
  and which simultaneously also extends
  $\bmL^q(\Omega) \to \bmL^q(\R^d)$. This enables the usual Sobolev
  embeddings and Rellich-Kondrachev type compactness properties for
  the Sobolev spaces, although we will not make explicit use of them.
\end{remark}

\begin{remark} \label{rem:neg-spaces} The negative order Sobolev spaces
  turn out to be well suited for the treatment of elliptic and
  parabolic problems if inhomogeneous Neumann boundary conditions are
  present (see e.g.~\cite[Sect.~3.3]{Lio68}), and also if right-hand
  sides of distributional type, e.g.\ surface densities,
  appear~\cite{HR09,HR11}.  In fact, there is often an intrinsic
  connection between (spatial) jumps in the coefficient function and
  the appearance of surface densities as parts of the right hand
  side~\cite[Chapter~1]{Tam79}.  Interestingly, negative Sobolev
  spaces are also adequate for the treatment of control problems
  subject to the previously mentioned problems and have attracted
  significant interest in this area, cf.\
  e.g.~\cite{KPV14,CCK13,KR13,HMRR10,MMR17a,MMR17b,Hoppe2022}.
\end{remark}

\begin{remark}
  In the foregoing definition we have choosen $\bmW^{1,q}_\fD(\Omega)$
  as the product of the spaces $W^{1,q}_{D_i}(\Omega)$, all with the
  same integrability $q$. In principle, we could also have varied
  these with $i$ and put $\bm{q} \simeq (q_1,\dots,q_m)$. Such a
  modification would be straightforward to implement, but would result
  in quite some notational overhead which, we think, rather obfuscates
  the present work, so we have decided not to include it. Moreover, we
  suggest that in nearly all possible applications of the results
  below, one could diminish the $q_i$ to their common minimum and
  still have a working theory. 
\end{remark}

Under Assumption~\ref{assu-general}, we obtain that the spaces
$\bmW^{-1,q}_\fD(\Omega)$ and $\bmW^{1,q}_\fD(\Omega)$ in fact form
complex interpolation scales with respect to $q$. This is proven
in~\cite[Thm.~3.3, Cor.~3.4]{HJKR15}---see
also~\cite{BechtelEgert2019}---for the scalar-valued case and can be
extended to the vector-valued case via component-wise interpolation~\cite[Prop.~I.2.3.3]{Amann95}.

\begin{lemma} \label{l-vectorint} Let
  $q_1,q_2 \in(1,\infty)$ and
  $\theta \in(0,1)$. Then, for
  $\frac{1}{q}=\frac{1-\theta}{q_1} +\frac{\theta}{q_2}$,
  \begin{equation} \label{eq:intedrpolvec}
    \Bigl[\bmW^{1,q_1}_\fD(\Omega),\bmW^{1,q_2}_\fD(\Omega)\Bigr]_\theta =\bmW^{1,q}_\fD(\Omega)
     \text{~~and~~} \Bigl[\bmW^{-1,q_1}_\fD(\Omega),\bmW^{-1,q_2}_\fD(\Omega)\Bigr]_\theta
    =\bmW^{-1,q}_\fD(\Omega).
  \end{equation}
\end{lemma}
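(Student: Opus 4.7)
The plan is to reduce the vector-valued statement to the scalar one, which is already available under Assumption~\ref{assu-general}, and then to invoke a general compatibility between complex interpolation and finite Cartesian products.

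First, I would quote the scalar results: for each fixed index $i \in \{1,\dots,m\}$, the identities
\begin{equation*}
  \bigl[W^{1,q_1}_{D_i}(\Omega), W^{1,q_2}_{D_i}(\Omega)\bigr]_\theta = W^{1,q}_{D_i}(\Omega)
\end{equation*}
and its negative-order counterpart
\begin{equation*}
  \bigl[W^{-1,q_1}_{D_i}(\Omega), W^{-1,q_2}_{D_i}(\Omega)\bigr]_\theta = W^{-1,q}_{D_i}(\Omega)
\end{equation*}
hold with equivalent norms by~\cite[Thm.~3.3, Cor.~3.4]{HJKR15} (see also~\cite{BechtelEgert2019}). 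The analytical heart of these scalar identities is the $q$-universal extension operator $\bm E$ recorded in Remark~\ref{r-fortsetz}, which via a retraction/coretraction argument transfers the interpolation problem from the closed subspace $W^{1,q}_{D_i}(\Omega) \subset W^{1,q}(\Omega)$ to classical interpolation on the whole space $\R^d$.

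Second, by Definition~\ref{def:sobo-spaces} one has $\bmW^{1,q}_\fD(\Omega) = \prod_{i=1}^m W^{1,q}_{D_i}(\Omega)$, and similarly for the negative-order spaces. I would therefore apply~\cite[Prop.~I.2.3.3]{Amann95}, which states that the complex interpolation functor commutes with finite Cartesian products:
\begin{equation*}
  \Bigl[\prod_{i=1}^m X_0^i,\, \prod_{i=1}^m X_1^i\Bigr]_\theta = \prod_{i=1}^m \bigl[X_0^i, X_1^i\bigr]_\theta
\end{equation*}
with equivalent norms, for any interpolation couples $(X_0^i, X_1^i)$. Taking $X_k^i = W^{1,q_k}_{D_i}(\Omega)$ and combining with the first scalar identity yields the first claim of the lemma; taking $X_k^i = W^{-1,q_k}_{D_i}(\Omega)$ and combining with the second scalar identity yields the second claim. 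Alternatively, once the positive-order identity is established, the negative-order one can be deduced by duality for complex interpolation, since the spaces involved are reflexive for $q \in (1,\infty)$.

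I do not anticipate a genuine obstacle; the only point that requires a brief comment is a norm-compatibility issue. Definition~\ref{def:sobo-spaces} equips $\bmW^{1,q}_\fD(\Omega)$ with the $\ell^q$-aggregation of the scalar component norms, which varies with $q$, whereas the product norm implicit in~\cite[Prop.~I.2.3.3]{Amann95} is a fixed $\ell^p$-type norm on the finite index set $\{1,\dots,m\}$. Since all norms on $\C^m$ are equivalent, the identifications above hold in the topological sense with equivalent norms, which is exactly the level of equality one expects of an interpolation identity. The nontrivial analytical content—accommodating the nonsmooth boundary and the mixed Dirichlet condition on each component—is entirely contained in the scalar result imported from~\cite{HJKR15}.
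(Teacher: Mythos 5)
Your proposal matches the paper's argument exactly: the paper derives the lemma by citing the scalar interpolation identities from~\cite[Thm.~3.3, Cor.~3.4]{HJKR15} (see also~\cite{BechtelEgert2019}) and then extending to the vector-valued case via component-wise interpolation using~\cite[Prop.~I.2.3.3]{Amann95}. Your additional remarks on the extension operator, the duality alternative, and the $\ell^q$- versus fixed-product-norm equivalence are correct but not needed beyond what the paper records.
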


% \begin{lemma} \label{l-sobo-interpolation-scale} Let
%   $q_1,q_2 \in(1,\infty)$ and $\theta \in(0,1)$.
%   Then for $i=1,\dots,m$ one has the following complex interpolation identities:
%   \begin{equation} \label{eq:intedrpol}
%     \Bigl[W^{1,q_1}_{D_i},W^{1,q_2}_{D_i}\Bigr]_\theta = W^{1,q}_{D_i} \quad
%     \text{and} \quad \Bigl[W^{-1,q_1}_{D_i},W^{-1,q_2}_{D_i}\Bigr]_\theta
%     = W^{-1,q}_{D_i},
%   \end{equation}
%   where $\frac{1}{q}=\frac{1-\theta}{q_1} +\frac{\theta}{q_2}$.
% \end{lemma}

% With  we
% immediately extend the foregoing lemma:

% \begin{corollary} \label{c-vectorint} Let
%   $q_1,q_2 \in(1,\infty)$ and $\theta \in(0,1)$. Then
%   \begin{equation} \label{eq:intedrpolvec}
%     \Bigl[\bmW^{1,q_1}_\fD,\bmW^{1,q_2}_\fD\Bigr]_\theta =\bmW^{1,q}_\fD
%     \quad \text{and} \quad \Bigl[\bmW^{-1,q_1}_\fD,\bmW^{-1,q_2}_\fD\Bigr]_\theta
%     =\bmW^{-1,q}_\fD
%   \end{equation}
%   for $\frac{1}{q}=\frac{1-\theta}{q_1} +\frac{\theta}{q_2}$.
% \end{corollary}

\subsection{Elliptic systems}

We turn to basic notions concerning elliptic systems of size $m$ in $d$
dimensions. Recall the definition of the differential operator $\bmL$ in~\eqref{eq:mainEqStrong}.
\begin{definition}[Coefficient tensor]
  Let $m \in \N$ with $m \geq 1$ be fixed. A \emph{coefficient tensor} $\tA$ consists
  of a collection of bounded and measurable functions
  \begin{equation*}
    A^{ij} \colon \Omega \to \C^{d
      \times d} \qquad b^{ij}, c^{ji}  \colon \Omega \to  \C^d, \qquad d^{ij}
    \colon \Omega \to  \C  \qquad (1 \leq i,j\leq m)
  \end{equation*}
  % \begin{equation*}
  %   A^{ij}, b^{ij}, c^{ij}, d^{ij} \colon \Omega \to \C^{d
  %   \times d}  \times  \C^d \times \C^d \times \C  \qquad (1 \leq i,j\leq m)
  % \end{equation*}
  which we assume to be arrayed in a matrix form according to their $(i,j)$ indices,
  \begin{equation*}
    \tA =
    \begin{bmatrix}
      d & c^\top \\ b & A
    \end{bmatrix} \colon \Omega \to \C^{(m+md) \times (m+md)} \simeq \cL(\C^{m(d+1)}).
  \end{equation*}
  Moreover, let
  $\|\tA\|\coloneqq
  \esssup_{x\in\Omega}\|\tA(x)\|_{\cL(\C^{m(d+1)})}$.
  % $\fA(x)$ is an $(m\times m)$ block matrix with entries
  % $\fA^{ij}(x) \in \cL(\C^{d})$ for $i,j = 1,\dots,m$ such that
  % \begin{equation*}
  %   \Omega \ni x \mapsto \fA^{ij}(x) \in \cL(\C^d)%^{m \times
  %   % m}% \qquad (x \in \Omega), 
  % \end{equation*}
  % is a bounded measurable mapping. We give $\fA$ the Frobenius-type norm
  % \begin{equation*}
  %   \|\fA\| \define
  %   \max_{i,j=1\dots,m}\bigl\|\fA^{ij}\bigr\|_{L^\infty(\Omega;\cL(\C^{d}))}. 
  % \end{equation*}
  \label{def:coeff-tensor}
\end{definition}

That being said, we
introduce the system operator as follows:

\begin{definition}[System operator and form] \label{d-systopera}
  Let $\tA$ be a coefficient tensor. Then,  for
  $q \in(1,\infty)$ we define the system operator $\bmL
    \colon \bmW^{1,q}_\fD(\Omega) \to \bmW^{-1,q}_\fD(\Omega)$ by
  \begin{equation}\label{eq:defnsystem} \bigl\langle \bmL \bmu, \bm{v} \bigr\rangle \define \int_\Omega    
    \tA
    \begin{bmatrix}
      \bmu \\ \nabla \bmu
    \end{bmatrix} \cdot \overline{\begin{bmatrix}
      \bm{v} \\ \nabla \bm{v}
    \end{bmatrix}} \qquad (\bmu \in \bmW^{1,q}_\fD(\Omega),~\bm{v}\in W^{1,q'}_\fD(\Omega)).
  \end{equation}%  and, for $\lambda \geq 0$, the corresponding form
  % $\fa_\lambda \colon \bmW^{1,2}_\fD(\Omega)  \times \bmW^{1,2}_\fD(\Omega) \to
  % \C$ by
  % \begin{equation*}
  %    \fa_\lambda(\bmu,\bm{v}) \define \bigl\langle L\bmu,\bm{v}\bigr\rangle
  %   + \lambda \int_\Omega
  %   \bmu\cdot\overline{\bm{v}} \qquad (\bmu,\bm{v} \in \bmW^{1,2}_\fD(\Omega)).
  % \end{equation*} 
\end{definition}

\begin{remark}
  Due to the presupposed boundedness of the coefficient tensor $\tA$ and
  H\"older's inequality it is clear that $\bmL$ as
  in~\eqref{eq:defnsystem} is indeed a well defined continuous and linear operator between
  $\bmW^{1,q}_\fD(\Omega)$ and $\bmW^{-1,q}_{\fD}(\Omega)$ whose norm is
  bounded by $\|\tA\|$. % Analogously, $\fa_\lambda$ is a continuous sesquilinear form on
  % $\bmW^{1,2}_\fD(\Omega)$ for every $\lambda \geq 0$.
  \label{rem:sys-op}
\end{remark}

We next turn to a notion of \emph{ellipticity} for the just defined
system operator. With the wording of \emph{weakly} and \emph{strongly}
elliptic, we follow~\cite[Ch.~3.4]{GM12}.

\begin{definition}[Weakly elliptic, G{\aa}rding's
  inequality] \label{def:elliptic} We say that the operator $\bmL$ satisfies
  \emph{G{\aa}rding's inequality} if, for some $\lambda \geq 0$, the quadratic form
  induced by $\bmL$ is coercive with constant $\gamma>0$ in the following sense:
  \begin{equation} \label{eq:coerc} \Re\blangle \bmL \bmu, \bmu\brangle +
   \lambda \int_\Omega |\bmu|^2 \geq \gamma\, \int_\Omega|\nabla \bmu|^2 \qquad (\bmu \in \bmW^{1,2}_\fD(\Omega)).
 \end{equation}  In this case, we also say that
 $\bmL$ is $\lambda$-\emph{weakly elliptic}. % and
 % $\fa_0$ is $\lambda$-\emph{weakly coercive}.
 If $\lambda = 0$ can be
 chosen, we say that $\bmL$ %and $\fa_0$ are
 is
 \emph{strongly} elliptic.
\end{definition}

\begin{remark} \label{r-suff} We consider the abstract ellipticity condition for
  $\bmL$ in the form of the G{\aa}rding inequality~\eqref{eq:coerc} adequate for
  our context because of its generality. However, the condition is quite
  nontrivial in the sense that it is not immediate at all to give direct (and
  not too strong) conditions on the coefficient tensor $\tA$ which are
  sufficient for~\eqref{eq:coerc}. This already concerns the principal
  part $-\nabla\cdot A\nabla \colon \bmW^{1,q}_\fD(\Omega) \to
  \bmW^{-1,q}_\fD(\Omega)$ of $\bmL$ given by
  \begin{equation}\label{eq:second-order-only}
    \blangle -\nabla\cdot A\nabla\bmu,\bm{v}\brangle
    \define \int_\Omega A\nabla \bmu \cdot \overline{\nabla \bm{v}}, \qquad
    (\bmu \in \bmW^{1,q}_\fD(\Omega),~\bm{v}\in \bmW^{1,q'}_\fD(\Omega)) 
  \end{equation}
  whose ellipticity is sufficient for weak ellipticity of
  the full operator.
  \begin{itemize}
  \item The strongest sufficient condition is the \emph{Legendre
      condition} which postulates that there exists $\gamma > 0$
    such that
    \begin{equation}\label{eq:legendre-bedingung}
      \Re \sum_{i,j=1}^m\fA^{ij}(x) \xi_j \cdot \overline{ \xi_i} \geq \gamma |\xi|_2^2 \qquad
      (\xi \in \C^{d \times m},~\text{almost all}~x\in\Omega),
    \end{equation}
    where $\xi_i$ denotes the $i$-th column of the matrix $\xi$
    and we recall $\fA^{ij}(x) \in \C^{d\times d}$ for
    $x \in \Omega$. It is immediate to verify that this
    condition is even sufficient for $-\nabla\cdot\fA\nabla$ to
    be strongly elliptic.
  \item The \emph{Legendre-Hadamard condition} is derived from
    the Legendre condition by considering only the special class
    of rank-one matrices $\xi = \eta \otimes \zeta$ with
    $\eta \in \R^d$ and $\zeta \in \C^m$
    in~\eqref{eq:legendre-bedingung}. If $\fD = \partial\Omega$
    and the coefficients $A$ are in fact constant, then
    the Legendre-Hadamard condition implies that
    $-\nabla\cdot\fA\nabla$ is strongly elliptic. If $\fA$ is
    uniformly continuous on $\overline\Omega$, then we still
    recover weak ellipticity~(\cite[Ch.~3.4]{GM12}).

    Unfortunately, if $D_i = \emptyset$ for $i=1,\dots,m$, then the Legendre
    Hadamard condition does in general not imply that $-\nabla\cdot\fA\nabla$
    is weakly elliptic, even for constant coefficient tensor $\fA$. (The system
    induced by the Cauchy-Riemann operator is a counterexample; see
    also~\cite{Zha89} or~\cite{Verchota2015}.) We refer
    to~\cite{Zha10,Serre2006} for more details and some positive results
    in this direction; note also that Korn's inequality is an interesting
    particular case~\cite{NP14}.  It seems unknown whether the situation
    improves for mixed boundary conditions where $D_i \neq \emptyset$ for all
    (or some) $i = 1,\dots,m$.

    Note however
    that \emph{if} $-\nabla\cdot\fA\nabla$ is strongly elliptic for
    $\fD = \partial\Omega$, then the Legendre-Hadamard condition
    is indeed satisfied even if $\fA$ is only measurable and
    essentially bounded. This follows from considering, for
    $\eta \in \R^d$ and $\xi \in \C^m$, a family of localized
    sawtooth type functions
    $\bmu_\eps \in \bmW^{1,2}_{\partial\Omega}(\Omega)$ given by
    \begin{equation*}
      \bmu_\eps(x) \define \eps \, \varphi(x) \, w(\eps^{-1} \eta\cdot x)\,
      \xi  \qquad (x \in \Omega,~\eps > 0),
    \end{equation*}
    where $\varphi\in C_c^\infty(\Omega)$ and $w$ is, say, a
    $2$-periodic sawtooth function on the real line induced by
    $t \mapsto 1-|t|$ on $[-1,1]$. Inserting these into the
    G{\aa}rding inequality~\eqref{eq:coerc} with $\lambda = 0$
    and letting $\eps \downarrow 0$ yields the Legendre-Hadamard
    condition.
  \end{itemize}
  We note that the cited works all refer to the (most important)
  case of a \emph{real} coefficient tensor $\tA$ and real
  function spaces. For real $\tA$, we find that
  $\langle \bmL \bmu,\bmu\rangle$ decomposes into
  \begin{equation*}
    \Re \blangle \bmL\bmu,\bmu\brangle = \blangle \bmL(\Re \bmu),\Re
    \bmu)\brangle + \blangle \bmL(\Im \bmu),\Im \bmu\brangle
  \end{equation*}
  with the real and imaginary parts $\Re \bmu$ and
  $\Im \bmu$ of $\bmu \in \bmW^{1,2}_\fD(\Omega)$, both of which
  are of course \emph{real} functions. Thus the theory presented
  in the mentioned references can directly be transferred to the present
  case of complex function spaces if $\tA$ is a real-valued coefficient tensor.
\end{remark}

\begin{remark}\label{r-stable}
  The G{\aa}rding inequality~\eqref{eq:coerc} is stable with respect to small perturbations in the
  following sense: if the operator $\bmL$ induced by the coefficient tensor $\tA$
  is $\lambda$-weakly elliptic with constant $\gamma$, and if $\tB$ is another
  coefficient tensor such that $\|\tB\| < \gamma/2$, then the operator induced by
  $\tA + \tB$ is still $(\lambda+\gamma/2)$-weakly elliptic, with constant $\gamma/2 > 0$.
\end{remark}

Of course, the abstract form of the G{\aa}rding
inequality~\eqref{eq:coerc} combines immediately with the Lax-Milgram
lemma: If $\bmL$ is $\lambda$-weakly elliptic, then
$\bmL + \Lambda \colon \bmW^{1,2}_\fD(\Omega) \to
\bmW^{-1,2}_\fD(\Omega)$ is a topological isomorphism for every
$\Lambda > \lambda$.  Moreover, the norm of the associated inverse is
bounded by $\min(\Lambda-\lambda,\gamma)^{-1}$ with the coercivity
constant $\gamma > 0$ as in~\eqref{eq:coerc}. In fact, we can extend
the isomorphism property to the non-Hilbert spaces in the Sobolev
scale using Sneiberg's extrapolation principle and keep a uniform
bound:

\begin{theorem} \label{t-systemregula} Let $\bmL$ be $\lambda$-weakly elliptic. Then
  for every $\Lambda > \lambda$ there
  is a number $\delta>0$ such that
  \begin{equation} \label{eq:toopsystem} \bmL+\Lambda\colon\bmW^{1,q}_\fD(\Omega) \to \bmW^{-1,q}_\fD(\Omega)
  \end{equation}
  is a topological isomorphism for every $q \in (1,\infty)$ such that
  $|q-2| < \delta$. The number $\delta>0$ can be chosen uniformly with
  respect to all coefficient tensors $\tA$ with a uniform bound on
  $\|\tA\|$ inducing $\lambda$-weakly elliptic system operators with
  the same coercivity constant $\gamma$.
\end{theorem}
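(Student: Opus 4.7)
The plan is to apply the quantitative Sneiberg theorem (Theorem~\ref{t-sneib}) to $\bmL+\Lambda$ viewed between suitable complex interpolation scales, anchored by the Hilbert space isomorphism at $q=2$ obtained from the Lax--Milgram lemma, and to use Lemma~\ref{l-vectorint} to identify the intermediate spaces as the Sobolev spaces we want.

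First, at the Hilbert space level, the G{\aa}rding inequality~\eqref{eq:coerc} combined with $\Lambda>\lambda$ gives, for every $\bmu\in\bmW^{1,2}_\fD(\Omega)$,
\begin{equation*}
  \Re\blangle(\bmL+\Lambda)\bmu,\bmu\brangle
  \geq \gamma\int_\Omega|\nabla\bmu|^2+(\Lambda-\lambda)\int_\Omega|\bmu|^2
  \geq \min(\gamma,\Lambda-\lambda)\,\|\bmu\|_{\bmW^{1,2}_\fD(\Omega)}^2 .
\end{equation*}
By the Lax--Milgram lemma, $\bmL+\Lambda\colon\bmW^{1,2}_\fD(\Omega)\to\bmW^{-1,2}_\fD(\Omega)$ is therefore a topological isomorphism with $\beta\define\|(\bmL+\Lambda)^{-1}\|\leq\min(\gamma,\Lambda-\lambda)^{-1}$.

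Second, I fix endpoints $q_1,q_2\in(1,\infty)$ with $q_1<2<q_2$. By Remark~\ref{rem:sys-op} the operator $\bmL\colon\bmW^{1,q_i}_\fD(\Omega)\to\bmW^{-1,q_i}_\fD(\Omega)$ has norm bounded by $\|\tA\|$, and the embedding $\Lambda\cdot\id\colon\bmW^{1,q_i}_\fD(\Omega)\to\bmW^{-1,q_i}_\fD(\Omega)$ is continuous with norm controlled by $\Lambda$ and the embedding constants on $\Omega$; set $\gamma_{\mathrm{sn}}\define\|\bmL+\Lambda\|_{\bmW^{1,q_1}_\fD\to\bmW^{-1,q_1}_\fD}\vee\|\bmL+\Lambda\|_{\bmW^{1,q_2}_\fD\to\bmW^{-1,q_2}_\fD}$. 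By Lemma~\ref{l-vectorint}, for $\vartheta\in(0,1)$ with $\tfrac{1}{2}=\tfrac{1-\vartheta}{q_1}+\tfrac{\vartheta}{q_2}$ one has $[\bmW^{\pm 1,q_1}_\fD(\Omega),\bmW^{\pm 1,q_2}_\fD(\Omega)]_\vartheta=\bmW^{\pm 1,2}_\fD(\Omega)$, so the Hilbert space isomorphism above is precisely the intermediate isomorphism required as input to Sneiberg.

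Theorem~\ref{t-sneib} then yields some $\eta>0$ satisfying $\eta\leq \tfrac{1}{6}\tfrac{\vartheta\wedge(1-\vartheta)}{1+2\beta\gamma_{\mathrm{sn}}}$ such that $\bmL+\Lambda$ remains a topological isomorphism between the intermediate spaces at any $\theta\in(0,1)$ with $|\theta-\vartheta|\leq\eta$, with inverse norm bounded by $8\beta$. Translating back via $\tfrac{1}{q}=\tfrac{1-\theta}{q_1}+\tfrac{\theta}{q_2}$ and Lemma~\ref{l-vectorint}, this produces a number $\delta>0$ such that~\eqref{eq:toopsystem} is a topological isomorphism whenever $|q-2|<\delta$.

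For the uniformity claim, I observe that $\beta$ depends only on $\Lambda-\lambda$ and $\gamma$, while $\gamma_{\mathrm{sn}}$ depends only on $\|\tA\|$, $\Lambda$, and the fixed endpoints $q_1,q_2$ (through the embedding constants for $\Omega$). Consequently, the Sneiberg threshold $\eta$ can be chosen in common for any family of coefficient tensors with a uniform bound on $\|\tA\|$ and a common coercivity constant $\gamma$, and the resulting $\delta$ inherits this uniformity. The only mildly delicate point is making sure that the endpoint norms $\gamma_{\mathrm{sn}}$ are controlled uniformly; this is automatic from Remark~\ref{rem:sys-op} since the principal ingredient is the pointwise bound $\|\tA\|$, which is assumed uniform.
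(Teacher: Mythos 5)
Your proposal is correct and follows essentially the same route as the paper: Lax--Milgram at $q=2$ with the explicit coercivity bound $\min(\gamma,\Lambda-\lambda)^{-1}$ on the inverse, the interpolation scale identification from Lemma~\ref{l-vectorint}, and the quantitative Sneiberg theorem to extrapolate, with uniformity of $\delta$ coming from the fact that both the endpoint norms and the inverse norm at $q=2$ are controlled by $\|\tA\|$, $\gamma$ and $\Lambda-\lambda$ alone. The additional detail you supply (fixing endpoints $q_1<2<q_2$ and translating the $\theta$-neighbourhood back to a $q$-neighbourhood) is exactly what the paper leaves implicit.
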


\begin{proof}
  It was already mentioned in Remark~\ref{rem:sys-op} that the
  system operator~\eqref{eq:toopsystem} with a coefficient
  tensor $\tA$ as defined in Definition~\ref{d-systopera} is
  continuous for every $q \in(1,\infty)$, and
  that the norms of the respective operators are uniformly
  bounded if $\|\tA\|$ is uniformly bounded.  Since
  $\bmL$ is $\lambda$-weakly elliptic, the
  Lax-Milgram lemma shows that~\eqref{eq:toopsystem} is
  continuously invertible for $q =2$ and the operator norm of
  the inverse
  % $\|(-\nabla \cdot \fA \nabla +\Lambda)^{-1} \|_{\bmW^{-1,2}_\fD \to
  %   \bmW^{1,2}_\fD}$
  is bounded by $\min(\Lambda-\lambda,\gamma)^{-1}$.  So the assertions
  follow from $(\bmW^{\pm1,q}_{\fD}(\Omega))_{1<q<\infty}$ being an
  interpolation scale by Lemma~\ref{l-vectorint} and Sneiberg's
  extrapolation principle Theorem~\ref{t-sneib}.
\end{proof}

We note that the foregoing result has been established already
in~\cite[Ch.~6]{HJKR15} by different means; we obtain it here as a
mere byproduct of the G{\aa}rding inquality assumption. One can
consider it as an optimal elliptic regularity result.

Finally, we fix a particular system operator which will serve as an
anchor for interpolation of maximal regularity spaces in the next
section; of course, we are talking about the Laplacian:

\begin{definition}[{Negative Laplacian}] \label{d-2} For 
  $q \in(1,\infty)$, define the continuous linear operator
  $-\bm{\Delta}_q\colon \bmW^{1,q}_\fD(\Omega) \to \bmW^{-1,q}_\fD(\Omega)$
   by
   \begin{equation} \label{eq:formdef0} \bigl\langle -\bm{\Delta}_q  \bmu, \bm{v} \bigr\rangle \define %\sum_{j=1}^m
     \int_\Omega
     %\Bigl[\nabla u_j \cdot \overline {\nabla v_j} +
     %u_j\overline{v_j}\Bigr]
     \nabla \bmu\cdot\overline{\nabla\bm{v}} % + \int_\Omega \bmu\cdot\bm{v}
     \qquad (\bmu \in
     W^{1,q}_\fD(\Omega),~\bm{v} \in W^{1,q'}_\fD(\Omega)).
  \end{equation}
\end{definition}

Clearly, $-\bm{\Delta}_q$ is the operator associated with the
coefficient tensor $\tA$ made up of $\fA^{ii} = \id_d$ with the
$(d\times d)$-identity matrix $\id_d$, and all other coefficients
zero. It is strongly elliptic, in particular,
Theorem~\ref{t-systemregula} applies for any $\Lambda > 0$ and we
follow up with the next definition (for $\Lambda = 1$):

\begin{definition}[Isomorphism index] \label{d-isoindex} We call a number
  $q \in(1,\infty)$ an \emph{isomorphism index} if
  \begin{equation*}
    -\bm{\Delta}_q + 1 \colon \bmW^{1,q}_{\fD}(\Omega) \to \bmW^{-1,q}_{\fD}(\Omega)
  \end{equation*}
  is a topological isomorphism. We denote by $\fI$ the set of
  isomorphism indices. Although $\fI$ in fact depends on the given
  geometry, we do not keep track of this explicitly since the geometry
  is supposed to be fixed by Assumption~\ref{assu-general}.
\end{definition}

\begin{proposition} \label{p-iso-indices-open} The set $\fI$ is an open interval which
  contains~2.
\end{proposition}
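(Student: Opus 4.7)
My plan is to verify three facts: that $2 \in \fI$, that $\fI$ is relatively open in $(1,\infty)$, and that $\fI$ is convex. A subset of $\R$ that is both convex and relatively open is an open interval, which yields the claim.

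The membership $2 \in \fI$ is immediate: the Laplacian coefficient tensor satisfies the Legendre condition~\eqref{eq:legendre-bedingung} with constant $\gamma = 1$, hence also G{\aa}rding's inequality with $\lambda = 0$, so the Lax-Milgram lemma gives that $-\bm{\Delta}_2 + 1\colon \bmW^{1,2}_\fD(\Omega) \to \bmW^{-1,2}_\fD(\Omega)$ is a topological isomorphism. For openness at an arbitrary $q_0 \in \fI$ I would simply repeat the Sneiberg argument of Theorem~\ref{t-systemregula}, but based at $q_0$ rather than at $2$: for any $q_1 < q_0 < q_2$ in $(1,\infty)$, the operator $-\bm{\Delta} + 1$ is continuous $\bmW^{1,q_i}_\fD(\Omega) \to \bmW^{-1,q_i}_\fD(\Omega)$ (Remark~\ref{rem:sys-op}), and by Lemma~\ref{l-vectorint} complex interpolation at the parameter $\vartheta$ with $1/q_0 = (1-\vartheta)/q_1 + \vartheta/q_2$ recovers the spaces $\bmW^{\pm 1,q_0}_\fD(\Omega)$. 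Since $q_0 \in \fI$ the operator is an isomorphism at $\vartheta$, so Theorem~\ref{t-sneib} provides a neighborhood of $\vartheta$, and hence of $q_0$, contained in $\fI$.

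For convexity, suppose $q_1 < q_2$ are both in $\fI$, with inverses $S_1, S_2$. Boundedness of $\Omega$ together with H\"older's inequality gives the continuous and dense inclusion $\bmW^{1,q_2}_\fD(\Omega) \hookrightarrow \bmW^{1,q_1}_\fD(\Omega)$, and dualization of the analogous inclusion $\bmW^{1,q_1'}_\fD(\Omega) \hookrightarrow \bmW^{1,q_2'}_\fD(\Omega)$ at the conjugate exponents produces the continuous embedding $\bmW^{-1,q_2}_\fD(\Omega) \hookrightarrow \bmW^{-1,q_1}_\fD(\Omega)$. Therefore, for every $f \in \bmW^{-1,q_2}_\fD(\Omega)$ both $S_1 f$ and $S_2 f$ live in $\bmW^{1,q_1}_\fD(\Omega)$ and solve $(-\bm{\Delta} + 1)u = f$ in $\bmW^{-1,q_1}_\fD(\Omega)$, and hence coincide by uniqueness at $q_1$. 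The single operator $S \coloneqq S_1$ thus maps $\bmW^{-1,q_1}_\fD(\Omega) \to \bmW^{1,q_1}_\fD(\Omega)$ boundedly and restricts to a bounded map $\bmW^{-1,q_2}_\fD(\Omega) \to \bmW^{1,q_2}_\fD(\Omega)$; complex interpolation together with Lemma~\ref{l-vectorint} then gives $S\colon \bmW^{-1,q}_\fD(\Omega) \to \bmW^{1,q}_\fD(\Omega)$ bounded at every intermediate $q$. That this $S$ is a two-sided inverse of $-\bm{\Delta}_q + 1$ follows by tracing through the form definition and using the injectivity of the Sobolev embeddings to transport the identities $S \circ (-\bm{\Delta} + 1) = \id$ and $(-\bm{\Delta} + 1) \circ S = \id$ from the endpoint space $\bmW^{\pm 1,q_1}_\fD(\Omega)$ back to the intermediate $\bmW^{\pm 1,q}_\fD(\Omega)$. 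This yields $[q_1, q_2] \subset \fI$.

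The main obstacle is the convexity step: Sneiberg handles openness almost mechanically once Lemma~\ref{l-vectorint} is in place, but setting up convexity requires verifying the compatibility $S_1|_{\bmW^{-1,q_2}_\fD(\Omega)} = S_2$ via a careful bookkeeping of the Sobolev inclusions and their duals, and the interpolated operator must be shown to be an \emph{actual} inverse of $-\bm{\Delta}_q + 1$ rather than merely a bounded right-parametrix. Both points are elementary once the correct direction of the embeddings is settled, but the sign of the embedding on the negative-order side is easy to get backwards.
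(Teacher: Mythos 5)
Your proof is correct and follows the same route as the paper: membership of $2$ via Lax--Milgram, openness via Sneiberg's theorem applied around an arbitrary $q_0\in\fI$ using the interpolation identities of Lemma~\ref{l-vectorint}, and the interval property via the interpolation-scale structure. You merely spell out in full the compatibility-of-inverses and two-sided-inverse bookkeeping that the paper's one-line connectedness claim leaves implicit, and you get the directions of the embeddings right.
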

\begin{proof}
  If follows from Lemma~\ref{l-vectorint} that $\fI$ is connected,
  since $\bmW^{1,q}_{\fD}(\Omega)$ and $\bmW^{-1,q}_{\fD}(\Omega)$
  form an interpolation scale with respect to $q$. Moreover,
  $2 \in \fI$ by the Lax--Milgram lemma or
  Theorem~\ref{t-systemregula}, as mentioned before.  Thus, the result
  follows again from Sneiberg's extrapolation principle as in
  Theorem~\ref{t-sneib}.
\end{proof}

We refer to~\cite[Examples~6.6--6.8]{DtER17} for several geometric
constellations for $\Omega$ and $D_i$ and associated isomorphism index
ranges $\fI$. A statement analogous to
Proposition~\ref{p-iso-indices-open} could also be made for the
general operators $\bmL + \Lambda$ as in Theorem~\ref{t-systemregula};
cf.\ also~\cite[Thm.~5.6, Rem.~5.7]{HJKR15}.

% \begin{remark}
%   If $D_i \neq \emptyset$ for every $i = 1,\dots,m$, then under the geometric
%   assumptions as in Assumption~\ref{assu-general}, we have a Poincar\'{e}
%   inequality at our disposal~(\cite[Rem.~3.4]{ABHR14}). In this case,
%   $\Lambda = \lambda$ is allowed in Theorem~\ref{t-systemregula}.
%   % , and the norm
%   % of the inverse elliptic operator
%   % $\bmW^{-1,2}_\fD(\Omega) \to \bmW^{1,2}_\fD(\Omega)$, the latter equipped
%   % with the gradient norm, is bounded by $\gamma^{-1}$ for $\Lambda = \lambda$.
%   \label{rem:poincare}
%\end{remark}

\section{Extrapolation of nonautonomous maximal parabolic
  regularity} \label{Snonaut4}

We next turn to another application of the Sneiberg extrapolation
principle, this one being crucial for the treatment of quasilinear
parabolic systems by our approach: nonautonomous maximal parabolic
regularity. It is based on the theory developed in~\cite{DtER17}.

\subsection{Basics on maximal parabolic regularity}

In all what follows, let $T > 0$ and set $J = (0,T)$. We consider an
abstract Banach space $X$ and, associated, another Banach space $D$
such that $D$ is densely and continuously embedded into $X$.

Let us start by introducing the following (standard) definition.

\begin{definition}[Bochner-Sobolev and maximal regularity
  spaces] \label{d-bochner}Let $r \in(1,\infty)$.
  \begin{enumerate}[(i)]
  \item We denote by $L^r(J;X)$ the space of $X$-valued functions $f$
    on $J$ which are Bochner-measurable and for which
    $t \mapsto \|f(t)\|_X \in L^r(J)$, with its natural norm.
  \item Further define
    \begin{equation*}
      W^{1,r}(J;X)\define \Bigl\{u \in L^r(J;X) \colon u' \in L^r(J;X)\Bigr\},
    \end{equation*}
    equipped with the sum norm. Here, $u'$ is to be understood as the
    (time) derivative of $u$ in the sense of $X$-valued distributions,
    cf.~\cite[Sect.~III.1]{Amann95}. 
  \item We write
    \begin{equation*}
      \bW^{1,r}(J;D,X) \define  L^r(J;D) \cap W^{1,r}(J;X)
    \end{equation*}
    for the space of maximal parabolic regularity, and we introduce the
    (closed) subspace, cf.\ Lemma~\ref{lem:extension},
    \begin{equation*}
      \bW^{1,r}_0(J;D,X)\define \Bigl\{ u \in \bW^{1,r}(J;D,X) \colon
      u(0)=0\Bigr\}.\end{equation*}% which we equip with the norm $v \mapsto \|v'\|_{L^r(J;X)}$.
    Equipped with the sum norms, both
    spaces are complete.
  \end{enumerate}
\end{definition}

In this paper we consider the following notion of maximal parabolic
regularity in the nonautonomous setting in the constant domain case.

\begin{definition}[Maximal parabolic regularity (nonautonomous)]
  \label{d-maxpar} Let $(\cA(t)_{t \in J}$ be an operator family of
  closed operators on $X$ with common domain $D$ such that
  $J \ni t \mapsto \cA(t) \in \cL(D \to X)$ is bounded and strongly
  measurable.  Then, for $r \in (1,\infty)$, we say that %the
  % family $(\cA(t))$
  $\cA$
  satisfies \emph{(nonautonomous) maximal parabolic
    $L^r(J;X)$-regularity}, if for any $f \in L^r(J;X)$ there is a
  unique function $u \in \bW^{1,r}(J;D,X)$ which satisfies
  \begin{equation} \label{eq:0paragleich} u'(t) +\cA(t)u(t)=f(t) \quad \text{in}~X,
    \qquad u(0) = 0,
  \end{equation}
  for almost all $t \in J$. % (We usually omit the latter.)
  Equivalently, the (bounded and linear) parabolic operator
  $\partial + \cA \colon \bW^{1,r}_0(J;D,X) \to L^r(J;X)$, defined
  by
  $\bigl[(\partial + \cA)u\bigr](t) \define u'(t) + \cA(t)u(t)$,
  is continuously invertible.
\end{definition}
\begin{remark} \label{r-invariantr} In the autonomous case of
  Definition~\ref{d-maxpar}, that is, if all operators $\cA(t)$ are in
  fact equal to one (fixed) operator $\cA_0$, and $\cA$ satisfies
  maximal parabolic $L^r(J;X)$-regularity for \emph{some}
  $r \in (1,\infty)$, then $\cA$ satisfies maximal parabolic
  $L^s(I;X)$-regularity for \emph{every} $s \in(1,\infty)$ and all
  (finite) intervals $I\subset \R$, and $-\cA_0$ is the generator of
  an analytic semigroup on $X$. These are classical results by
  Dore~\cite{Dor93}. Accordingly, we say that $\cA_0$ satisfies
  \emph{maximal parabolic regularity on $X$} in this case.
\end{remark}

Since we consider a \emph{finite} time interval $J$, the
property of maximal regularity is moreover invariant under
scalar shifts:

\begin{corollary}
  \label{cor:mpr-shift}
  In the setting of Definition~\ref{d-maxpar}, if the family
  $(\cA(t))_{t\in J}$ satisfies nonautonomous maximal parabolic
  $L^r(J;X)$-regularity, then so does $(\cA(t)+ \mu)_{t\in J}$ for any scalar
  $\mu\in\C$. Moreover, the operator norms of
  \begin{equation*}
    \bigl(\partial+\cA+\mu\bigl)^{-1} \colon L^r(J;X) \to \bW^{1,r}_0(J;D,X) 
  \end{equation*}
  are uniformly bounded in $|\mu|$ and the operator norm of $(\partial+\cA)^{-1}$.
\end{corollary}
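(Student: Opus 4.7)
My plan is to reduce the shifted problem to the unshifted one by an exponential transformation, exploiting that $J$ is finite. For $\mu \in \C$, introduce the pointwise multiplication operator $M_\mu \colon u \mapsto e^{\mu \cdot} u$. A direct chain-rule computation shows that $u$ solves $u'(t) + (\cA(t) + \mu) u(t) = f(t)$ with $u(0) = 0$ if and only if $v \define M_\mu u$ solves $v'(t) + \cA(t) v(t) = (M_\mu f)(t)$ with $v(0) = 0$: indeed the contribution produced by differentiating the factor $e^{\mu t}$ exactly cancels the $\mu u$ term from the shifted operator, and linearity of $\cA(t)$ commutes it past the scalar factor.

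The key technical step is to verify that $M_\mu$ and $M_{-\mu}$ are mutually inverse continuous operators on each of $L^r(J;X)$ and $\bW^{1,r}_0(J;D,X)$, with quantitative norm bounds. Since $|e^{\mu t}| \leq e^{T|\mu|}$ on $J$, the bound on $L^r(J;X)$ is immediate from H\"older. For $\bW^{1,r}_0(J;D,X)$, the product rule yields $(M_\mu u)' = \mu M_\mu u + M_\mu u'$, and $M_\mu$ preserves the vanishing initial condition; hence the operator norm is controlled by roughly $(1+|\mu|)\,e^{T|\mu|}$. The finiteness of $J$ is essential here, otherwise the multipliers would be unbounded; this is also the reason why a scalar shift truly is harmless in this setting.

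Once these tools are in place, inversion and the quantitative norm bound follow by a factorisation argument: the equivalence above can be rewritten as the identity
\[
  \partial + \cA + \mu \;=\; M_{-\mu}\,(\partial+\cA)\,M_\mu
\]
of bounded operators $\bW^{1,r}_0(J;D,X) \to L^r(J;X)$. The right-hand side is invertible by assumption on $\cA$, with inverse $M_{-\mu}\,(\partial+\cA)^{-1}\,M_\mu$; this both supplies existence and uniqueness of the solution to the shifted equation (i.e.\ nonautonomous maximal parabolic $L^r(J;X)$-regularity for $(\cA(t)+\mu)_{t\in J}$) and gives an operator-norm bound of the form $(1+|\mu|)\,e^{2T|\mu|}\,\|(\partial+\cA)^{-1}\|$, depending only on $|\mu|$, $T$, and the norm of $(\partial+\cA)^{-1}$. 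I do not expect any genuine obstacle; the only care needed is in tracking the constants through the composition and verifying the mapping properties of $M_\mu$ on the maximal-regularity space.
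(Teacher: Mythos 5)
Your proposal is correct and takes essentially the same route as the paper: the paper performs exactly the exponential change of variables $u(t) = e^{-\mu t}v(t)$ (which is your conjugation identity $\partial+\cA+\mu = M_{-\mu}(\partial+\cA)M_\mu$ in disguise) and arrives at a bound of the same shape, namely $e^{|\Re \mu|T}\bigl(|\mu|\,C_{D\embeds X}+1\bigr)\,\|(\partial+\cA)^{-1}\|$. The only cosmetic difference is that you package the argument as an operator factorisation while the paper writes it as a pointwise transformation of solutions; your bookkeeping of the multiplier norms (including the embedding constant $C_{D\embeds X}$ needed to control the extra $\mu\,M_{-\mu}v$ term in the $\bW^{1,r}_0$-norm) matches the paper's.
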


\begin{proof}
  We use that for every $f \in L^r(J;X)$, the solutions $u,v \in \bW^{1,r}_0(J;D,X)$ to
    \begin{align*}
      u'(t) + \cA(t)u(t) + \mu u(t) & = f(t)\phantom{e^{\mu t}} \quad \text{in } X,
      \qquad u(0) = 0,
      \intertext{and}
      v'(t) + \cA(t)v(t) & = e^{\mu t}f(t) \quad \text{in } X,
      \qquad v(0) = 0
  \end{align*}
  can be transformed into each other along $u(t) \define e^{-\mu t}v(t)$. In
  particular, one is unique if and only if the other is so. Finally, from the
  assumption, we have
  \begin{equation*}
    \|v\|_{\bW^{1,r}_0(J;D,X)} %= \|v'\|_{L^r(J;X)} + \|v\|_{L^r(J;D)}
    \leq  \bigl\|(\partial +
    \cA)^{-1}\bigr\|_{L^r(0,T;X) \to \bW^{1,r}_0(0,T;D,X)} \|e^{\mu \cdot}f\|_{L^r(J;X)}.
  \end{equation*}
  % By assumption, for every $f \in L^r(J;X)$ the problem
  % \begin{equation*}
  %   v'(t) + \cA(t)v(t) = e^{\mu t}f(t) \quad \text{in } X,
  %   \qquad v(0) = 0,
  % \end{equation*}
  % admits a unique solution $v \in \bW^{1,r}_0(J;D,X)$. Then
  % $u(t) \define e^{-\mu t}v(t)\in \bW^{1,r}_0(J;D,X)$ is in fact a
  % solution to
  %   \begin{equation*}
  %   u'(t) + \cA(t)u(t) + \mu u(t) = f(t) \quad \text{in } X,
  %   \qquad u(0) = 0,
  % \end{equation*}
  and with $u(t) \define e^{-\mu t}v(t)$ this leads to
  \begin{equation*}
    \|u\|_{\bW^{1,r}_0(J;D,X)} \leq e^{|\Re \mu|T}
    \bigl(|\mu|\, C_{D\embeds X} + 1\bigr) \, \bigl\|(\partial +
    \cA)^{-1}\bigr\|_{L^r(0,T;X) \to \bW^{1,r}_0(0,T;D,X)} \|f\|_{L^r(J;X)}
  \end{equation*}
  with the norm $C_{D\embeds X}$ of the embedding $D \embeds
  X$. This implies the claims.
\end{proof}

We take the opportunity to state another lemma which
collects some classical embedding results relating the maximal parabolic
regularity spaces to their trace space $(X,D)_{1-1/r,r}$, a real
interpolation space between $X$ and $D$. For proofs, we refer
to~\cite[Ch.~III.4.10]{Amann95}; see
also~\cite[Lem.~2.11]{DtER15} for a particularly simple proof
of~\eqref{eq:maxreghoelderembed}.
  
\begin{lemma} \label{l-maxparregfacts} Let $r \in(1,\infty)$. Then we have the
  % following:
  % % \renewcommand{\labelenumi}{\roman{enumi})}
  % % \begin{enumerate}
  % % \item
  % \begin{enumerate}[(i)]
  % \item There are the
    embeddings
    \begin{equation} \label{eq:embedcont} \bW^{1,r}(J;D,X) \embeds
      C\bigl(\overline{J}; (X, D)_{1-1/r,r}\bigr)
    \end{equation}
    and, for $0<\zeta <1-\frac1r$ and
    $0 < \alpha = 1 - \frac {1}{r}-\zeta$:
    \begin{equation} \label{eq:maxreghoelderembed} \bW^{1,r}(J;D,X)
      \embeds C^\alpha\bigl(\overline J;(X,D)_{\zeta,1}\bigr).
    \end{equation}
    % In particular, the point evaluation
    % \begin{equation*}
    %   \gamma_\delta \colon u \mapsto u(\delta)  \quad \bW^{1,r}(J;D,X) \to
    %   (X, D)_{1-1/r,r}
    % \end{equation*}
    % is continuous for every $\delta \in \overline J$.
  % \item Let $\cA$ be an operator satisfying maximal parabolic
  %   regularity on $X$ with domain $D$ and let $\cS$ be the
  %   analytic semigroup on $X$ generated by $-\cA$. Then, for any
  %   $S > 0$,
  %   \begin{equation}
  %     \label{eq:trace-extend-MR}
  %     \cE_S \colon x \mapsto \cS(\cdot)x, \qquad (X,D)_{1-1/r,r} \to \bW^{1,r}(0,S;D,X)
  %   \end{equation}
  %   is a continuous linear extension operator whose norm is
  %   monotonously increasing in $S$. Moreover, we have
  %   $\cE_S x(0) = x$ and $\cE_S x$ is the unique solution to
  %   \begin{equation*}
  %     u'(t) + \cA u(t) = 0 \quad \text{in}~X
  %     \qquad (\text{a.a.}~t\in (0,S)), \qquad u(0) = x.
  %   \end{equation*}
  % \end{enumerate}
\end{lemma}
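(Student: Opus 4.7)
The plan is to prove the two embeddings separately, deducing the Hölder embedding~\eqref{eq:maxreghoelderembed} from the trace embedding~\eqref{eq:embedcont} combined with an interpolation inequality and Hölder's inequality in time.

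For~\eqref{eq:embedcont}, I would first use the extension lemma (referenced as Lemma~\ref{lem:extension}) to extend any $u \in \bW^{1,r}(J;D,X)$ to a function $\tilde u \in \bW^{1,r}(\R;D,X)$ with norm controlled by $\|u\|_{\bW^{1,r}(J;D,X)}$. The classical trace characterization of Lions--Peetre then identifies $(X,D)_{1-1/r,r}$ with the space of traces at $t=0$ of functions in $\bW^{1,r}(\R_+;D,X)$ (or equivalently $\bW^{1,r}(\R;D,X)$), yielding $\tilde u(t_0) \in (X,D)_{1-1/r,r}$ for every $t_0 \in \overline J$ with a uniform bound. Continuity $\overline J \ni t \mapsto u(t) \in (X,D)_{1-1/r,r}$ then follows by a standard density argument: it holds for $u$ smooth, and the trace map is uniformly bounded from $\bW^{1,r}(J;D,X)$ into $L^\infty(\overline J;(X,D)_{1-1/r,r})$ by the previous step, so extends by continuity.

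For~\eqref{eq:maxreghoelderembed}, I would use the reiteration theorem for the real interpolation method: for $\theta \define \zeta/(1-1/r) \in (0,1)$, one has
\begin{equation*}
    \bigl(X,(X,D)_{1-1/r,r}\bigr)_{\theta,1} = (X,D)_{\zeta,1}
\end{equation*}
(or at least a continuous embedding of the left side into the right side), together with the associated Gagliardo interpolation inequality
\begin{equation*}
    \|w\|_{(X,D)_{\zeta,1}} \leq C\,\|w\|_X^{1-\theta}\,\|w\|_{(X,D)_{1-1/r,r}}^{\theta}
\end{equation*}
for all $w$ in the smaller space. Applying this with $w = u(t) - u(s)$ for $s,t \in \overline J$, I bound the two factors as follows: the fundamental theorem of calculus combined with Hölder's inequality yields
\begin{equation*}
    \|u(t)-u(s)\|_X \leq |t-s|^{1-1/r}\,\|u'\|_{L^r(J;X)},
\end{equation*}
while~\eqref{eq:embedcont} gives $\|u(t)-u(s)\|_{(X,D)_{1-1/r,r}} \leq 2\,C\,\|u\|_{\bW^{1,r}(J;D,X)}$. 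Combining these,
\begin{equation*}
    \|u(t)-u(s)\|_{(X,D)_{\zeta,1}} \leq C'\,|t-s|^{(1-1/r)(1-\theta)}\,\|u\|_{\bW^{1,r}(J;D,X)},
\end{equation*}
and one checks $(1-1/r)(1-\theta) = (1-1/r) - \zeta = \alpha$, giving the claimed $C^\alpha$-regularity.

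The main obstacle I expect is verifying that the interpolation identity and the accompanying Gagliardo-type inequality are available in this precisely stated form for arbitrary Banach couples $(X,D)$ and for the $(\cdot,\cdot)_{\zeta,1}$ scale on the target side; the paper sidesteps this by citing~\cite[Ch.~III.4.10]{Amann95} and~\cite[Lem.~2.11]{DtER15} for the second embedding. An alternative, more self-contained route would be to work directly with the K-functional: split $u(t)-u(s) = \int_s^t u'(\tau)\,d\tau$ into pieces corresponding to levels of $\|u'(\tau)\|_X$ versus the values $u(\tau) \in D$, estimate the resulting K-functional in $t$, and integrate the K-functional weighted by $t^{-\zeta-1}$ to reach the norm of $(X,D)_{\zeta,1}$—but the reiteration route above is much cleaner.
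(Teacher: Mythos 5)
Your argument is correct and is essentially the proof the paper delegates to its references: the paper itself gives no proof of Lemma~\ref{l-maxparregfacts} but points to \cite[Ch.~III.4.10]{Amann95} for~\eqref{eq:embedcont} and to \cite[Lem.~2.11]{DtER15} for~\eqref{eq:maxreghoelderembed}, and the latter's ``particularly simple proof'' is exactly your combination of the reiteration identity $(X,(X,D)_{1-1/r,r})_{\theta,1}=(X,D)_{\zeta,1}$, the multiplicative estimate $\|w\|_{(Y_0,Y_1)_{\theta,1}}\lesssim\|w\|_{Y_0}^{1-\theta}\|w\|_{Y_1}^{\theta}$ (which does hold for a general couple, by splitting the $K$-functional integral and optimizing, cf.~\cite[Thm.~1.3.3~(g)]{Triebel78}), the elementary bound $\|u(t)-u(s)\|_X\leq|t-s|^{1-1/r}\|u'\|_{L^r(J;X)}$, and the exponent bookkeeping $(1-1/r)(1-\theta)=\alpha$; your worry about availability of these tools for arbitrary couples is unfounded. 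The one point to fix is your appeal to Lemma~\ref{lem:extension} for the extension step: that lemma presupposes the existence of an operator satisfying maximal parabolic regularity on $X$ with domain $D$, which is \emph{not} a hypothesis of Lemma~\ref{l-maxparregfacts}, and its proof (via Lemma~\ref{lem:inhomo-solution}) already implicitly uses the trace-space embedding you are trying to establish, so you would risk both an extraneous assumption and a circularity. Replace it by the elementary reflection-and-cutoff extension of $u$ across $t=0$ and $t=T$, which works for any couple $D\embeds X$ and preserves the $\bW^{1,r}$-norm up to a constant; after that, the Lions--Peetre trace characterization and the density argument for continuity go through as you describe.
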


In fact, the interpolation space in
embedding~\eqref{eq:embedcont} is exactly the correct continuous
trace space in the maximal regularity setting. This can be
inferred for instance from the following consequence
of~\cite[Prop.~2.1]{Ama05a} and its proof, which is useful in a number of
ways:

\begin{lemma}
  \label{lem:inhomo-solution}
  Let $r \in (1,\infty)$ and let
  $(\cA(t)_{t \in J}$ be an operator family satisfying
  nonautonomous maximal parabolic $L^r(J;X)$-regularity with
  common domain $D$. Let $\alpha$ be an upper bound for the
  operator norm of $(\partial + \cA)^{-1}$. Suppose there exists
  another fixed operator satisfying maximal parabolic regularity
  on $X$ with domain $D$. Then for every
  $u_0 \in (X,D)_{1-1/r,r}$ and every $f \in L^r(J;X)$ there
  exists a unique function $u \in \bW^{1,r}(J;D,X)$ which satisfies
  \begin{equation*}
    u'(t) +\cA(t)u(t)=f(t) \quad \text{in}~X, \qquad u(0) = u_0
  \end{equation*}
  and we have the estimate
  \begin{equation*}
    \|u\|_{\bW^{1,r}(0,T;D,X)} \lesssim(\alpha+1) \Bigl[\|f\|_{L^r(0,T;X)} + \|u_0\|_{(X,D)_{1-1/r,r}}\Bigr].
  \end{equation*}    
  % For every $S \in (0,T]$ we have the estimate
  % \begin{equation*}
  %   \|u\|_{\bW^{1,r}(0,S;D,X)} \lesssim \left(\bigl\|(\partial +
  %   \cA)^{-1}\bigr\|_{L^r(0,T;X) \to \bW^{1,r}_0(0,T;D,X)} + 1 \right)\Bigl[\|f\|_{L^r(0,S;X)} + \|u_0\|_{(X,D)_{1-\frac1r,r}}\Bigr]
  % \end{equation*}
  % where the implicit constant is uniformly bounded in $S$.
\end{lemma}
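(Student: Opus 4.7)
The plan is to reduce to the already-known case of zero initial value (Definition~\ref{d-maxpar}) by lifting $u_0$ to a function on $J$ via the auxiliary autonomous operator.

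First, I would use the hypothesized fixed operator $\cA_0$ with maximal parabolic regularity on $X$ and domain $D$ to construct a lift. By Remark~\ref{r-invariantr}, $-\cA_0$ generates an analytic semigroup on $X$, and the trace space for the associated maximal regularity space on $J$ is exactly $(X,D)_{1-1/r,r}$; this is what makes the cited~\cite[Prop.~2.1]{Ama05a} characterization of traces applicable. Concretely, I would set $w(t) \define e^{-t\cA_0}u_0$ (or invoke the trace characterization directly) to obtain $w \in \bW^{1,r}(J;D,X)$ with $w(0) = u_0$ and
\begin{equation*}
  \|w\|_{\bW^{1,r}(J;D,X)} \lesssim \|u_0\|_{(X,D)_{1-1/r,r}},
\end{equation*}
where the implicit constant depends only on $\cA_0$ (and is independent of the family $(\cA(t))_{t\in J}$ and of $\alpha$).

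Next I would split off this lift. Define $g(t) \define f(t) - w'(t) - \cA(t)w(t)$. Since $w' \in L^r(J;X)$ and $w \in L^r(J;D)$ with $\cA(\cdot) \in L^\infty(J;\cL(D,X))$ by assumption, we have $g \in L^r(J;X)$ with
\begin{equation*}
  \|g\|_{L^r(J;X)} \lesssim \|f\|_{L^r(J;X)} + \|w\|_{\bW^{1,r}(J;D,X)}.
\end{equation*}
By the nonautonomous maximal parabolic regularity hypothesis, there is a unique $v \in \bW^{1,r}_0(J;D,X)$ solving $v' + \cA(\cdot)v = g$ with $v(0) = 0$, and $\|v\|_{\bW^{1,r}} \leq \alpha \|g\|_{L^r(J;X)}$. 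Then $u \define v + w$ lies in $\bW^{1,r}(J;D,X)$, satisfies $u(0) = u_0$ and the inhomogeneous equation, and enjoys
\begin{equation*}
  \|u\|_{\bW^{1,r}(J;D,X)} \lesssim (\alpha + 1)\bigl[\|f\|_{L^r(J;X)} + \|u_0\|_{(X,D)_{1-1/r,r}}\bigr],
\end{equation*}
as required. Uniqueness follows because the difference of two solutions lies in $\bW^{1,r}_0(J;D,X)$ and solves the homogeneous problem, so it vanishes by Definition~\ref{d-maxpar}.

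The only delicate ingredient is the lift $w$: one must know that $(X,D)_{1-1/r,r}$ is the \emph{correct} trace space, namely that every element of it is the value at $t=0$ of some function in $\bW^{1,r}(J;D,X)$ with a controlled norm. This is exactly where the auxiliary operator $\cA_0$ enters, via its autonomous maximal regularity and the standard identification of its trace space. Once this lift is in hand, the rest is a routine reduction to the definition of nonautonomous maximal parabolic regularity; no further subtlety remains.
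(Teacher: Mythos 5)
Your argument is correct and is precisely the standard lifting-and-splitting proof underlying the cited result \cite[Prop.~2.1]{Ama05a}, which the paper invokes without reproving: the auxiliary operator supplies the trace-space lift $w$ of $u_0$ with $\|w\|_{\bW^{1,r}(J;D,X)}\lesssim\|u_0\|_{(X,D)_{1-1/r,r}}$, and the nonautonomous maximal regularity hypothesis handles the remainder with zero initial value. The only dependence worth flagging is that your implicit constant also involves $\sup_{t}\|\cA(t)\|_{\cL(D\to X)}$ (through the term $\cA(\cdot)w$ in $g$), which is consistent with how the lemma is applied later to uniformly bounded coefficient tensor families.
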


\begin{remark}
  By Lemma~\ref{lem:inhomo-solution}, whenever there exists an
  operator satisfying maximal parabolic regularity on $X$ with
  domain $D$, then for every given $u_0 \in (X,D)_{1-1/r,r}$,
  Lemma~\ref{lem:inhomo-solution} yields a function
  $\overline{u_0} \in \bW^{1,r}(J;D,X)$ with
  $\overline{u_0}(0) = u_0$ and the estimate
  \begin{equation*}
    \|\overline{u_0}\|_{\bW^{1,r}(0,T;D,X)} \lesssim \|u_0\|_{(X,D)_{1-1/r,r}}
  \end{equation*}
  which is the converse
  to~\eqref{eq:embedcont}.\label{rem:trace-extend}  
\end{remark}

We can moreover
use the foregoing lemma to easily derive an extension operator acting between
maximal regularity spaces on different time intervals:

% In fact, we can use maximal parabolic regularity for some fixed
% operator $\cA$ to construct further extensions on the maximal
% regularity spaces. For $S \in (0,T]$, we use the concatenation
% $f \oplus_\tau g \colon (0,S) \to X$ of two
% functions $f \colon (0,\tau) \to X$ and
% $g \colon (0,S-\tau)$ defined by
% \begin{equation*}
% (f\oplus_\tau g)(t) \define
% \begin{cases}
%   f(t) & \text{for } 0 < t \leq \tau, \\ g(t-\tau) & \text{for } \tau < t
%   < S.
% \end{cases}
% \end{equation*}
% By~\cite[Lemma~7.1]{Ama05b} we have $f \otimes_\tau g \in \bW^{1,r}(0,S;D,X)$ when $f \in
% \bW^{1,r}(0,\tau;D,X)$ and $g \in \bW^{1,r}(0,S-\tau;D,X)$ with $g(0) = f(\tau)$.

\begin{lemma}[{\cite[Lemma~7.2]{Ama05b}}]
  \label{lem:extension}
  Let $r \in(1,\infty)$ and let $\cA$ be an
  operator satisfying maximal parabolic regularity on $X$ with
  domain $D$.  Then, for every $S \in J$ and every
  $u \in \bW^{1,r}(0,S;D,X)$, the unique solution $v\enifed E_S u$
  to
  \begin{equation*}
    v'(t) + \cA v(t) = f(t) \quad \text{in } X, \qquad v(0) =
    u(0), \qquad  f(t) \define \chi_{(0,S)}(t) \bigl[u'(t) + \cA u(t)\bigr]
    % \begin{cases}
    %   u'(t) + \cA u(t) & (t \in (0,S)),\\ 0 &
    %   (t \in (S,T)).
    % \end{cases}
  \end{equation*}
  induces a continuous linear extension operator
  \begin{equation*}
    E_S \colon \bW^{1,r}(0,S;D,X) \to \bW^{1,r}(0,T;D,X), \qquad
    (E_Su)|_{(0,S)} = u,
  \end{equation*}
  with
  \begin{equation*}
    \|E_Su\|_{\bW^{1,r}(0,T;D,X)} \lesssim
    \|u\|_{\bW^{1,r}(0,S;D,X)} + \|u(0)\|_{(X,D)_{1-1/r,r}},
  \end{equation*}
  where the implicit constant does \emph{not} depend on $S$.
\end{lemma}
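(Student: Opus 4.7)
The plan is to apply Lemma~\ref{lem:inhomo-solution} directly on the fixed interval $(0,T)$ with the constant (autonomous) family $t \mapsto \cA$, which by Remark~\ref{r-invariantr} satisfies maximal parabolic $L^r(I;X)$-regularity on every finite interval $I$. The candidate datum on $(0,T)$ is the pair $(f, u(0))$ with $f \define \chi_{(0,S)}\cdot [u' + \cA u]$, exactly as in the statement. The unique solution delivered by Lemma~\ref{lem:inhomo-solution} is then declared to be $E_S u$; the linearity of $E_S$ is then immediate from uniqueness.

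First I would check that the data are admissible. The embedding~\eqref{eq:embedcont} applied on $(0,S)$ gives $u(0) \in (X,D)_{1-1/r,r}$, so the initial value is meaningful. Since $u \in L^r(0,S;D)$ and $\cA \in \cL(D,X)$, we have $\cA u \in L^r(0,S;X)$, and together with $u' \in L^r(0,S;X)$ the extension by zero $f$ belongs to $L^r(0,T;X)$ with
\begin{equation*}
  \|f\|_{L^r(0,T;X)} = \|u' + \cA u\|_{L^r(0,S;X)} \leq \|u\|_{\bW^{1,r}(0,S;D,X)}.
\end{equation*}
Lemma~\ref{lem:inhomo-solution} then supplies a unique $v \define E_Su \in \bW^{1,r}(0,T;D,X)$ with $v(0) = u(0)$ and the estimate
\begin{equation*}
  \|v\|_{\bW^{1,r}(0,T;D,X)} \lesssim \|f\|_{L^r(0,T;X)} + \|u(0)\|_{(X,D)_{1-1/r,r}}.
\end{equation*}
The point which makes the implicit constant independent of $S$ is precisely that we apply maximal regularity on the fixed interval $(0,T)$: the constant depends only on $\|(\partial + \cA)^{-1}\|$ over $(0,T)$, while the $S$-dependence enters solely through $\|f\|_{L^r(0,T;X)}$, which is already controlled by $\|u\|_{\bW^{1,r}(0,S;D,X)}$.

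The only step that requires a separate (short) argument, and which I regard as the sole mild subtlety, is the restriction identity $(E_S u)|_{(0,S)} = u$. By construction, $v|_{(0,S)}$ and $u$ both belong to $\bW^{1,r}(0,S;D,X)$, satisfy the same ODE $w'(t) + \cA w(t) = u'(t) + \cA u(t)$ on $(0,S)$ in $X$, and share the initial value $u(0)$. Hence $w \define v|_{(0,S)} - u$ solves the homogeneous problem $w' + \cA w = 0$ in $X$ on $(0,S)$ with $w(0) = 0$. The uniqueness clause of autonomous $L^r(0,S;X)$-maximal regularity for $\cA$---again available via Remark~\ref{r-invariantr}---forces $w \equiv 0$, which yields the required restriction property and completes the proof.
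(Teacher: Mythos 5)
Your proposal is correct and follows exactly the route the paper intends: the lemma is cited from Amann and the text immediately preceding it says it is derived from Lemma~\ref{lem:inhomo-solution}, which is precisely what you do (solve on the fixed interval $(0,T)$ with the zero-extended right-hand side and initial value $u(0)$, then identify the restriction by uniqueness on $(0,S)$). The only cosmetic point is that $\|u'+\cA u\|_{L^r(0,S;X)}$ is bounded by $\max(1,\|\cA\|_{\cL(D\to X)})\,\|u\|_{\bW^{1,r}(0,S;D,X)}$ rather than by the norm itself, but this constant is independent of $S$ and is absorbed into the $\lesssim$.
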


\begin{remark}
  In Lemmata~\ref{lem:inhomo-solution} and~\ref{lem:extension}, it would have
  been sufficient to assume that there exists an operator $\cA$ which is the
  negative generator of an analytic semigroup on $X$ with domain $D$ instead of
  requiring maximal parabolic regularity for such an operator. Since any
  operator satisfying maximal parabolic regularity is also the negative
  generator of an analytic semigroup~\cite{Dor93}, we have posed a stronger
  condition for the sake of simplicity--but then, in fact, the natural operator
  candidate for either condition for $X = \bmW^{-1,q}_\fD(\Omega)$ as we
  consider later would have been the same: the negative Laplacian.
\end{remark}

As a last preliminary lemma on nonautonomous maximal parabolic
regularity, we quote~\cite[Lem.~4.1]{Ama04a}
and~\cite[][Rem.~3.2]{Ama05} which will allow us to deal with
restrictions and shifts in the context of nonautonomous maximal parabolic regularity in a uniform manner:

\begin{lemma}
  \label{lem:maxreg-shifts}
  Adopt the setting of Lemma~\ref{lem:inhomo-solution} and assume
  in addition that for any subinterval
  $(\tau,S) \subseteq J$, % the family
  % $(\cA(t))$
  $\cA$ satisfies nonautonomous maximal parabolic $L^r(\tau,S;X)$-regularity
  with common domain $D$. Then for every $u_0 \in (X,D)_{1-1/r,r}$ and every
  $f \in L^r(\tau,S;X)$ there exists a unique function
  $u \in \bW^{1,r}(\tau,S;D,X)$ which satisfies
  \begin{equation*}
    u'(t) +\cA(t)u(t)=f(t) \quad \text{in}~X, \qquad u(\tau) = u_0
  \end{equation*}
  for almost all $t \in(\tau,S)$, and we have
  the estimate
  \begin{equation*}
    \|u\|_{\bW^{1,r}(\tau,S;D,X)} \lesssim % \bigl[\alpha+1\bigr]
    (\alpha+1)
    \Bigl[\|f\|_{L^r(\tau,S;X)} + \|u_0\|_{(X,D)_{1-1/r,r}}\Bigr]
  \end{equation*}
  where the implicit constant does \emph{not} depend on $\tau$
  and $S$.
\end{lemma}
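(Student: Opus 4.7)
The plan is to reduce the general problem to the case $u_0=0$ by subtracting a suitable time-translated extension of $u_0$, and then to handle the reduced case by a zero-extension-plus-uniqueness argument on the full interval $(0,T)$ which automatically produces a constant depending only on $\alpha$.

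For the case $u_0=0$: given $f\in L^r(\tau,S;X)$, extend $f$ by zero to $\tilde f\in L^r(0,T;X)$ with equal norm, and let $\tilde u\in\bW^{1,r}_0(0,T;D,X)$ be the unique solution of $\tilde u'+\cA\tilde u=\tilde f$ with $\tilde u(0)=0$ supplied by the standing nonautonomous maximal parabolic $L^r(J;X)$-regularity. Because $\tilde f\equiv 0$ on $(0,\tau)$, the added $L^r(0,\tau;X)$-regularity hypothesis forces $\tilde u\equiv 0$ on $(0,\tau)$ by uniqueness, so $\tilde u(\tau)=0$ and $\tilde u|_{(\tau,S)}$ solves the problem on $(\tau,S)$. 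The bound
\begin{equation*}
\|\tilde u\|_{\bW^{1,r}(\tau,S;D,X)}\le\|\tilde u\|_{\bW^{1,r}(0,T;D,X)}\le\alpha\|f\|_{L^r(\tau,S;X)}
\end{equation*}
is manifestly uniform in $(\tau,S)\subseteq J$, and uniqueness on $(\tau,S)$ is precisely the added hypothesis applied to the difference of two candidate solutions.

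For general $u_0\in(X,D)_{1-1/r,r}$, Remark~\ref{rem:trace-extend} yields $\overline{u_0}\in\bW^{1,r}(0,T;D,X)$ with $\overline{u_0}(0)=u_0$ and $\|\overline{u_0}\|_{\bW^{1,r}(0,T;D,X)}\lesssim\|u_0\|_{(X,D)_{1-1/r,r}}$. Define $w(t)\define\overline{u_0}(t-\tau)$ on $(\tau,S)\subseteq(\tau,T)$; time translation is an isometry of Bochner--Sobolev spaces onto their translates, so $w\in\bW^{1,r}(\tau,S;D,X)$ with $w(\tau)=u_0$ and $\|w\|_{\bW^{1,r}(\tau,S;D,X)}\lesssim\|u_0\|_{(X,D)_{1-1/r,r}}$, all with constants independent of $\tau,S$. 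Writing $u=\tilde u+w$, the remainder has to solve $\tilde u'+\cA\tilde u=f-w'-\cA(\cdot)w$ on $(\tau,S)$ with $\tilde u(\tau)=0$, and its right-hand side lies in $L^r(\tau,S;X)$ with norm $\lesssim\|f\|_{L^r(\tau,S;X)}+(1+M)\|w\|_{\bW^{1,r}(\tau,S;D,X)}$, where $M$ is the fixed $L^\infty(J;\cL(D\to X))$-bound for $\cA$ coming from Definition~\ref{d-maxpar}. Applying the $u_0=0$ case just proved to $\tilde u$ delivers the asserted estimate with overall prefactor proportional to $\alpha+1$, uniformly in $(\tau,S)\subseteq J$.

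The main technical point is simply bookkeeping of constants: keeping every quantity entering the final estimate manifestly independent of $\tau$ and $S$. This rests on two observations: (i) the zero-extension trick replaces a potentially $\tau,S$-dependent maximal regularity constant on $(\tau,S)$ by the fixed constant $\alpha$ on $(0,T)$; and (ii) the initial-value extension $\overline{u_0}$ is constructed once and for all on $(0,T)$, and its time translate to any $(\tau,S)\subseteq J$ can only restrict, never enlarge, its norm.
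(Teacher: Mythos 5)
Your argument is correct. Note first that the paper does not prove this lemma at all: it is quoted from Amann (\cite[Lem.~4.1]{Ama04a} and \cite[Rem.~3.2]{Ama05}), so you have supplied a self-contained proof where the text offers only a citation. Your two-step reduction is sound: the zero-extension of $f$ to $(0,T)$ combined with uniqueness on the subinterval $(0,\tau)$ (which is exactly an instance of the added hypothesis) forces the global solution to vanish on $(0,\tau)$ --- here one should say explicitly that $\tilde u(\tau)=0$ follows from the embedding \eqref{eq:embedcont} into $C(\overline J;(X,D)_{1-1/r,r})$ --- and the restriction to $(\tau,S)$ then inherits the $\tau,S$-independent constant $\alpha$ from the full interval, which is precisely the point the bare subinterval hypothesis does not deliver. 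The translation of the trace lift $\overline{u_0}$ is legitimate since $(0,S-\tau)\subseteq(0,T)$, translation is an isometry of the maximal regularity spaces, and restriction only decreases the norm; the boundedness $M=\esssup_t\|\cA(t)\|_{\cL(D\to X)}<\infty$ needed to control $\cA(\cdot)w$ is part of Definition~\ref{d-maxpar}. The resulting prefactor $\alpha(1+M)C_{\mathrm{tr}}+C_{\mathrm{tr}}$ is indeed $\lesssim(\alpha+1)$ with an implicit constant depending only on $M$ and the trace-extension constant, not on $\tau$ or $S$, matching the assertion.
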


We finally note that---of course---the (negative shifted) Laplacian
$-\bm{\Delta}_q + 1$ as in Definition~\ref{d-2} satisfies maximal parabolic
regularity on $\bmW^{-1,q}_{\fD}(\Omega))$. This will enable us to use it as a
reference operator for both extrapolation of maximal parabolic regularity and
the foregoing results, cf.\ Remark~\ref{rem:trace-extend} and
Lemma~\ref{lem:maxreg-shifts}.
% This will enable us to use it as
% the anchor for $\bmW^{\pm 1,q\pm\delta}_\fD(\Omega)$ to serve as a \emph{pair of
%   common maximal parabolic regularity}~(\cite[Ch.~3]{DtER17}).
Since we have
defined $-\bm{\Delta}_q + 1$ as a bounded linear operator
$\bmW^{1,q}_\fD(\Omega) \to \bmW^{-1,q}_\fD(\Omega)$, we need the associated parameters $q$ to be
isomorphism indices here, recall Definition~\ref{d-isoindex}. Indeed, the following result is to be found
e.g.\ in~\cite[Lem.~6.9 \& Thm.~6.10]{DtER17} for the scalar case, but it
transfers to the system case immediately since the Laplacian is a
diagonal operator:

\begin{theorem} \label{thm:laplace-max-reg} If $q \in \fI$, then
  $-\bm{\Delta_}q + 1$ satisfies maximal parabolic regularity on
  the space $\bmW^{-1,q}_\fD(\Omega)$ with domain
  $\bmW^{1,q}_\fD(\Omega)$.
\end{theorem}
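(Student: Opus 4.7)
The plan is to reduce the system statement to the scalar case stated in the cited references. The operator $-\bm{\Delta}_q + 1 \colon \bmW^{1,q}_\fD(\Omega) \to \bmW^{-1,q}_\fD(\Omega)$ is block-diagonal with respect to the product structure $\bmW^{\pm 1,q}_\fD(\Omega) = \prod_{i=1}^m W^{\pm 1,q}_{D_i}(\Omega)$: recalling its definition via the coefficient tensor with $\fA^{ii} = \id_d$ and all other coefficients zero, we have
\begin{equation*}
  \blangle (-\bm{\Delta}_q + 1)\bmu, \bm{v}\brangle = \sum_{i=1}^m \blangle (-\Delta_{D_i,q} + 1) u_i,\, v_i\brangle,
\end{equation*}
where $-\Delta_{D_i,q}+1$ denotes the scalar shifted Laplacian acting $W^{1,q}_{D_i}(\Omega) \to W^{-1,q}_{D_i}(\Omega)$. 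Each $D_i$ individually satisfies Assumption~\ref{assu-general}, and the hypothesis $q \in \fI$ (which, by Definition~\ref{d-isoindex}, refers to the intersection Dirichlet part $\fD$) forces $q$ to be a scalar isomorphism index for every $D_i$ separately, as follows for example from an inspection of Proposition~\ref{p-iso-indices-open} applied component by component.

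Given this block-diagonal structure, the maximal regularity assertion follows componentwise. For $\bm{f} = (f_1,\ldots,f_m) \in L^r(J; \bmW^{-1,q}_\fD(\Omega))$ and each $i$, the scalar result of \cite[Lem.~6.9 \& Thm.~6.10]{DtER17} yields a unique $u_i \in W^{1,r}(J; W^{-1,q}_{D_i}(\Omega)) \cap L^r(J; W^{1,q}_{D_i}(\Omega))$ with $u_i(0)=0$ solving $u_i'(t) + (-\Delta_{D_i,q}+1)u_i(t) = f_i(t)$. The vector $\bmu = (u_1,\ldots,u_m)$ is then the unique element of $\bW^{1,r}_0(J;\bmW^{1,q}_\fD(\Omega),\bmW^{-1,q}_\fD(\Omega))$ solving the (in fact uncoupled) system, and the norm of the associated inverse parabolic operator on the product is controlled by an $\ell^q$ aggregate of the scalar inverses.

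The genuine work is thus hidden in the scalar case, which is treated as a black box. Were one to reproduce it, the natural plan is: realize $-\Delta_{D,2}+1$ via the symmetric coercive form $\mathfrak{a}(u,v) = \int_\Omega \nabla u \cdot \overline{\nabla v} + u\overline{v}$ on $W^{1,2}_D(\Omega)$ as a self-adjoint, boundedly invertible operator on $W^{-1,2}_D(\Omega)$ (equipped with the Hilbert norm induced by $\mathfrak{a}$ itself via the Riesz isomorphism); this operator automatically has a bounded $H^\infty$-calculus and, in particular, maximal $L^r$-regularity for every $r \in (1,\infty)$. The hard part, and the main obstacle, is the extrapolation off the Hilbert anchor $q=2$ to general $q\in\fI$ under only Ahlfors--David regularity of $D$: this requires identifying the parabolic solution operator as an isomorphism between the interpolation scales of Lemma~\ref{l-vectorint} and then invoking Sneiberg's principle (Theorem~\ref{t-sneib}) at the level of $\partial + (-\Delta_{D,q}+1)$, with openness of the admissible range being precisely the content of Proposition~\ref{p-iso-indices-open}.
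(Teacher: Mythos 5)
Your proposal is correct and is essentially the paper's own argument: the paper simply cites \cite[Lem.~6.9 \& Thm.~6.10]{DtER17} for the scalar case and remarks that the result ``transfers to the system case immediately since the Laplacian is a diagonal operator,'' which is exactly your componentwise reduction. One small notational point: $\bmW^{\pm1,q}_\fD(\Omega)$ is by Definition~\ref{def:sobo-spaces} the product $\prod_i W^{\pm1,q}_{D_i}(\Omega)$ over the \emph{individual} Dirichlet parts (not spaces built on the intersection $\fD$), so the fact that $q\in\fI$ makes each scalar $-\Delta_{D_i,q}+1$ an isomorphism is immediate from block-diagonality rather than from any component-by-component reading of Proposition~\ref{p-iso-indices-open}.
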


\subsection{Nonautonomous maximal parabolic regularity for elliptic systems}

We now move to nonautonomous versions of the elliptic system operators $\bmL$. To
this end, we extend the notion of a coefficient tensor:

\begin{definition}[Coefficient tensor family]\label{def:coeff-tensor-family}
  A family $(\tA_t)_{t\in J}$ is a \emph{coefficient tensor family} when
  $\tA_t$ is a coefficient tensor in the sense of
  Definition~\ref{def:coeff-tensor} for each $t\in J$, such that
  \begin{equation*}
    \|\tA\|_\infty \define \sup_{t\in J} \|\tA_t\| < \infty 
  \end{equation*}
  and $J \ni t \mapsto \tA_t \in L^1(\Omega;\cL(\C^{m(d+1)})$ is measurable.
  % Let further $\Xi(\lambda,\gamma,M)$ denote the set of all coefficient tensor
  % families $(\tA_t)_{t \in J}$ which satisfy~\eqref{eq:ellippt} and
  % $\|\fA\|_\infty \define \sup_{t\in J} \|\fA_t\| \leq M$, and for which
  % $J \ni t \mapsto \fA^{ij}_t \in L^1(\Omega;\cL(\C^d)$ is measurable for
  % $i,j=1,\dots,m$.
\end{definition}

With a coefficient tensor family we also associate the corresponding family of
system operators $(\bmL_t)_{t\in J}$ as in
Definition~\ref{d-systopera}. See~\cite[Ch.~7.1]{DtER17} regarding the
$L^1$-measurability requirement in the foregoing definition; it is a suitable
minimal requirement to have measurability of
$t \mapsto \langle \bmL_t \bmu, \bm{v}\rangle$ for all
$\bmu,\bm{v} \in \bmW^{1,2}_\fD(\Omega)$.

We are interested in nonautonomous maximal parabolic
$L^r(J;\bmW^{-1,q}_\fD(\Omega))$-regularity for the nonautonomous operator
family $\bmL$. To this end, we leverage a fundamental ``parabolic Lax-Milgram
lemma'' of J.-L.~Lions~(\cite[Section~XVIII.§3]{DL92}) in the Hilbert
space setting $r=q=2$:

\begin{theorem}[Hilbert space case] \label{t-lions} Let
  $(\tA_t)_{t\in J}$ be a coefficient tensor family and $\bmL_t$ the
  associated system operators. Assume that there are
  $\gamma > 0$ and $\lambda \geq 0$ for which the \emph{uniform}
  $\lambda$-weak ellipticity condition is satisfied:
  \begin{equation} \label{eq:ellippt} \inf_{t\in J}\Re \blangle \bmL_t \bmu, \bmu \brangle +
    \lambda \int_\Omega |\bmu|^2 \ge \gamma
    \int_\Omega |\nabla \bmu|^2 \qquad (\bmu \in \bmW^{1,2}_\fD(\Omega))
  \end{equation}
  Then for every $\Lambda > \lambda$, the operator family
  $(\bmL_t + \Lambda)_{t\in J}$ satisfies nonautonomous maximal parabolic
  $L^2(J;\bmW^{-1,2}_\fD(\Omega))$-regularity. More precisely, for every
  $\bm{f} \in L^2(J;\bmW^{-1,2}_\fD(\Omega))$, there exists a unique function
  $\bmu \in \bW^{1,2}_0(J;\bmW^{1,2}_\fD(\Omega)),\bmW^{-1,2}_\fD(\Omega))$
  such that
  \begin{equation*}
    \bmu'(t) + \bmL_t \bmu(t) + \Lambda \bmu(t) = \bm{f}(t) \quad \text{in}~\bmW^{-1,2}_\fD(\Omega),
   % \qquad (\text{a.a.}~t\in J),
  \end{equation*}
  for almost all $t \in J$, and we have the estimate
  \begin{equation*}
    \|\bmu\|_{\bW^{1,2}_0(J;\bmW^{1,2}_\fD(\Omega),\bmW^{-1,2}_\fD(\Omega))} \leq
    \frac{\min(\Lambda-\lambda,\gamma) + \|\tA\|_\infty + \Lambda}{\min(\Lambda-\lambda,\gamma)} \,\|\bm{f}\|_{L^2(J;\bmW^{-1,2}_\fD(\Omega))}.
  \end{equation*}
\end{theorem}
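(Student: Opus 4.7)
The theorem is essentially an application of the cited parabolic Lax--Milgram lemma of J.-L.~Lions, so the plan is to recast the statement in the abstract form required there and then read off the quantitative estimate by a standard energy argument. I would introduce the time-dependent sesquilinear form
\begin{equation*}
  a(t;\bmu,\bm{v}) \define \blangle (\bmL_t + \Lambda)\bmu, \bm{v}\brangle = \int_\Omega \tA_t\begin{bmatrix}\bmu\\ \nabla\bmu\end{bmatrix} \cdot \overline{\begin{bmatrix}\bm{v}\\ \nabla\bm{v}\end{bmatrix}} + \Lambda\int_\Omega \bmu\cdot\overline{\bm{v}}
\end{equation*}
on the Gelfand triple $V \embeds H \embeds V'$, where $V \define \bmW^{1,2}_\fD(\Omega)$, $H \define \bmL^2(\Omega)$, and $V' = \bmW^{-1,2}_\fD(\Omega)$. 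The hypotheses of Lions' theorem are uniform boundedness, uniform coercivity, and measurability in $t$ of $a(t;\cdot,\cdot)$, which I verify in turn.

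Uniform boundedness $|a(t;\bmu,\bm{v})| \leq (\|\tA\|_\infty + \Lambda)\|\bmu\|_V\|\bm{v}\|_V$ is immediate from Cauchy--Schwarz, the definition of $\|\tA\|_\infty$, and the embedding constant $\|\bmu\|_H \leq \|\bmu\|_V$ (cf.\ Remark~\ref{rem:sys-op}). Uniform coercivity follows by inserting the shift $\Lambda > \lambda$ into the G\aa{}rding inequality~\eqref{eq:ellippt}:
\begin{equation*}
  \Re a(t;\bmu,\bmu) \geq \gamma\int_\Omega|\nabla \bmu|^2 + (\Lambda-\lambda)\int_\Omega|\bmu|^2 \geq \min(\Lambda-\lambda,\gamma)\,\|\bmu\|_V^2.
\end{equation*}
Measurability of $t \mapsto a(t;\bmu,\bm{v})$ for fixed $\bmu,\bm{v}\in V$ is a consequence of the $L^1$-measurability of $t\mapsto \tA_t$ from Definition~\ref{def:coeff-tensor-family}. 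Lions' theorem then yields a unique $\bmu \in L^2(J;V)\cap H^1(J;V')$ solving the equation in the $V'$-sense with $\bmu(0)=0$, which is the $\bW^{1,2}_0(J;V,V')$ assertion.

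For the quantitative bound I would test the equation pointwise against $\bmu(t)$ and use the standard Gelfand identity $\tfrac{d}{dt}\|\bmu(t)\|_H^2 = 2\Re\blangle\bmu'(t),\bmu(t)\brangle$ (valid on $L^2(J;V)\cap H^1(J;V')$). Integrating over $J$, using $\bmu(0)=0$, the coercivity, and Cauchy--Schwarz in time gives
\begin{equation*}
  \min(\Lambda-\lambda,\gamma)\,\|\bmu\|_{L^2(J;V)}^2 \leq \|\bm{f}\|_{L^2(J;V')}\,\|\bmu\|_{L^2(J;V)},
\end{equation*}
hence $\|\bmu\|_{L^2(J;V)} \leq \min(\Lambda-\lambda,\gamma)^{-1}\|\bm{f}\|_{L^2(J;V')}$. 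Rearranging the equation as $\bmu'(t) = \bm{f}(t) - (\bmL_t+\Lambda)\bmu(t)$ and taking the $V'$-norm, then invoking $\|\bmL_t + \Lambda\|_{V\to V'} \leq \|\tA\|_\infty + \Lambda$ (Remark~\ref{rem:sys-op}), delivers the corresponding bound on $\|\bmu'\|_{L^2(J;V')}$; combining both and collecting terms produces the stated constant.

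I do not expect a serious obstacle here: the analytic heavy lifting is done by Lions' theorem, and the remaining content is a careful but routine verification of hypotheses plus tracking the coercivity and continuity constants through the energy estimate. The one place to be mildly careful is ensuring the measurability hypothesis of Lions' theorem is correctly read off from the $L^1(\Omega;\cL(\C^{m(d+1)}))$-measurability in Definition~\ref{def:coeff-tensor-family}, which is handled by polarization against test pairs $\bmu,\bm{v}\in V$.
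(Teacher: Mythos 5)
Your proposal is correct and is essentially the proof the paper intends: the theorem is quoted from Lions' parabolic Lax--Milgram lemma, and your verification of the boundedness, coercivity and measurability hypotheses on the Gelfand triple $\bmW^{1,2}_\fD(\Omega) \embeds \bmL^2(\Omega) \embeds \bmW^{-1,2}_\fD(\Omega)$, followed by the energy estimate, is exactly the intended route. The only caveat is bookkeeping: your two bounds give $\|\bmu\|_{L^2(J;V)} \leq c^{-1}\|\bm{f}\|$ and $\|\bmu'\|_{L^2(J;V')} \leq c^{-1}(c+\|\tA\|_\infty+\Lambda)\|\bm{f}\|$ with $c = \min(\Lambda-\lambda,\gamma)$, so the literal sum norm of $\bW^{1,2}_0$ picks up an extra additive $c^{-1}$ (and the lower-order $L^2(J;V')$ piece) beyond the displayed constant --- harmless, since the constant is only ever used as a uniform bound, but worth noting if you want to reproduce the stated expression exactly.
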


Recall that for maximal parabolic regularity, we do not care particularly much
about the shift by $\Lambda$ due to Lemma~\ref{lem:maxreg-shifts}, so the
reader may substitute $\Lambda = \lambda + 1$ in their head.

\begin{remark}[Real solutions]
  \label{rem:real-data}
  If the data $(\tA_t)_{t\in J}$ and $\bm{f}$ in Theorem~\ref{t-lions} are in
  fact \emph{real}, then the solution $u$ will also be a
  real-valued function. Indeed, the theorem holds as well when considering the
  subspace $\R$-$\bmW^{1,2}_\fD(\Omega)$ of real-valued functions of
  $\bmW^{1,2}_\fD(\Omega)$ as a real vector space and deriving
  $\R$-$\bmW^{-1,2}_\fD(\Omega)$ from this one. (We refer
  to e.g.~\cite[][Section~3]{Amann95} for the precise
  constructions.) See also Remark~\ref{r-suff}.
  % In fact, Theorem~\ref{t-lions} is stated
  % in~\cite[Section~XVIII.§3]{DL92} as a result in real
  % vector spaces first.
  Since the following
  Theorems~\ref{t-extrapolatsystem},~\ref{t-quasilin},~\ref{thm:quasilinear-2}
  and~\ref{t-quasilin-global} all build upon
  Theorem~\ref{t-lions}, this remark also applies to all of them
  \emph{mutatis mutandis}.
\end{remark}

Of course, the fact that we get a uniform estimate for the solution in
Theorem~\ref{t-lions} is very nice.  Accordingly, let us denote the set of all
coefficient tensor families with given uniform bounds as follows:

\begin{definition}[Uniform coefficient tensor family]\label{def:coeff-tensor-family-uniform}
  Let $\Xi(\lambda,\gamma,M)$ denote the set of all coefficient tensor families
  $(\tA_t)_{t \in J}$ whose associated system operator family
  $(\bmL_t)_{t\in J}$ is uniformly $\lambda$-weakly elliptic with coercivity
  constant $\gamma > 0$ as in~\eqref{eq:ellippt}, and for which
  $\|\tA\|_\infty \leq M$.
\end{definition}

We next upgrade the Theorem~\ref{t-lions} with Sneiberg's extrapolation
principle to the Banach space setting $r,q \neq 2$, which is one of the main
results. Remarkably, there are no additional assumptions.

\begin{theorem} \label{t-extrapolatsystem} Let parameters
  $(\lambda,\gamma,M)$ be given. Then there exist open connected
  intervals $\cI_{t}$ and $\cI_{x}$ with
  $(2,2) \in \cI_{t} \times \cI_{x}$, such that for every $(r,q)
  \in \cI_t \times \cI_x$ we have:
  \begin{enumerate}[(i)]
  \item For every $(\tA_t)_{t\in J}\in \Xi(\lambda,\gamma,M)$, the associated elliptic
    system family $(\bmL_t + \Lambda)_{t \in J}$ satisfies nonautonomous
    maximal $L^r(J;\bmW^{-1,q}_\fD(\Omega))$-regularity for every $\Lambda \in \C$ with
    the common domain $\bmW^{1,q}_\fD(\Omega))$.\label{t-extrapolatsystem-max-reg}
  \item  There is an upper bound on the operator norms of
    \begin{equation*}
      (\partial + \bmL + \Lambda)^{-1} \colon
      L^r(J;\bmW^{-1,q}_\fD(\Omega)) \to \bW^{1,r}_0(J;\bmW^{1,q}_\fD(\Omega),\bmW^{-1,q}_\fD(\Omega))
    \end{equation*}
    which is \emph{uniform} in $(\tA_t)_{t\in J} \in \Xi(\lambda,\gamma,M)$ and $|\Lambda
     - \lambda|$.
  \end{enumerate}
  If the family $(\tA_t)_{t\in J}$ is constant, then $\cI_x$ can be
  chosen such that $\cI_{t} = (1,\infty)$
  in~\ref{t-extrapolatsystem-max-reg}.
  % there is a number $\delta >0$ such that we have the following
  % results:
  % \begin{enumerate}[(1)]
  %   % \item \label{maxparregSyst-1}
  %   %   \begin{equation} \label{eq:isoom} -\nabla \cdot \fA_t \nabla
  %   %     +\lambda:\bmW^{1,q}_\fD \to \bmW^{-1,q}_\fD
  %   %   \end{equation}
  %   %   is a topological isomorphism for all $q \in [2-\delta,2+\delta]$ \\
  %   %   and
  % \item \label{maxparregSyst-2} For every
  %   $q \in [2-\delta, 2+\delta]$, there is an $r(q) >2$ such that the
  %   operator family $(-\nabla \cdot \fA_t \nabla + \lambda)_{t \in J}$
  %   satisfies nonautonomous maximal
  %   $L^r(J;\bmW^{-1,q}_\fD)$-regularity.
  % \item \label{maxparregSyst-2x} There is an $r>2$ such that for all
  %   $q \in [2-\delta, 2+\delta]$, the operator family
  %   $(-\nabla \cdot \fA_t \nabla + \lambda)_{t \in J}$ satisfies
  %   nonautonomous maximal $L^r(J;\bmW^{-1,q}_\fD)$-regularity.
  % \item \label{maxparregSyst-3} If $\fA$ is in fact constant,
  %   then~\ref{maxparregSyst-2} even holds for all
  %   $r \in(1,\infty)$.
  % \end{enumerate}
\end{theorem}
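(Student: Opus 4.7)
The plan is to apply Sneiberg's extrapolation principle (Theorem~\ref{t-sneib}) to the parabolic operator $\partial + \bmL + \Lambda$, anchoring at the Hilbert-space case $(r,q)=(2,2)$ supplied by Lions' theorem (Theorem~\ref{t-lions}). The key preparatory step is to realize the domain spaces $\bW^{1,r}_0(J;\bmW^{1,q}_\fD(\Omega),\bmW^{-1,q}_\fD(\Omega))$ as an interpolation scale compatible with the Bochner target scale $L^r(J;\bmW^{-1,q}_\fD(\Omega))$; for this I would use the negative shifted Laplacian as a reference operator.

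Concretely, pick $q_1 < 2 < q_2$ in $\fI$, which is possible by Proposition~\ref{p-iso-indices-open}. For every $q \in [q_1,q_2] \subset \fI$ and every $r \in (1,\infty)$, Theorem~\ref{thm:laplace-max-reg} together with Remark~\ref{r-invariantr} yields that
\[
  \partial + \bigl(-\bm{\Delta}_q + 1\bigr) \colon \bW^{1,r}_0\bigl(J;\bmW^{1,q}_\fD(\Omega),\bmW^{-1,q}_\fD(\Omega)\bigr) \to L^r\bigl(J;\bmW^{-1,q}_\fD(\Omega)\bigr)
\]
is a topological isomorphism. Combined with Lemma~\ref{l-vectorint} and the classical complex interpolation of Bochner spaces, this identification transfers the bivariate interpolation structure on the target side to the maximal regularity space. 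Continuity of $\partial + \bmL + \Lambda$ between endpoints of the scale is immediate: by Remark~\ref{rem:sys-op} the norm of $\bmL_t$ from $\bmW^{1,q}_\fD(\Omega)$ into $\bmW^{-1,q}_\fD(\Omega)$ is bounded by $\|\tA_t\| \leq M$, so the induced operator norm on the corresponding Bochner spaces is uniform across $\Xi(\lambda,\gamma,M)$, controlled by a function of $M$, $|\Lambda|$, and an embedding constant that is uniform for $q$ in a compact subinterval of $\fI$.

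At the center $(r,q)=(2,2)$ and for real $\Lambda > \lambda$, Theorem~\ref{t-lions} furnishes invertibility with an inverse bound depending only on $(\lambda,\gamma,M,\Lambda-\lambda)$. Feeding the uniform continuity constants and the uniform inverse bound into the quantitative Sneiberg theorem then produces open intervals $\cI_x \ni 2$ and $\cI_t \ni 2$ such that for $(r,q) \in \cI_t \times \cI_x$ the operator $\partial + \bmL + \Lambda$ is a topological isomorphism with a bound on its inverse that is uniform in the prescribed parameters; this rectangle can be obtained either by two successive one-parameter applications (first in $q$ at $r=2$, then in $r$ for $q \in \cI_x$, restricting to a slightly smaller $\cI_x$ if needed for uniformity) or by a single application along an appropriate diagonal scale. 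The passage from real $\Lambda > \lambda$ to arbitrary $\Lambda \in \C$, preserving the uniform bound for $|\Lambda-\lambda|$ bounded, is exactly the content of Corollary~\ref{cor:mpr-shift}.

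In the autonomous case $(\tA_t)$ is a fixed coefficient tensor and $\bmL$ a fixed operator; once maximal $L^r$-regularity is established for some $r \in \cI_t$ (e.g.\ $r=2$ via Lions' theorem directly), Dore's theorem (Remark~\ref{r-invariantr}) upgrades this to maximal $L^s$-regularity for every $s \in (1,\infty)$, and one can take $\cI_t = (1,\infty)$ with only the spatial extrapolation step performed. The central obstacle, and the reason this abstract extrapolation route is necessary in the present nonsmooth systems setting, is establishing that the maximal regularity space $\bW^{1,r}_0$ inherits a clean bivariate interpolation structure with uniformly controlled constants; the Laplacian-based identification via Theorem~\ref{thm:laplace-max-reg} is exactly the device that makes Sneiberg's principle applicable here.
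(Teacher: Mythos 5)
Your proposal is correct and follows essentially the same route as the paper: anchor at Lions' theorem at $(2,2)$, use the shifted Laplacian (Theorem~\ref{thm:laplace-max-reg}) to equip the maximal regularity spaces with a compatible interpolation structure, apply quantitative Sneiberg in two successive one-parameter steps to obtain a full rectangle $\cI_t\times\cI_x$ with uniform inverse bounds, pass to general $\Lambda\in\C$ via Corollary~\ref{cor:mpr-shift}, and invoke Dore's theorem for the constant-coefficient case. The one thing to drop is the parenthetical alternative of ``a single application along an appropriate diagonal scale'': as the paper points out, that would yield the isomorphism property only along a line through $(\tfrac12,\tfrac12)$ in the $(\tfrac1r,\tfrac1q)$-plane rather than a product of intervals, so the two successive one-parameter extrapolations (in either order, with the uniformity of the second range over the first interval checked as you describe) are genuinely needed.
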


Note that we do \emph{not} claim a uniform bound on the operator norm
of the inverse parabolic operator in the case when $(\tA_t)$ is
constant and $\cI_t = (1,\infty)$. We suggest
that such a
property could be established for any compactly contained subinterval
of $(1,\infty)$ on the basis
of~\cite[][Thm.~1.3.6]{AshyralyevSobolevskii1994} and careful renorming
arguments.

\begin{proof}[Proof of Theorem~\ref{t-extrapolatsystem}]
  First of all,
  the linear mapping
  \begin{equation}
    \partial + \bmL + \Lambda\colon
    \bW^{1,r}_0(J;\bmW^{1,q}_\fD(\Omega),\bmW^{-1,q}_\fD(\Omega)) \to
    L^r(J;\bmW^{-1,q}_\fD(\Omega)) \label{eq:para-op-maxreg-space}
  \end{equation}
  is bounded for all $r,q \in(1,\infty)$, and its norm can be estimated by
  $1 + \|\tA\|_\infty + \Lambda$, independently of the
  integrability pair $(r,q)$. Moreover, for the integrability
  pair $(r,q) = (2,2)$ and $\Lambda > \lambda$, the
  operator~\eqref{eq:para-op-maxreg-space} is even continuously
  invertible as established in Theorem~\ref{t-lions}.

  Now choose $\delta > 0$ such that the assertion of
  Theorem~\ref{t-systemregula} is satisfied,
  and such that $(2-\delta,2+\delta) \subseteq \fI$. (Recall
  Definition~\ref{d-isoindex}.) Diminishing $\delta$ just slightly and
  invoking Theorem~\ref{thm:laplace-max-reg},
  \begin{equation*}
    \partial - \bm{\Delta}_q + 1 \colon
    \bW^{1,r}_0(J;\bmW^{1,q}_\fD(\Omega),\bmW^{-1,q}_\fD(\Omega)) \to L^r(J;\bmW^{-1,q}_\fD(\Omega)) 
  \end{equation*}
  and
  \begin{equation*}
    \bmL_t + \Lambda \colon \bmW^{1,q}_\fD(\Omega)
    \to \bmW^{-1,q}_\fD(\Omega) \qquad (t \in J)
  \end{equation*}
  are topological isomorphisms for
  $q \in \cI_\delta \define [2-\delta,2+\delta]$ and any
  $r \in(1,\infty)$. Thus $\bmW^{\pm1,q+\delta}_\fD(\Omega)$ and
  $\bmW^{\pm1,q-\delta}_\fD(\Omega)$ each form a pair of common maximal
  parabolic regularity and the maximal regularity spaces---with
  an equivalent norm induced by the reference operators
  $\partial-\bm{\Delta}_{q\pm\delta} + 1$---can be
  interpolated for
  $(r,q) \in (1,\infty) \times \cI_\delta$,
  see~\cite[Ch.~3]{DtER17}.  Of course we next want to use the
  result of Sneiberg as in Theorem~\ref{t-sneib} to extrapolate
  the isomorphism property for
  $\partial + \bmL + \Lambda$ as
  in~\eqref{eq:para-op-maxreg-space} from the integrability pair
  $(2,2)$ to neighboring pairs $(r,q)$. An na\"{i}ve application
  of Sneiberg's extrapolation principle would yield the desired
  result along a line through $(\frac12,\frac12)$ in the $(\frac1r,\frac1q)$-plane,
  which is not sufficient for our means. We thus proceed as
  follows, using the quantitative bounds in
  Theorem~\ref{t-sneib} %~\ref{p-sneib-2}
  and the %$(r,q)$-uniform
  norm estimate for the
  operators~\eqref{eq:para-op-maxreg-space}:

  First, we extrapolate in time, so with respect to $r$, only, and
  obtain an open connected interval $\cI_t$ containing $2$ such that
  the operators~\eqref{eq:para-op-maxreg-space} are isomorphisms for
  integrability pairs $(r,2)$ with $r \in \cI_t$, with a uniform bound
  on their inverses. Next, for each fixed $r \in \cI_t$, we
  extrapolate from $(r,2)$ in the spatial integrability scale with
  $2\pm\delta$ being the outer anchors. By construction of $\cI_t$,
  and the %$(r,q)$-uniform
  norm bounds on the
  operators~\eqref{eq:para-op-maxreg-space}, the isomorphism ``extrapolation
  range'' with respect to $q$ around each such $(r,2)$ in
  Theorem~\ref{t-sneib} %~\ref{p-sneib-2}
  is also uniform in
  $r \in \cI_t$. The assertion follows. 

  The above proof is rather descriptive, and the devil is
  in the details. We refer to~\cite[Thm.~7.3]{DtER17},
  where those details are worked out explicitly in the case of scalar
  parabolic equations. The arguments transfer quite literally to the
  present case of systems using $-\bm{\Delta}_q+1$ as the reference
  operator, as explained above, so we do not repeat them here but
  content ourselves with the foregoing abstract description.%  Let
  % us however point out that the equivalent norm chosen on the
  % maximal regularity space making them a suitable interpolation
  % scale is in general not uniformly (in $(r,q)$) equivalent to
  % the standard norm chosen in Definition~\ref{d-bochner}, so
  % this uniformity gets lost for the estimate.

  Now, if the family $\tA$ is in fact \emph{constant}, then we need not
  extrapolate in the time integrability $r$. In fact, in this case, we
  first extrapolate in the space integrability $q$, using
  $q \pm \delta$ as outer anchors. This yields an open connected
  interval $\cI_x$ containing $2$ for which we have the maximal regularity
  property for~\eqref{eq:para-op-maxreg-space} for all integrability pairs
  $(2,q)$ for $q \in \cI_x$. But since $\tA$ was assumed to be
  constant, this implies maximal parabolic regularity
  of~\eqref{eq:para-op-maxreg-space} for all integrability pairs
  $(r,q) \in (1,\infty) \times \cI_x$, see
  Remark~\ref{r-invariantr}.

  In any case, having established the maximal parabolic regularity property for
  $\bmL + \Lambda$ for $\Lambda > \lambda$ with the corresponding uniform
  norm estimates, we obtain the same for
  $\bmL + \Lambda$ for \emph{any} scalar $\Lambda \in \C$ by
  virtue of Corollary~\ref{cor:mpr-shift} and the associated uniform estimate.
\end{proof}

\begin{remark}
  \label{rem:extrapolation-always-iso}
  Let us point out that from the construction in the foregoing proof,
  we always have $\cI_x \subseteq \fI$, the set of isomorphism indices
  for the Laplacian, cf.\ Definition~\ref{d-isoindex}, and
  $-\bm{\Delta}_q + 1$ satisfies maximal parabolic regularity on
  $\bmW^{-1,q}_\fD(\Omega)$ with domain $\bmW^{1,q}_\fD(\Omega)$ for
  all $q \in \cI_x$.
\end{remark}

\section{Wellposedness for quasilinear parabolic systems}\label{Snonaut6}

We next consider quasilinear parabolic systems for $\bmu = (u_1,\dots,u_m)$ as
in~\eqref{eq:mainEqQL}. These shall arise from the nonautonomous differential
operators of the type $\bmL$ that is, we assume that for every $\bmu$,
$(\tA(\bmu)_t)_{t\in J}$ is a coefficient tensor family in the sense of
Definition~\ref{def:coeff-tensor-family}, and the associated system operators
as in Definition~\ref{d-systopera} are given by
$\bmL(\bmu) = (L^1(\bmu),\dots,L^m(\bmu))$ with
\begin{multline*}
  \bigl[\bmL(\bmu)^i_t\bigr]\bmu(t) \coloneqq - \sum_{j=1}^m \divv\bigl(\fA^{ij}(\bmu)_t \nabla u_j(t) +
  b^{ij}(\bmu)_t u_j(t)\bigr) \\+\sum_{j=1}^m
  c^{ij}(\bmu)_t\cdot\nabla u_j(t)  + \sum_{j=1}^md^{ij}(\bmu)_tu_j(t).
\end{multline*}

% These will be governed by nonlinear (scalar)
% divergence operators depending on $\bmu$. In order to formalize this into a system
% operator, let $\Psi = (\Psi^{ij})_{i,j=1,\dots,m}$ be a function which
% we imagine to be arrayed in an $(m\times m)$-matrix scheme and which takes
% its values in coefficient tensor families. (Recall
% Definition~\ref{def:coeff-tensor-family}.)

Then the system under consideration is given by
\begin{equation} \label{eq:system}
  \bmu'(t) + \bmL(\bmu)_t\bmu(t)  =
  \Phi(\bmu)(t) \quad \text{in } X, \qquad \bmu(0) = \bmu_0
\end{equation}
for almost all $t \in J$, with a nonlinear but subordinated function
$\Phi$. Of course we want to consider $X=\bmW^{-1,q}_\fD(\Omega)$ and pursue a
maximal parabolic regularity ansatz for this problem, basing on
Theorem~\ref{t-extrapolatsystem} from Section~\ref{Snonaut4} in conjunction
with a fundamental theorem by Amann (Theorem~\ref{t-Amann} below). It will also
become very handy that Theorem~\ref{t-extrapolatsystem} also offers uniform
estimates for the parabolic solution operators for coefficient tensor families
with uniform data.

The critical feature of the above problem formulation will be that we will
allow $\tA$, so also $\bmL$, and $\Phi$ to operate between function spaces on the time-space
cylinder $J \times \Omega$, that is, we allow for \emph{nonlocal-in-time}
operations. (This is also indicated subtly by writing $\bmL(\bmu)_t$ and
$\Phi(\bmu)(t)$ instead of $\bmL(\bmu(t))$ and $\Phi(\bmu(t))$.) In this
context, we will rely on the \emph{Volterra property} for determinism; for an operator $\Psi$
mapping time-dependent functions on $(0,T)$ into time-dependent functions on
$(0,T)$ (with possibly different range spaces), the Volterra property requires
that for any $S \in (0,T]$,
\begin{equation*}
  \Bigl[u(t) = v(t)~\text{f.a.a.}~t \in (0,S)\Bigr] \quad
  \implies \quad \Bigl[\Psi(u)(t) = \Psi(v)(t)~\text{f.a.a.}~t \in (0,S)\Bigr].
\end{equation*}

Now, why care about nonlocal-in-time nonlinear functions $\fA$ and
$\Phi$?  Clearly, such a phenomenon occurs in many interesting
applications, see for example~\cite{HLY19,Ama05,Walker2023} and the
references therein, and this alone would make it worthwhile to
consider. But there is another point of view, at first more subtle,
but in the end rather striking: It allows to cast also parabolic
systems whose stationary counterpart (at initial time) \emph{fails to
  be elliptic} in the sense of Definition~\ref{def:elliptic} in a form
which makes them susceptible to our analysis. To make this plausible,
let us briefly consider a second-order only parabolic
system~\eqref{eq:system} (cf.~\eqref{eq:second-order-only}) in three
functions $\bmu = (u_1,u_2,u_3)$ governed by the tensor
\begin{equation*}
  \widetilde A(\bmu) \define
  \begin{pmatrix}
    A^{1,1}(\bmu) &  A^{1,2}(\bmu) &  A^{1,3}(\bmu) \\
    A^{2,1}(\bmu) &  A^{2,2}(\bmu) &  A^{2,3}(\bmu) \\
    0 & 0 & 1
  \end{pmatrix}
\end{equation*}
with the initial values $\bmu(0) = \bmu_0$. Suppose that the nonlinear
functions act locally-in-time, i.e.,
$\widetilde A(\bmu)_t \simeq \widetilde A(\bmu(t))$. Now, it can happen
that---depending on the magnitude and sign of the coefficient functions---the
tensor at time $t=0$
\begin{equation*}
  \widetilde A(\bmu)\bigr|_{t=0} =
  \begin{pmatrix}
    A^{1,1}(\bmu_0) &  A^{1,2}(\bmu_0) &  A^{1,3}(\bmu_0) \\
    A^{2,1}(\bmu_0) &  A^{2,2}(\bmu_0) &  A^{2,3}(\bmu_0) \\
    0 & 0 & 1
  \end{pmatrix}
\end{equation*}
is \emph{not} elliptic in the sense of Definition~\ref{def:elliptic},
but the \emph{reduced} tensor related to $(u_1,u_2)$ only,
\begin{equation*}
  A(u_1,u_2)\bigr|_{t=0} \define
  \begin{pmatrix}
    A^{1,1}(\bmu_0) &  A^{1,2}(\bmu_0)  \\
    A^{2,1}(\bmu_0) &  A^{2,2}(\bmu_0) 
  \end{pmatrix}
\end{equation*}
\emph{is} so.  This phenomenon occurs for example in particular
realizations of the famous Keller-Segel model in chemotaxis, see
Section~\ref{sec:example} below. But suppose we can solve the
(time-dependent!)  equation for $u_3$ in dependence on the other
functions $(u_1,u_2)$ living on $J\times\Omega$ with a suitably
well defined solution operator $\cS \colon (u_1,u_2) \to
u_3$. Then we can recast the original non-elliptic problem
equivalently using the reduced tensor
\begin{equation*}
  A(u_1,u_2) \define
  \begin{pmatrix}
    A^{1,1}(u_1,u_2,\cS(u_1,u_2)) &  A^{1,2}(u_1,u_2,\cS(u_1,u_2))  \\
    A^{2,1}(u_1,u_2,\cS(u_1,u_2)) &  A^{2,2}(u_1,u_2,\cS(u_1,u_2)) 
  \end{pmatrix}
\end{equation*}
to obtain a problem which is (at the initial time) elliptic and will thus turn
out to be tractable by our results. The eliminated component $u_3$ can then be
recovered using $\cS$. However, the coefficient functions
$(u_1,u_2) \mapsto A^{i,j}(u_1,u_2,\cS(u_1,u_2))$ are \emph{nonlocal-in-time} since
$\cS$ is so, even when the $A^{i,j}$ were not! This observation shall serve as
another motivation to consider nonlinear system operators with a
nonlocal-in-time dependence on the functions. Of course, an analogous
modification has to be introduced into the right-hand sides $\Phi$ which is why
we also assume these to act nonlocally-in-time.

It is also clear that the previous explanations do not depend on the exact
format with three equations, of which we remove one, at all. More generally one
can have a system of $m+k$ equations and we implicitly solve $k$ of them,
resulting in a modified system of $m$ equations; this may in particular include
the most convenient case $m=1$ for which one even obtains just a scalar
equation from this reduction, albeit a complicated one. However, this is not
always possible, which is why we establish our results for \emph{systems}. An
example will be given in Section~\ref{sec:example} after the main theorems.

\subsection{Existence and uniqueness results}

% \todo{Ab hier reell? WOZU}
The pioneering theorem which allow us to carry out for the foregoing
agenda was established by Amann~(\cite[Thm.~2.1]{Ama05b}) and gives a
comprehensive framework for solving abstract quasilinear problems, in
particular including nonlocal-in-time operations.

\begin{theorem}[Amann] \label{t-Amann} Let
  $r \in(1,\infty)$ and suppose that $D$ and $X$ are
  Banach spaces with dense embedding $D \embeds X$ such that
  there exists an operator satisfying maximal parabolic
  regularity on $X$ with domain $D$. Consider the abstract
  problem
  \begin{equation}\label{eq:amann-abstract}
    u'(t) +\bigl[\cA(u)(t)\bigr]u(t) =F(u)(t) \quad \text{in } X, \qquad u(0)=u_0,
  \end{equation}
  and assume the following:
  \begin{enumerate}[(1)]
  \item \label{t-Amann-evil} The operator $\cA$ maps $\bW^{1,r}(J;D,X)$
    into $L^\infty(J;\cL(D \to X))$ and it is Lipschitz continuous on bounded
    subsets of $\bW^{1,r}(J;D,X)$.
  \item For each $u \in \bW^{1,r}(J;D,X)$ and every $S \in (0,T]$
    the operator family $\cA(u)|_{(0,S)}$ satisfies nonautonomous
    maximal parabolic regularity on $L^r(0,S;X)$ with respect to the
    common domain $D$.
  \item The function $F$ maps $\bW^{1,r}(J;D,X)$ into $L^r(J;X)$ and
    $F-F(0)$ is Lipschitz continuous on bounded subsets of
    $\bW^{1,r}(J;D,X)$ mapping into $L^s(J;X)$ for some $s \in (r,\infty]$.
  \item Both $\cA$ and $F$ have the Volterra property.
    % \emph{Volterra maps}, that is, for
    % every $S \in(0,T]$:
    % \begin{equation*}u|_{(0,S)}=v|_{(0,S)} \quad \implies \quad
    %   (A(u),F(u))|_{(0,S)}=(A(v),F(v))|_{(0,S)}\end{equation*}
  \item The initial value satisfies
    $u_0 \in (X,D)_{1-\frac {1}{r},r}$.
  \end{enumerate}
  Then there is a (maximal) interval
  $J_\bullet\define (0,S_\bullet) \subseteq J$ such that the
  equation~\eqref{eq:amann-abstract} admits a unique solution $u$ on
  $J_\bullet$ which belongs to the maximal regularity space
  $\bW^{1,r}(I;D,X)$ for every $I = (0,S)$ with $S \in J_\bullet$.
\end{theorem}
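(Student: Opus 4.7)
The plan is to construct the solution by a Banach fixed-point argument in the maximal parabolic regularity space on a sufficiently short subinterval $(0,S) \subseteq J$, and then to extend it along increasing subintervals up to a maximal interval $J_\bullet$. The Volterra property of $\cA$ and $F$ in hypothesis~(4) is precisely what makes both the restriction to shorter intervals meaningful and the continuation consistent.

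First I would pick a reference extension $\bar u \in \bW^{1,r}(J;D,X)$ of $u_0$, provided by Remark~\ref{rem:trace-extend} applied to the reference operator of maximal parabolic regularity postulated in the hypothesis; its norm is controlled by $\|u_0\|_{(X,D)_{1-1/r,r}}$. For $R>0$ and $S\in(0,T]$ I would then consider the closed ball
\begin{equation*}
  \mathcal{B}_{S,R} \define \bigl\{ v \in \bW^{1,r}(0,S;D,X) \colon v(0) = u_0,~ \|v-\bar u|_{(0,S)}\|_{\bW^{1,r}(0,S;D,X)} \leq R\bigr\}.
\end{equation*}
For each $v \in \mathcal{B}_{S,R}$, hypothesis~(2) ensures that $\cA(v)|_{(0,S)}$ satisfies nonautonomous maximal parabolic $L^r(0,S;X)$-regularity with common domain $D$, so Lemma~\ref{lem:inhomo-solution} produces a unique $u \enifed \Psi(v) \in \bW^{1,r}(0,S;D,X)$ solving $u'(t) + [\cA(v)(t)]u(t) = F(v)(t)$ with $u(0)=u_0$. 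By the Volterra property, a fixed point of $\Psi$ in $\mathcal{B}_{S,R}$ is precisely a solution of~\eqref{eq:amann-abstract} on $(0,S)$.

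The main obstacle is to show that, for a suitably fixed $R$ and all small enough $S$, the map $\Psi$ sends $\mathcal{B}_{S,R}$ to itself and is a strict contraction. This requires a bound on $\|(\partial+\cA(v))^{-1}\|_{L^r(0,S;X) \to \bW^{1,r}_0(0,S;D,X)}$ that is uniform in $v \in \mathcal{B}_{S,R}$ and in small $S$; this uniformity follows from the Lipschitz bound on $\cA$ in hypothesis~(1), a standard perturbation argument for maximal parabolic regularity around $\cA(\bar u)$ (using that the perturbation has small $L^\infty$-in-time norm on short intervals), and the uniform short-interval estimates of Lemma~\ref{lem:maxreg-shifts}. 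Given this, the self-map property reduces to estimating $\Psi(v)-\bar u$ via the equation it solves, whose right-hand side $F(v) - \bar u' - \cA(v)\bar u$ splits into $F(0)$, $F(v)-F(0)$, $\bar u'$, and $\cA(v)\bar u$; each term has $L^r(0,S;X)$-norm that tends to zero as $S\downarrow 0$, either because it is a fixed function whose $L^r$-norm over shrinking intervals vanishes, or, for $F(v)-F(0)$, because hypothesis~(3) provides bounds in $L^s$ for some $s>r$, which by H\"older's inequality in time yields the factor $S^{1/r-1/s}$.

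For the contraction estimate I would take $v_1,v_2 \in \mathcal{B}_{S,R}$ and observe that $u_1-u_2=\Psi(v_1)-\Psi(v_2)$ solves a linear parabolic problem with zero initial value and right-hand side $(F(v_1)-F(v_2)) - (\cA(v_1)-\cA(v_2))u_2$. The Lipschitz assumptions in~(1) and~(3), the embedding $\bW^{1,r}(0,S;D,X) \embeds L^\infty(0,S;(X,D)_{1-1/r,r})$ from Lemma~\ref{l-maxparregfacts} used to keep $u_2$ under uniform control, and again the factor $S^{1/r-1/s}$ coming from~(3), combine with the uniform maximal regularity bound to give
\begin{equation*}
  \|u_1-u_2\|_{\bW^{1,r}(0,S;D,X)} \leq C(R)\,\eta(S)\,\|v_1-v_2\|_{\bW^{1,r}(0,S;D,X)}
\end{equation*}
with $\eta(S)\to 0$ as $S\downarrow 0$. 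Banach's fixed-point theorem then yields a unique local solution on some $(0,S_0)$. Uniqueness globally and the existence of a maximal interval $J_\bullet=(0,S_\bullet)$ follow from the Volterra property: any two solutions on overlapping intervals agree on the common part, so the union of all intervals of existence is itself one, and standard continuation using Lemma~\ref{lem:extension} together with reapplication of the local argument at interior times completes the construction.
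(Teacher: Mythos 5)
This is Theorem~2.1 of Amann's paper \cite{Ama05b}, which the present paper quotes without proof, so there is no in-paper argument to compare against; your sketch is a faithful reconstruction of the standard contraction-mapping proof that Amann himself gives (freeze the coefficients at $v$, solve the linear problem via nonautonomous maximal regularity, contract on a short initial interval, continue using the Volterra property). You also correctly identify the role of the hypothesis that $F-F(0)$ is Lipschitz into $L^s$ with $s>r$: the factor $S^{1/r-1/s}$ is exactly the ``fixable shortcoming'' of the original formulation alluded to in Remark~\ref{rem:amann}, and your self-map and contraction estimates use it in the right places. Two technical points are only implicit in your write-up but are genuinely needed: since $\cA$ and $F$ are defined on $\bW^{1,r}(J;D,X)$, an element $v\in\mathcal{B}_{S,R}$ must first be extended to all of $J$ by $E_S$ from Lemma~\ref{lem:extension} before $\cA(v)$ makes sense, and the Volterra property is what guarantees that the resulting operator family on $(0,S)$ is independent of the choice of extension.

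The one step whose justification as written is wrong is the uniformity of $\|(\partial+\cA(v))^{-1}\|$ over $v\in\mathcal{B}_{S,R}$. You attribute the smallness of the perturbation $\cA(v)-\cA(\bar u)$ in $L^\infty(0,S;\cL(D\to X))$ to the shortness of the interval, but hypothesis~(1) only gives $\|\cA(E_Sv)-\cA(\bar u)\|_{L^\infty(0,S;\cL(D\to X))}\lesssim L\,\|v-\bar u\|_{\bW^{1,r}(0,S;D,X)}\leq L\,C_E\,R$, and this bound does not shrink as $S\downarrow 0$ for fixed $R$: an $L^\infty$-in-time bound obtained from a Lipschitz estimate does not improve by restricting to a shorter interval. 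The correct mechanism is to choose the radius $R$ small first, so that $\cA(\mathcal{B}_{S,R})$ lies in a small $L^\infty(J;\cL(D\to X))$-neighbourhood of $\cA(\bar u)$ on which a Neumann-series perturbation of $\partial+\cA(\bar u)$ (whose maximal-regularity constants on $(0,S)$ are uniform in $S$ by zero-extension of the right-hand side) yields uniformly bounded inverses; only then does one shrink $S$ to obtain the self-map and contraction properties. With that reordering the rest of your argument goes through: in the contraction estimate the term $(\cA(v_1)-\cA(v_2))u_2$ is harmless because $u_2=\Psi(v_2)$, so $\|u_2\|_{L^r(0,S;D)}\leq\|\Psi(v_2)-\bar u\|_{\bW^{1,r}(0,S;D,X)}+\|\bar u\|_{L^r(0,S;D)}$ does tend to zero with $S$ once the self-map estimate is in place.
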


\begin{remark} \label{rem:amann}
  \begin{enumerate}[(i)]
  \item It is known that the Volterra property is often an essential assumption
    when dealing with nonlocal-in-time problems, both from a modeling and an
    analytical point of view, see e.g.~\cite[Ch.~V]{Gajewski74}.  We thus
    expect that the foregoing result by Amann is quite optimal in its
    essence. The reader is moreover advised to consult~\cite[Thm.~3.1]{Ama05}
    for comments on the result by its inventor and a (fixable) shortcoming in
    the proof in~\cite{Ama05b}. (This issue is already taken care of in the
    formulation of Theorem~\ref{t-Amann}.)
  \item From the very general assumptions of Theorem~\ref{t-Amann} it is clear
    that one cannot expect to obtain global-in-time solutions at all, as there
    is neither a global growth bound on the nonlinearities nor some kind of
    sign condition or monotonicity assumed. Thus we must content with a
    (maximal) local-in-time solution. See Theorem~\ref{t-quasilin-global} below
    for a global result in the $\bmW^{-1,q}_\fD(\Omega)$-setting under a
    natural sublinear growth condition.
  \item In connection with the previous point, the concepts of nonlocal-in-time
    problems and local-in-time solutions may seem formally conflicting. The
    correct way to interpret this clash of concepts is based on the
    Volterra property, and one might say that this is precisely what the
    Volterra property is there for: Let $u$ be the unique local-in-time
    solution to~\eqref{eq:amann-abstract} on $(0,S) \subseteq J$ with
    $u \in \bW^{1,r}(0,S;D,X)$. Consider the extension
    $v\define E_Su \in \bW^{1,r}(J;D,X)$ of $u$ to $J$ as in
    Lemma~\ref{lem:extension}. Then, with the assumptions of
    Theorem~\ref{t-Amann}, the problem
    \begin{equation*}
      w'(t) +[\cA(v)(t)]w(t) =F(v)(t) \quad \text{ in } X, \qquad w(0)=u_0
    \end{equation*}
    admits a unique solution $w \in \bW^{1,r}(J;D,X)$, see
    Lemma~\ref{lem:inhomo-solution}. Since $v$ and $u$ coincide on $(0,S)$, the
    Volterra property of $\cA$ and $F$ implies that $(\cA(v),F(v))$ and
    $(\cA(u),F(u))$ also coincide on $(0,S)$. From uniqueness we infer that $w$
    and $u$ are equal on $(0,S)$, and this is the way we interpret a
    local-in-time solution to a nonlocal-in-time problem with the Volterra
    property.\label{rem:amann-local}
  \item In the same way as in~\ref{rem:amann-local}, additionally shifting in
    time, one observes that the solution $u$ on $(0,S) \subseteq J$ supplied by
    Theorem~\ref{t-Amann} is also the unique solution to
    \begin{equation*}
      w'(t) +[\cA(w)(t)]w(t) =F(w)(t) \quad \text{on } (\tau,S) \text{ in } X, \qquad w(\tau)=u(\tau)
    \end{equation*}
    for every $\tau \in (0,S)$. (This is also explicitly done in the proof of
    Theorem~\ref{t-quasilin-global}.) See also Lemma~\ref{lem:maxreg-shifts}.
  \end{enumerate}
\end{remark}

% We briefly state a useful lemma before giving another remark
% regarding Theorem~\ref{t-Amann}.

% \begin{lemma}
%   \label{lem:extension}
%   Let $r \in(1,\infty)$ and suppose that $D$ and $X$
%   are Banach spaces with dense embedding $D \embeds X$ such that
%   there exists an operator satisfying maximal parabolic
%   regularity on $X$ with domain $D$. Let an interval $I =
%   (0,\theta)$ be given. Then we have the
%   following:
% %  \begin{enumerate}[(a)]
%   \item For every $S \in I$ there exists a continuous linear
%     extension operator $E_S$ such that
%     \begin{equation*}
%       E_S \colon \bW^{1,r}(0,S;D,X) \to \bW^{1,r}(0,\theta;D,X), \qquad
%       (E_Su)|_{(0,S)} = u.
%     \end{equation*}
%     The extension $E_Su$ is given by the unique solution to
%     \begin{equation*}
%       v' + Bv = f, \quad v(0) = 0, \qquad f(t) \define
%       \begin{cases}
%         u'(t) + Bu(t) & \text{for } t \in
%         (0,S),\\ 0 & \text{for } t \in
%         (S,\theta).
%       \end{cases}
%     \end{equation*}
%   and its norm is uniformly bounded 
% \end{lemma}

We next employ the foregoing theorem to obtain sharp existence- and
uniqueness results for~\eqref{eq:system} posed in a negative
Sobolev space $\bmW^{-1,q}_\fD(\Omega)$.
To this end, let us first establish an auxiliary result; we formulate it rather
generally for possible further uses. It is used to remove a cut-off procedure
in the proof of the main theorem which allows us to rely on a G{\aa}rding
inequality assumption for the initial value only. We use the trace-extension
defined in Lemma~\ref{lem:inhomo-solution}, cf.\ Remark~\ref{rem:trace-extend}.
% For this, and further on, let for
% $ w_0 \in X$ denote the constant function $t \mapsto w_0$ by $\overline{w_0}$.

\begin{lemma} \label{lem:hilfs2} Let $r \in(1,\infty)$
  and suppose that $D$ and $X$ are Banach spaces with dense
  embedding $D \embeds X$ such that there exists an operator
  $\cA$ satisfying maximal parabolic regularity on $X$ with
  domain $D$. Let further $Z$ be another Banach space and
  suppose that $\Psi\colon \bW^{1,r}(J;D,X) \to L^\infty(J;Z)$ is a
  continuous Volterra map.  Let further $w \in \bW^{1,r}(J;D,X)$ and
  let $\overline{w_0} \in \bW^{1,r}(J;D,X)$ be the extension of
  $w(0)$. Suppose that $\Psi(\overline{w_0})$ is continuous at
  zero. Then, for every $\eps >0$, there
  exists a $\delta >0$ such that
    \begin{equation*}
      \Bigl\|\bigl[\Psi(w)\bigr](t) -\bigl[\Psi(\overline{w_0})\bigr](0)\Bigr\|_{Z}
      \le \eps % \quad \text{for all}~
      \qquad (\text{a.e.}~t \in [0,\delta]).
  \end{equation*}
\end{lemma}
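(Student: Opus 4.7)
The plan is to decompose via triangle inequality and control each piece separately:
\begin{equation*}
\bigl\|[\Psi(w)](t)-[\Psi(\overline{w_0})](0)\bigr\|_{Z}
\leq \bigl\|[\Psi(w)](t)-[\Psi(\overline{w_0})](t)\bigr\|_{Z} + \bigl\|[\Psi(\overline{w_0})](t) - [\Psi(\overline{w_0})](0)\bigr\|_{Z}.
\end{equation*}
The second summand is directly bounded by $\eps/2$ on some small interval $[0,\delta_0]$ by the assumed continuity of $\Psi(\overline{w_0})$ at zero. All the substance lies in the first summand, which I plan to handle by combining the continuity of $\Psi$ (on $\bW^{1,r}(J;D,X)$ into $L^\infty(J;Z)$) with the Volterra property.

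The key device is the extension operator of Lemma~\ref{lem:extension} associated with the reference operator $\cA$. For $S \in J$, set $w_S \coloneqq E_S(w|_{(0,S)}) \in \bW^{1,r}(J;D,X)$. By construction $w_S|_{(0,S)} = w|_{(0,S)}$, so the Volterra property yields $\Psi(w) = \Psi(w_S)$ on $(0,S)$. Hence once we have identified a suitable small $S_0$ for which $\Psi(w_{S_0})$ is close to $\Psi(\overline{w_0})$ in $L^\infty(J;Z)$, we obtain a pointwise (a.e.) bound on the first summand for $t \in (0,S_0)$ and can conclude by picking $\delta \coloneqq \min(S_0,\delta_0)$.

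The main step, and the only nontrivial one, is therefore to establish that
\begin{equation*}
w_S \longrightarrow \overline{w_0} \quad \text{in } \bW^{1,r}(J;D,X) \quad \text{as } S \to 0^+,
\end{equation*}
after which the continuity of $\Psi$ supplies the required $S_0$. To see this, recall that $w_S$ solves $u'+\cA u = \chi_{(0,S)}[w'+\cA w]$ in $X$ with $u(0)=w(0)$, while $\overline{w_0}$—as constructed via Lemma~\ref{lem:inhomo-solution}, cf.\ Remark~\ref{rem:trace-extend}—solves the homogeneous analogue with the same initial value. The difference thus lies in $\bW^{1,r}_0(J;D,X)$ and maximal parabolic regularity of $\cA$ gives
\begin{equation*}
\|w_S - \overline{w_0}\|_{\bW^{1,r}(J;D,X)} \lesssim \|\chi_{(0,S)}[w'+\cA w]\|_{L^r(J;X)},
\end{equation*}
and the right-hand side vanishes as $S \to 0^+$ by dominated convergence, since $w'+\cA w \in L^r(J;X)$.

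The main obstacle is really this continuity of $S \mapsto w_S$ at $S=0$; once it is in place, the Volterra identity $\Psi(w) = \Psi(w_{S_0})$ on $(0,S_0)$ and the two $\eps/2$-bounds from continuity of $\Psi$ and continuity of $\Psi(\overline{w_0})$ at zero combine to finish the proof for a.e.\ $t \in [0,\delta]$.
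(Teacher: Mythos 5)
Your proposal is correct and follows essentially the same route as the paper's proof: the same triangle-inequality decomposition, the same use of the extension operator $E_S$ together with the Volterra property to replace $\Psi(w)$ by $\Psi(w_S)$ on $(0,S)$, and the same maximal-regularity argument identifying $w_S-\overline{w_0}$ as the solution of $u'+\cA u=\chi_{(0,S)}[w'+\cA w]$ with $u(0)=0$ to conclude $w_S\to\overline{w_0}$ as $S\downarrow 0$.
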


\begin{proof}%[Proof of Lemma~\ref{lem:hilfs2}]
  Let $\eps > 0$ and
  $\delta \in(0,T)$. Set $w_\delta \define
  E_T(w|_{(0,\delta)})$, the extension of
  $w|_{(0,\delta)} \in  \bW^{1,r}(0,\delta;D,X)$ to $\bW^{1,r}(J;D,X)$ as
  defined in Lemma~\ref{lem:extension}. By
  the Volterra property of $\Psi$ it follows that
  $\Psi(w) =\Psi( w_\delta)$ on $(0,\delta)$. We thus estimate, for
  $t \in (0,\delta)$:
  \begin{multline}\label{eq:volterra-estimate}
    \Bigl\|\bigl[\Psi(w)\bigr](t) -\bigl[\Psi
    (\overline{w_0})\bigr](0)\Bigr\|_{Z} =
    \Bigl\|\bigl[\Psi(w_\delta)\bigr](t) -\bigl[\Psi
    (\overline{w_0})\bigr](0)\Bigr\|_{Z} \\ \leq
   \Bigl\|\bigl[\Psi(w_\delta)\bigr](t) -\bigl[\Psi
   (\overline{w_0})\bigr](t)\Bigr\|_{Z}
   +
      \Bigl\|\bigl[\Psi
   (\overline{w_0})\bigr](t) -\bigl[\Psi (\overline{w_0})\bigr](0)\Bigr\|_{Z}
 \end{multline}
 Since $\Psi(\overline{w_0})$ is continuous at zero, we can choose $\delta$
 small enough that the second term in the foregoing
 inequality~\eqref{eq:volterra-estimate} is less or equal to $\eps/2$ for all
 $t \in (0,\delta)$. For the first term, we show that
 $w_\delta \to \overline{w_0}$ in $\bW^{1,r}(J;D,X)$ as $\delta \downarrow 0$. Then, by continuity of
 $\Psi$ mapping into $L^\infty(J;Z)$, the first term
 in~\eqref{eq:volterra-estimate} can also be made less or equal to $\eps/2$ by
 possibly diminishing $\delta$ further, uniformly for almost all
 $t \in (0,\delta)$, and the proof is complete.

 So, in order to prove $w_\delta \to \overline{w_0}$, 
 observe that $w_\delta-\overline{w_0}$ is the unique solution to
 \begin{equation*}
   u'(t) + \cA u(t) = f(t), \quad \text{in } X, \qquad u(0) = 0, \qquad f(t)
   \define
   \chi_{(0,\delta)}(t)\bigl[w'(t) + \cA w(t) \bigr].
   % \begin{cases}
   %   w'(t) + \cA w(t) & (t \in
   %   (0,\delta)),\\ 0 & (t \in
   %   (\delta,T)).
   % \end{cases}
 \end{equation*}
 Hence, by maximal regularity of $\cA$,
 %and
 %Lemma~\ref{lem:inhomo-solution},
 \begin{equation*}
   % \|w_\delta-\overline{w_0}\|_{\bW^{1,r}(J;D,X)} & \leq
   % \bigl\|(\partial + \cA)^{-1}\bigr\|\,
   % \|f\|_{L^r(J;X)} \\ & = \bigl\|(\partial + \cA)^{-1}\bigr\|\,\|f\|_{L^r(0,\delta;X)} \quad
   % \xrightarrow{~\delta\downarrow0~} \quad 0
   \|w_\delta-\overline{w_0}\|_{\bW^{1,r}(J;D,X)}  \leq
   \bigl\|(\partial + \cA)^{-1}\bigr\|_{L^r(J;X) \to \bW_0^{1,r}(J;D,X)}\|f\|_{L^r(0,\delta;X)} \quad
   \xrightarrow{~\delta\,\downarrow\,0~} \quad 0
 \end{equation*}
 and this implies the claim.
 % \begin{equation*}
 %   \bigl\|w_\delta - \overline{w_0}\bigr\|_{\bW^{1,r}(0,\delta;D,X)} =
 %   \|w-S(\cdot)w_0\|_{\bW^{1,r}(0,\delta;D,X)} \quad \xrightarrow{~\delta
 %     \downarrow 0~} \quad 0.
 % \end{equation*}
 % (This part does not use that $w_\delta(0) = w_0$.) For $s\in[\delta,T]$, we have
 % \begin{equation*}
 %   w_\delta(s) - \overline{w_0}(s) =
 %   S(s-\delta)w(\delta) - S(s)w_0 = S(s-\delta)\bigl[w(\delta)-S(\delta)w_0\bigr],
 %  \end{equation*}
 %  so, since we can estimate the extension norm uniformly in
 %  $\delta$ by a constant $\|\cE\|$,
 %  \begin{equation*}
 %    \bigl\|w_\delta - \overline{w_0}\bigr\|_{\bW^{1,r}(\delta,T;D,X)} \leq
 %    \|\cE\| \, \|w(\delta) - S(\delta)w_0\|_{(X,D)_{1-1/r,r}}\quad \xrightarrow{~\delta
 %     \downarrow 0~} \quad 0,
 % \end{equation*}
 % see~\eqref{eq:embedcont}.  This implies the claim.
\end{proof}

\begin{remark}
  Note that the assumption in Lemma~\ref{lem:hilfs2} regarding the
  operator $\cA$ is always fulfilled for
  \begin{equation*}
    X = \bmW^{-1,q}_\fD(\Omega), \qquad D = \bmW^{1,q}_\fD(\Omega), \qquad \cA = -\bm{\Delta}_q+1,
  \end{equation*}
  if $q \in \fI$. This follows from Theorem~\ref{thm:laplace-max-reg};
  see also Remark~\ref{r-invariantr}. Thus, for every
  $r \in (1,\infty)$ and every
  $\bmu_0 \in
  \bigl(\bmW^{-1,q}_\fD(\Omega),\bmW^{1,q}_\fD(\Omega)\bigr)_{1-\frac
    {1}{r},r}$, we have an extension
  \begin{equation*}
    \overline{\bmu_0} \in
    \bW^{1,r}(J;\bmW^{1,q}_\fD(\Omega),\bmW^{-1,q}_\fD(\Omega))
  \end{equation*}
  of $\bmu_0$ as supplied by Lemma~\ref{lem:inhomo-solution} and
  Remark~\ref{rem:trace-extend}.\label{rem:semigroup-laplacian}
\end{remark}

We will make use the foregoing Remark~\ref{rem:semigroup-laplacian} in the
second main result of this work which we next formulate in its first
version. There, we will also use $\bm{L}^q(\Omega) \define L^q(\Omega)^m$.

\textbf{Convention.}\,\,Since the domain $\Omega$ is kept fixed, we
often omit reference to $\Omega$ in the following results to remove
visual clutter where convenient and no confusion seems likely.  For
example, we then write $\bmW^{1,q}_\fD$ instead of
$\bmW^{1,q}_\fD(\Omega)$.

\begin{theorem}[Quasilinear, first version] \label{t-quasilin} Let $\lambda \geq 0$, $\gamma>0$
  and $M \geq 0$ be parameters and let $\cI_t \times \cI_x$ be
  the connected intervals from Theorem~\ref{t-extrapolatsystem}
  associated to the set $\Xi(\lambda,\gamma/2,M+1)$. Suppose
  that for some $(r,q) \in \cI_t \times \cI_x$ the following
  assumptions hold true:
  \begin{enumerate}[(i)]
  % \item $-\bm{\Delta}_q+1$ satisfies maximal
  %   parabolic regularity on $\bmW^{-1,q}_\fD$ with domain $\bmW^{1,q}_\fD$.
  \item
    $\Phi \colon \bW^{1,r}(J;\bmW^{1,q}_\fD,\bmW^{-1,q}_\fD) \to
    L^r(J;\bmW^{-1,q}_\fD)$ is a Volterra map and $\Phi-\Phi(0)$
    is Lipschitz continuous on bounded sets mapping into
    $L^s(J;\bmW^{-1,q}_\fD)$ for some
    $s \in (r,\infty]$.
  \item For every
    $\bmu \in \bW^{1,r}(J;\bmW^{1,q}_\fD,\bmW^{-1,q}_\fD)$,
    $(\tA(\bmu)_t)_{t\in J}$ is a coefficient tensor family such
    that
    \begin{equation*}
      \tA \colon \bW^{1,r}(J;\bmW^{1,q}_\fD,\bmW^{-1,q}_\fD) \to
      L^\infty\bigl(J;L^\infty(\Omega;\cL(\C^{m(d+1)}))\bigr)
    \end{equation*}
    is a Volterra map and Lipschitz continuous on bounded sets.
  \item The initial value satisfies
    $\bmu_0 \in \bigl(\bmW^{-1,q}_\fD,\bmW^{1,q}_\fD\bigr)_{1-\frac
      {1}{r},r}$. Further, $\tA(\overline{\bmu_0})$ is
    continuous at zero with values in $L^\infty(\Omega;\cL(\C^{m(d+1)}))$
    % \begin{equation*}
    %   \Psi^{ij}(\overline{\bmu_0}) \in
    %   C(\overline J;L^\infty(\Omega;\cL(\C^{d}))) \qquad (i,j=1,\dots,m)
    % \end{equation*}
    with
    the extension  $\overline{\bmu_0}$
    of $\bmu_0$. %to $\bW^{1,r}(J;\bmW^{1,q}_\fD,\bmW^{-1,q}_\fD)$.
  \item The system operator $\bmL_0$ associated to $\tA_0 \define
    \bigl[\tA(\overline{\bmu_0})\bigr](0)$ is
    $\lambda$-weakly elliptic in the sense of
    Definition~\ref{def:elliptic} with coercivity constant $\gamma$. Moreover,
    $\|\tA_0\| \leq M$.
  \end{enumerate}
  Then %for every  $\Lambda \in \C$
  % , there is $q>2$, such that if
  % $r > 2 \bigl ( 1-\frac {2}{q}\bigr )^{-1}$, then
  the equation
  \begin{equation} \label{eq:gleidh} \bmu'(t) + \bmL(\bmu)_t\bmu(t) =\Phi(\bmu)(t) \quad \text{in } \bmW^{-1,q}_\fD, \qquad \bmu(0) = \bmu_0 
  \end{equation}
  admits a unique solution $\bmu$ on an interval
  $I=(0,S)\subseteq J$ with the regularity
  \begin{equation*}
    \bmu \in %  W^{1,r}(I;\bmW^{-1,q}_\fD) \cap L^r(I;\bmW^{1,q}_\fD)
    % =
    \bW^{1,r}(I;\bmW^{1,q}_\fD,\bmW^{-1,q}_\fD).
  \end{equation*}
  Moreover, there exists a constant $C$ which depends only on the
  parameters $(\lambda,\gamma,M)$ such that for
  every $0 \leq \tau < \sigma \leq S$,
  \begin{equation*}
    \|\bmu\|_{\bW^{1,r}(\tau,\sigma;\bmW^{1,q}_\fD,\bmW^{-1,q}_\fD)} \leq
    C\Bigl[\|\Phi(\bmu)\|_{L^r(\tau,\sigma;\bmW^{-1,q}_\fD)} + \|\bmu(\tau)\|_{(\bmW^{-1,q}_\fD,\bmW^{1,q}_\fD)_{1-\frac
        {1}{r},r}}\Bigr].
  \end{equation*}
  If in fact $r > 2$, then $u \in C^\alpha(\overline I;\bm{L}^q(\Omega))$
  for some $\alpha > 0$.
  % In particular, $\bmu \in
  % C^\alpha(\overline{I\times\Omega};\C^m)$ for some $\alpha >0$.
  % if
  % $\Phi:\mr(J;\bmW^{1,q}-\fD,\bmW^{-1,q}_\fD) \to
  % L^s(J;\bmW^{-1,q}_\fD)$ is a locally Lipschitzian mapping for an
  % $s >r$.
\end{theorem}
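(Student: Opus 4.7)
The plan is to apply Amann's theorem (Theorem~\ref{t-Amann}) with the choices $X = \bmW^{-1,q}_\fD$, $D = \bmW^{1,q}_\fD$, $\cA = \bmL$, and $F = \Phi$. By Remark~\ref{rem:extrapolation-always-iso} one has $q \in \cI_x \subseteq \fI$, so $-\bm{\Delta}_q + 1$ satisfies maximal parabolic regularity on $X$ with domain $D$ by Theorem~\ref{thm:laplace-max-reg} and serves as the reference operator Amann requires. Amann's hypotheses (1), (3)--(5) translate directly from assumptions (i)--(iii): the linear assignment $\tA \mapsto \bmL$ has operator norm at most one (Remark~\ref{rem:sys-op}), so the Lipschitz and Volterra properties transfer from $\tA$ to $\bmL$. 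The nontrivial step is Amann's hypothesis (2), which asks that for every $\bmu \in \bW^{1,r}(J;D,X)$ and every $S \in (0,T]$, the family $(\bmL(\bmu)_t)_{t \in (0,S)}$ satisfies nonautonomous maximal $L^r$-regularity with common domain $D$. Theorem~\ref{t-extrapolatsystem} would deliver this once the tensor family belongs to a uniform ellipticity class $\Xi$, but hypothesis (iv) only supplies ellipticity of the frozen initial tensor $\tA_0$.

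To bridge this gap I would introduce a Volterra-compatible cut-off. Applying Lemma~\ref{lem:hilfs2} to $\Psi = \tA$ and $w = \overline{\bmu_0}$---with target space $Z = L^\infty(\Omega;\cL(\C^{m(d+1)}))$ by assumption (ii)---produces $\delta_0 > 0$ such that $\|\tA(\overline{\bmu_0})_t - \tA_0\|_Z < \gamma/4$ for a.e.\ $t \in (0, \delta_0)$. Combining this with the Lipschitz continuity of $\tA$ on bounded sets yields a closed $\bW^{1,r}$-ball $B$ around $\overline{\bmu_0}$ on which $\|\tA(\bmu)_t - \tA_0\|_Z < \min(\gamma/2, 1)$ for all $\bmu \in B$ and $t \in (0, \delta_0)$. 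I would then construct a Volterra-preserving smooth retraction $\chi \colon \bW^{1,r}(J;D,X) \to B$ that fixes $B$ and preserves initial traces, for instance by splicing a spatial thresholding at the boundary of $B$ with the causal extension operator $E_S$ of Lemma~\ref{lem:extension}---this step is the main technical hurdle, since a naive $\bW^{1,r}$-projection is not time-causal and hence not Volterra. Setting $\tA^\chi(\bmu) \define \tA(\chi(\bmu))$ produces a Volterra map, Lipschitz on bounded sets, for which $\tA^\chi(\bmu)_t \in \Xi(\lambda + \gamma/2, \gamma/2, M+1)$ uniformly in $\bmu$ and $t \in (0, \delta_0)$ by Remark~\ref{r-stable}. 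Since the Sneiberg extrapolation intervals from Theorem~\ref{t-extrapolatsystem} depend only on $\gamma$, $M$, and $|\Lambda-\lambda|$---the last of which is absorbed via Corollary~\ref{cor:mpr-shift}---the intervals $\cI_t \times \cI_x$ from the theorem statement remain valid. Restricting $J$ to $J_0 = (0,\delta_0)$, Theorem~\ref{t-extrapolatsystem} verifies Amann's hypothesis (2) for $\bmL \circ \tA^\chi$ with a uniform inverse-operator bound depending only on $(\lambda, \gamma, M)$.

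Amann's theorem then delivers a unique maximal local solution $\bmu \in \bW^{1,r}(0, S; D, X)$ of the cut-off equation $\bmu' + [\bmL(\chi(\bmu))]\bmu = \Phi(\bmu)$, $\bmu(0) = \bmu_0$, with $S \leq \delta_0$. Comparing $\bmu$ with $\overline{\bmu_0}$ (both having trace $\bmu_0$ at $0$) through~\eqref{eq:embedcont} and Lemma~\ref{lem:extension} gives $\|\bmu - \overline{\bmu_0}\|_{\bW^{1,r}(0, S; D, X)} \to 0$ as $S \downarrow 0$, so after a further shrinking of $S$ the solution stays in $B$ on $(0, S)$; hence $\chi(\bmu) = \bmu$ there and $\bmu$ solves the original equation~\eqref{eq:gleidh}. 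Uniqueness for~\eqref{eq:gleidh} transfers by the same confinement argument. The estimate on $(\tau, \sigma) \subseteq (0, S)$ is a direct consequence of Lemma~\ref{lem:maxreg-shifts} combined with the uniform bound from Theorem~\ref{t-extrapolatsystem}, since $\bmL(\bmu)|_{(\tau,\sigma)} \in \Xi(\lambda + \gamma/2, \gamma/2, M+1)$. Finally, if $r > 2$ then $1 - 1/r > 1/2$; picking $\zeta \in (1/2, 1 - 1/r)$ yields $(\bmW^{-1,q}_\fD, \bmW^{1,q}_\fD)_{\zeta, 1} \embeds \bm{L}^q(\Omega)$ via standard real/complex interpolation through the Sobolev-scale middle $[\bmW^{-1,q}_\fD, \bmW^{1,q}_\fD]_{1/2} \simeq \bm{L}^q(\Omega)$, and~\eqref{eq:maxreghoelderembed} then delivers $\bmu \in C^\alpha(\overline{I}; \bm{L}^q(\Omega))$ with $\alpha = 1 - 1/r - \zeta > 0$.
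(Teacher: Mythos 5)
Your overall architecture matches the paper's: apply Amann's Theorem~\ref{t-Amann} with $X=\bmW^{-1,q}_\fD$, $D=\bmW^{1,q}_\fD$, verify hypothesis (2) via Theorem~\ref{t-extrapolatsystem} after a cut-off that forces the tensors into a uniform class $\Xi$, then remove the cut-off near $t=0$ using the continuity of $\tA(\overline{\bmu_0})$ at zero (Lemma~\ref{lem:hilfs2}). The concluding steps --- the norm estimate via Lemma~\ref{lem:maxreg-shifts} and the uniform bound, and the H\"older embedding for $r>2$ via~\eqref{eq:maxreghoelderembed} and Lemma~\ref{lem:lq-embed} --- are also the paper's.

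However, there is a genuine gap at the heart of your argument: the construction of the cut-off. You propose to retract the \emph{argument} $\bmu$ onto a ball $B$ in the maximal regularity space $\bW^{1,r}(J;D,X)$ by a ``Volterra-preserving smooth retraction $\chi$'' and set $\tA^\chi(\bmu)\define\tA(\chi(\bmu))$. You correctly identify that a naive projection is not time-causal, but you do not actually construct $\chi$; the sketch of ``splicing a spatial thresholding at the boundary of $B$ with the causal extension operator $E_S$'' does not obviously yield a map that is simultaneously Volterra, Lipschitz on bounded sets, the identity on $B$, and valued in $B$ (the natural causal candidates, e.g.\ truncating $u$ at the first time $\|u-\overline{\bmu_0}\|_{\bW^{1,r}(0,t;D,X)}$ reaches the radius and extending by $E$, are delicate to make Lipschitz and need not land in $B$). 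The paper avoids this problem entirely by cutting off in the \emph{range} rather than the domain: it composes the tensor values with the scalar Lipschitz clipping $\kappa_\eps$, setting $\tA_\eps(\bm w)_t = \tA_0 + \kappa_\eps(\tA(\bm w)_t - \tA_0)$ componentwise in $\C^{m(d+1)\times m(d+1)}$. This makes $\|\tA_\eps(\bm w)-\tA_0\|_\infty\le\eps$ for \emph{every} $\bm w$ in the maximal regularity space (not just those near $\overline{\bmu_0}$), so Remark~\ref{r-stable} places all the modified tensors in $\Xi(\lambda+\gamma/2,\gamma/2,M+1)$, and the Volterra and Lipschitz properties are inherited trivially since $\kappa_\eps$ acts pointwise in time. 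Unless you can supply the retraction $\chi$ with all the stated properties, your proof is incomplete at this step; replacing it with the pointwise clipping of the tensor values closes the gap.
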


\begin{proof}
  First of all, recall again that the extension $\overline{\bmu_0}$ of
  $\bmu_0$ to the maximal regularity space is well defined and
  available due $q \in \cI_x \subseteq \fI$, see
  Remarks~\ref{rem:extrapolation-always-iso}
  and~\ref{rem:semigroup-laplacian}.

  We next introduce a cut-off procedure to transfer the
  ellipticity assumption on $\tA_0$ to appropriate
  modifications of $\tA(\cdot)$ in order to obtain maximal
  parabolic regularity for these modifications. The cut-off will
  be removed at the end by means of Lemma~\ref{lem:hilfs2}.

  So, for given
  $\eps$, define the (Lipschitz-continuous) cut-off function
  % $\R \to [-\eps,\eps],
  $\kappa_\eps\colon  \C \to \overline{B_\eps(0)}$ by
  \begin{equation*}
    % \kappa(s)=\begin{cases} -\eps & \text{if } s \le -\eps \\
    %   s & \text{if } s \in [-\eps, \eps]\\
    %   \eps & \text{if } s \ge \eps.
    % \end{cases} \qquad  
    \kappa_\eps(s)=\begin{cases}
      s & \text{if } |s| \leq  \eps,\\
      \eps\,\frac{s}{|s|} & \text{if } |s| \ge \eps.
    \end{cases}
  \end{equation*}
  We use $\kappa_\eps$ to define a new coefficient tensor
  function $\tA_\eps$ on
  $\bW^{1,r}(J;\bmW^{1,q}_\fD,\bmW^{-1,q}_\fD)$ by
  setting
  \begin{equation*}
    \tA_\eps(\bm{w})_t \define
    \tA_0 +
    \kappa_\eps\bigl (\tA(\bm{w})_t
    -\tA_0\bigr ) \qquad (\bm{w} \in \bW^{1,r}(J;\bmW^{1,q}_\fD,\bmW^{-1,q}_\fD)), 
  \end{equation*}
  where we apply $\kappa_\eps$ component-wise for $\tA(\bm{w})_t(x)
  -\tA_0(x) \in \C^{m(d+1) \times m(d+1)}$. It follows that we can find $\eps_0
  > 0$ such that
  \begin{equation*}
    \bigl\|\tA_\eps(\bm{w})-\tA_0\bigr\|_\infty \leq 1 \qquad (\bm{w}
    \in \bW^{1,r}(J;\bmW^{1,q}_\fD,\bmW^{-1,q}_\fD),~ \eps \leq
    \eps_0), 
  \end{equation*}
  where
  $\|\cdot\|_\infty$ is the coefficient tensor family norm over
  $J$ as defined in Definition~\ref{def:coeff-tensor-family}.  In
  particular,
  $\|\tA_\eps(\bm{w})\|_\infty$ is bounded \emph{uniformly} for
  $\bm{w} \in \bW^{1,r}(J;\bmW^{1,q}_\fD,\bmW^{-1,q}_\fD)$ and $\eps
  \leq \eps_0$ by $M +1$. Moreover, since $\tA_0$ is
  $\lambda$-weak elliptic with constant $\gamma$, by choosing
  $\eps_0$ smaller if necessary, we obtain that
  \begin{equation*}
    \Re \blangle \bmL_\eps(\bm{w})_t\bmu,\bmu\brangle +
    \bigl(\lambda+\frac\gamma2\bigr)\int_\Omega |\bm{u}|^2 \geq \frac\gamma2
    \int_\Omega|\nabla \bm{u}|^2 \qquad (\bm{u} \in \bmW^{1,2}_\fD,~\eps\leq\eps_0),
  \end{equation*}
  for almost every $t \in J$, \emph{uniformly} for $\bm{w} \in
  \bW^{1,r}(J;\bmW^{1,q}_\fD,\bmW^{-1,q}_\fD)$; see Remark~\ref{r-stable}.
  Thus, overall, there exists $\eps_0 >0$ such that
  \begin{equation*}
    \tA_\eps(\bm{w}) \in \Xi(\lambda+\gamma/2,\gamma/2,M+1)\qquad (\bm{w}
    \in \bW^{1,r}(J;\bmW^{1,q}_\fD,\bmW^{-1,q}_\fD),~ \eps \leq
    \eps_0). 
  \end{equation*} 
  For $\eps \leq \eps_0$, from the assumptions on
  $(r,q)$ and Theorem~\ref{t-extrapolatsystem}, we hence find that the
  elliptic system operator family $(\bmL_\eps(\bm{w})_t)_{t\in
    J}$ induced by
  $\tA_\eps(\bm{w})$ satisfies nonautonomous maximal parabolic
  regularity on
  $L^r(J;\bmW^{-1,q}_\fD)$ with the constant spatial domain
  $\bmW^{1,q}_\fD$ for \emph{every} $\bm{w} \in
  \bW^{1,r}(J;\bmW^{1,q}_\fD,\bmW^{-1,q}_\fD)$. Since this property
  builds upon the coercivity condition~\eqref{eq:ellippt} for
  $\tA_\eps(\bm{w})$ which is \emph{uniform} in $t \in
  J$, it is straight forward to verify that we in fact obtain
  nonautonomous maximal parabolic regularity for
  $\bmL_\eps(\bm{w})$ on
  $L^r(\tau,S;\bmW^{-1,q}_\fD)$ with the constant spatial domain
  $\bmW^{1,q}_\fD$ for every interval $(\tau,S) \subseteq J$ as well.

  We note that since
  $\tA_\eps$ inherits both the Volterra- and Lipschitz-properties from
  $\tA$ via $\kappa_\eps$, the associated system operator
  $\bmL_\eps$ satisfies the assumptions in Theorem~\ref{t-Amann} with
  $D= \bmW^{1,q}_\fD$ and $X=\bmW^{-1,q}_\fD$.

  The assumptions on $\Phi$ are exactly the ones posed on $F$ in
  Theorem~\ref{t-Amann}, and the initial value $\bmu_0$ was also
  assumed to admit the correct regularity. Hence,
  Theorem~\ref{t-Amann} yields a unique maximal solution
  $\bmu_\eps$ on $(0,T_\bullet) \subseteq J$ of the problem
    \begin{equation} \label{eq:gleidh-eps} \bmu'(t) + \bmL_\eps(\bmu)_t \bmu(t) =\Phi(\bmu)(t) \quad \text{in } \bmW^{-1,q}_\fD, \qquad \bmu(0) = \bmu_0 
  \end{equation}
  with
    \begin{equation*}
    \bmu_\eps \in \bW^{1,r}(0,S;\bmW^{1,q}_\fD,\bmW^{-1,q}_\fD) \qquad (S \in (0,T_\bullet)).
  \end{equation*}

  It remains to remove the cut-off to obtain a solution to the
  original problem~\eqref{eq:gleidh}. But this is immediate by
  Lemma~\ref{lem:hilfs2} and the assumption that
  $\tA(\overline{\bmu_0})$ be continuous at zero: Use the lemma to
  determine $\delta \in (0,T_\bullet)$ such that
  \begin{equation*}
    \Bigl\|\tA(\bmu_\eps)_t -
    \tA_0 \Bigr\|_{L^\infty(\Omega;\cL(\C^{m(d+1}))}
    \le \eps % \quad \text{for all}~
    \qquad (\text{a.e.}~t \in [0,\delta]).
  \end{equation*}
  Then $\tA_\eps(\bmu_\eps) = \tA(\bmu_\eps)$ almost everywhere on
  $(0,\delta)$ and~\eqref{eq:gleidh-eps}
  implies that $\bmu \define \bmu_\eps$ is the unique solution to the original
  problem~\eqref{eq:gleidh} on $I \define
  (0,\delta)$ with
  \begin{equation*}
    \bmu \in \bW^{1,r}(I;\bmW^{1,q}_\fD,\bmW^{-1,q}_\fD).
  \end{equation*}
  This shows existence and uniqueness for~\eqref{eq:gleidh}. If
  $r > 2$, then the maximal regularity space embeds into
  $C^\alpha(\overline I;\bm{L}^q(\Omega))$ for some $\alpha > 0$ by
  Lemmata~\ref{l-maxparregfacts} and~\ref{lem:lq-embed}. (Recall that
  $q \in \cI_x \subseteq \fI$.) The norm estimate follows from
  $\tA(\bmu) \in \Xi(\lambda+\gamma/2,\gamma/2,M+1)$ and the
  corresponding uniform statement in Theorem~\ref{t-extrapolatsystem}
  combined with Lemma~\ref{lem:maxreg-shifts}.
\end{proof}

% \begin{remark} \label{r-inftyorfinite} The supposition that $F$ maps into
%   the space $L^\infty(J;\bmW^{-1,q}_\fD)$ is made for technical
%   simplicity only.  As the proof shows, it would be sufficient to
%   demand suitable mapping properties into a space
%   $L^s(J;\bmW^{-1,q}_\fD)$ with sufficiently large, but ultimately
%   \emph{finite} $s>r$.
% \end{remark}

% In the sequel we will give examples for the mappings
% $\Psi^{j.k}$. Here the first aim is, of course, to embed the treatment
% of~\eqref{eq:system} into the result of Theorem~\ref{t-quasilin}. This
% is quite easy, if the mappings $\Psi^{j,k}$ are (matrix-valued)
% superposition operators and if the coefficient function
% $\fB_0$ -- in this case easily determined to the matrix
% \begin{equation*}
%   \left (
%     \begin{array}{ccc}
%       \Psi^{1,1}(u_1(0),\ldots,u_m(0))  & \ldots & \Psi^{1,m}((u_1(0),\ldots,u_m(0))  \\
%       \ldots  & \ldots & \ldots \\
%       \Psi^{m,1}(u_1(0),\ldots,u_m(0))  & \ldots & \Psi^{m,m}((u_1(0),\ldots,u_m(0)) 
%     \end{array}
%   \right ) \fA
% \end{equation*}
% -- leads to an \emph{elliptic system operator}. \textbf{etwas ausfuehrlicher aufschreiben ??}\\
% As a second item, let us come back -- exemplarily -- to the example of
% the system~\eqref{eq:syst01} --~\eqref{eq:syst013}.....
% \begin{remark} \label{r-vanroos} A relevant example, where the operators
%   $\Psi^{j,k}$ are non-local in space, is constained
%   in~\cite[Ch. 5]{KNR09}, for another one see~\cite{CL97}.
% \end{remark}
% \newpage

The integrabilities $(r,q)$ in Theorem~\ref{t-quasilin} will in general be
close to $(2,2)$ which is a certain limitation in its applicability and the
regularity obtained.  However, for these integrabilities we have the uniform
bounds on the parabolic solution operators which will become handy in
Theorem~\ref{thm:quasilinear-2} below when aiming to prove that the solutions
are in fact \emph{global-in-time}.

Under more restrictive assumptions on the time regularity for the coefficient
tensor (global continuity in time) and the integrability $r$, we next derive (H\"older-) continuity for
the solution for space dimension $d=2$. The latter is, of course, a quite
fundamental restriction related to the fact that continuous solutions in the
$\bmW^{-1,q}_\fD$-framework require $q>d$---cf.\ Lemma~\ref{l-Hoelder}---, and
the theory established in Section~\ref{Snonaut4} in general only admits
$q>2$. It is well known that H\"older-continuity for the solution requires a certain degree of
time integrability in the data which will in general not be supplied by
Theorem~\ref{t-extrapolatsystem}. The proof of Theorem~\ref{thm:quasilinear-2}
below thus builds upon nonautonomous maximal parabolic regularity with
continuous dependence on time, which will allow to have nonautonomous maximal
parabolic regularity for every time integrability:

\begin{proposition}[{\cite{Ama04a,PS01}}]
  \label{prop:pruessschnaubelt}
  Let $(\cA(t))_{t\in \overline{J}}$ be an operator family of closed
  operators on $X$ with common domain $D$ such that $\overline{J} \ni t
  \mapsto \cA(t) \in \cL(D \to X)$ is continuous. Then we have the
  following:
  \begin{enumerate}[(i)]
  \item If the family $(\cA(t))$ satisfies nonautonomous
    maximal parabolic $L^r(J;X)$-regularity for some $1<r<\infty$, then every operator
    $\cA(t)$ for $t \in \overline J$ satisfies maximal parabolic regularity on $X$.
  \item If every operator
    $\cA(t)$ for $t \in \overline J$ satisfies maximal parabolic
    regularity on $X$, then the family $(\cA(t))$ satisfies nonautonomous
    maximal parabolic $L^s(J;X)$-regularity for all $1<s<\infty$.
  \end{enumerate}
\end{proposition}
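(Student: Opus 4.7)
The plan is a localization-and-perturbation argument, anchored on Dore's theorem (Remark~\ref{r-invariantr}), which, once maximal parabolic regularity is established for a single operator on one finite interval with some integrability, delivers it on every finite interval and every $s\in(1,\infty)$.

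For~(i), I would fix $t_0\in\overline J$ and exploit the continuity of $t\mapsto\cA(t)\in\cL(D\to X)$ to produce a relatively open subinterval $I_0\subseteq J$ containing $t_0$ on which $\sup_{t\in I_0}\|\cA(t)-\cA(t_0)\|_{\cL(D\to X)}<\eta$, with $\eta$ to be pinned down later. The hypothesis of nonautonomous maximal $L^r(J;X)$-regularity restricts to $I_0$ (the surjectivity on $I_0$ follows by extending data by zero and invoking uniqueness on $J$), and the resulting isomorphism $\partial+\cA\colon\bW^{1,r}_0(I_0;D,X)\to L^r(I_0;X)$ has inverse of some operator norm $\alpha$. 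Since the multiplication operator $\cA(t_0)-\cA\colon\bW^{1,r}_0(I_0;D,X)\to L^r(I_0;X)$ has norm at most $\eta$, a Neumann series argument for $\eta<1/\alpha$ shows that
\[
\partial+\cA(t_0) = (\partial+\cA) + \bigl(\cA(t_0)-\cA\bigr)
\]
is a topological isomorphism between the same spaces. This yields autonomous maximal $L^r(I_0;X)$-regularity of $\cA(t_0)$, which Dore's theorem extends to every finite interval and every $s\in(1,\infty)$.

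For~(ii), I would run the perturbation scheme in reverse and then glue. By continuity of $t\mapsto\cA(t)$ and compactness of $\overline J$, for a prescribed $\eta>0$ I can choose a finite cover of $\overline J$ by overlapping relatively open intervals $I_1,\dots,I_N$ (indexed from left to right, with $0\in I_1$ and $T\in I_N$) and points $t_k\in I_k$ such that $\sup_{t\in I_k}\|\cA(t)-\cA(t_k)\|_{\cL(D\to X)}<\eta$. By hypothesis and Dore, each $\partial+\cA(t_k)$ is an isomorphism $\bW^{1,s}_0(I_k;D,X)\to L^s(I_k;X)$ for every $s\in(1,\infty)$, with operator norm bound $\alpha_k$ on its inverse. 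With $\eta<(\max_k\alpha_k)^{-1}$, the Neumann-series perturbation argument from~(i) yields nonautonomous maximal $L^s(I_k;X)$-regularity of $\cA$ on each $I_k$, and, by the same argument, on every relevant subinterval of $I_k$.

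Gluing then proceeds by induction on $k$. Given $f\in L^s(J;X)$, start with the zero initial value and solve on $I_1$. Inductively, if a solution $u$ has been constructed on $(0,\sigma)$ with $\sigma\in I_k\cap I_{k+1}$, the trace embedding~\eqref{eq:embedcont} supplies $u(\sigma)\in(X,D)_{1-1/s,s}$, and Lemma~\ref{lem:maxreg-shifts} (applied with the nonautonomous maximal regularity on subintervals of $I_{k+1}$ just established) delivers a solution on $(\sigma,\sup I_{k+1})$ with initial value $u(\sigma)$ at time $\sigma$. These pieces concatenate to an element of $\bW^{1,s}(0,\sup I_{k+1};D,X)$; after $N$ steps we have a global solution in $\bW^{1,s}_0(J;D,X)$, and uniqueness propagates along the same chain. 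The main obstacle I foresee is ordering the quantifiers correctly in~(ii): one must first fix the cover $\{I_k\}$ and with it the constants $\alpha_k$, and only then choose a single $\eta$ small enough to make the perturbation argument work uniformly on every piece. Once this coordination is in place, the remainder is routine bookkeeping.
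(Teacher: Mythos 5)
The paper gives no proof of this proposition at all — it is quoted from \cite{Ama04a,PS01} — so the only meaningful comparison is with the literature, and your sketch is indeed the standard perturbation-plus-partition argument used there. Two points need attention before the sketch becomes a proof. First, in (i) you claim that maximal regularity on $J$ ``restricts to $I_0$'' by zero-extension of the data and uniqueness on $J$. For an interior subinterval $(\tau,\sigma)$ with $\tau>0$, both surjectivity and injectivity of $\partial+\cA$ on $(\tau,\sigma)$ hinge on \emph{causality} of the solution operator (data vanishing on $(0,\tau)$ must force the solution to vanish there, and a homogeneous solution on $(\tau,\sigma)$ must admit an extension to $(0,T)$ to which global uniqueness applies). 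Causality is not a formal consequence of unique solvability on $J$; it is itself a lemma, established in the cited references via the reference operator and the extension machinery of Lemma~\ref{lem:extension}. Your route is correct, but this is the one genuinely nontrivial input you have suppressed. Second, your resolution of the quantifier issue in (ii) is stated in the wrong order: if you fix the cover first, the oscillation of $\cA$ on each piece is already determined and cannot afterwards be forced below a later-chosen $\eta$. The correct order is to use compactness of $\overline J$, continuity of $t\mapsto\cA(t)$, and stability of the autonomous maximal regularity constant under small $\cL(D\to X)$-perturbations to obtain a \emph{uniform} bound $\alpha$ on the constants of all $\cA(t)$, $t\in\overline J$ (computed on a fixed reference interval, which dominates the constant on every subinterval in the autonomous, translation-invariant case); then set $\eta<1/(2\alpha)$ and only then choose a finite cover with oscillation below $\eta$. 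With these two repairs the gluing by Lemma~\ref{lem:maxreg-shifts} and the trace embedding~\eqref{eq:embedcont} goes through as you describe.
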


% It is clear that in the setting of
% Proposition~\ref{prop:pruessschnaubelt}~(ii), the family
% $(\cA(t))$ also satisfies nonautonomous maximal parabolic
% $L^s(\tau,S;X)$-regularity for all $1<s<\infty$ and every
% interval $(\tau,S) \subseteq J$.

We obtain the following next existence- and uniqueness theorem, using
$\bm{C}^\alpha(\Lambda) \coloneqq C^\alpha(\Lambda)^m$:

\begin{theorem}[Quasilinear, second version] \label{thm:quasilinear-2} Let
  $d=2$. Let further $\lambda \geq 0$, $\gamma>0$ and $M \geq 0$ be parameters
  and let $\cI_t \times \cI_x$ be the connected intervals from
  Theorem~\ref{t-extrapolatsystem} associated to the set
  $\Xi(\lambda+\gamma/2,\gamma/2,M+1)$. Suppose that for some
  $q \in \cI_x$ and $r>2(1-\frac2q)^{-1}$, the following
  assumptions hold true:
  \begin{enumerate}[(i)]
  % \item $-\bm{\Delta}_q+1$ satisfies maximal
  %   parabolic regularity on $\bmW^{-1,q}_\fD$ with domain $\bmW^{1,q}_\fD$.
  \item
    $\Phi \colon \bW^{1,r}(J;\bmW^{1,q}_\fD,\bmW^{-1,q}_\fD) \to
    L^r(J;\bmW^{-1,q}_\fD)$ is a Volterra map and $\Phi-\Phi(0)$
    is Lipschitz continuous on bounded sets mapping into
    $L^s(J;\bmW^{-1,q}_\fD)$ for some
    $s \in (r,\infty]$.
      \item For every
    $\bmu \in \bW^{1,r}(J;\bmW^{1,q}_\fD,\bmW^{-1,q}_\fD)$,
    $(\tA(\bmu))_{t\in J}$ is a coefficient tensor family such
    that
    \begin{equation*}
      \tA \colon \bW^{1,r}(J;\bmW^{1,q}_\fD,\bmW^{-1,q}_\fD) \to
      C\bigl(\overline J;L^\infty(\Omega;\cL(\C^{m(d+1)}))\bigr)
    \end{equation*}
    is a Volterra map and Lipschitz continuous on bounded sets.
  \item The initial value satisfies
    $\bmu_0 \in \bigl(\bmW^{-1,q}_\fD,\bmW^{1,q}_\fD\bigr)_{1-\frac
      {1}{r},r}$. 
  \item The system operator $\bmL_0$ associated to $\tA_0 \define
    \bigl[\tA(\overline{\bmu_0})\bigr](0)$ is
    $\lambda$-weakly elliptic in the sense of
    Definition~\ref{def:elliptic} with coercivity constant $\gamma$. Moreover,
    $\|\tA_0\| \leq M$.
  \end{enumerate}
    Then %for every  $\Lambda \in \C$
  % , there is $q>2$, such that if
  % $r > 2 \bigl ( 1-\frac {2}{q}\bigr )^{-1}$, then
  the equation
  \begin{equation*} \tag{\ref{eq:gleidh}} \bmu'(t) + \bmL(\bmu)_t\bmu(t) =\Phi(\bmu)(t) \quad \text{in } \bmW^{-1,q}_\fD, \qquad \bmu(0) = \bmu_0 
  \end{equation*}
  admits a unique solution $\bmu$ on an interval
  $I=(0,S)\subseteq J$ with the regularity
  \begin{equation*}
    \bmu \in %  W^{1,r}(I;\bmW^{-1,q}_\fD) \cap L^r(I;\bmW^{1,q}_\fD)
    % =
    \bW^{1,r}(I;\bmW^{1,q}_\fD,\bmW^{-1,q}_\fD).
  \end{equation*}
  In particular, there is $\alpha > 0$ such that
  \begin{equation*}
   u \in \bm{C}^\alpha(\overline{I \times \Omega}).
  \end{equation*}
  % In particular, $\bmu \in
  % C^\alpha(\overline{I\times\Omega};\C^m)$ for some $\alpha >0$.
  % if
  % $\Phi:\mr(J;\bmW^{1,q}-\fD,\bmW^{-1,q}_\fD) \to
  % L^s(J;\bmW^{-1,q}_\fD)$ is a locally Lipschitzian mapping for an
  % $s >r$.
\end{theorem}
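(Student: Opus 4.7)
The proof follows the strategy of Theorem~\ref{t-quasilin} very closely, with two novelties: the stronger continuity-in-time hypothesis on $\tA$ allows us to access the larger time integrability $r$ demanded here, and this larger $r$ in turn delivers the space-time H\"older regularity of the solution. Concretely, I would introduce the radial cut-off $\kappa_\eps$ and the modified tensor $\tA_\eps(\bm{w})_t \define \tA_0 + \kappa_\eps\bigl(\tA(\bm w)_t - \tA_0\bigr)$ exactly as in the proof of Theorem~\ref{t-quasilin}, and choose $\eps_0 > 0$ small enough that $\tA_\eps(\bm w) \in \Xi(\lambda+\gamma/2,\gamma/2,M+1)$ for every $\bm w \in \bW^{1,r}(J;\bmW^{1,q}_\fD,\bmW^{-1,q}_\fD)$ and every $\eps \leq \eps_0$. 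Theorem~\ref{t-extrapolatsystem} applied to this parameter set then provides nonautonomous maximal parabolic $L^{r_0}(J;\bmW^{-1,q}_\fD)$-regularity for these tensors, with common domain $\bmW^{1,q}_\fD$, for some $r_0 \in \cI_t$ near $2$.

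To upgrade $r_0$ to the chosen $r$, I would invoke the time continuity: the assumption that $\tA$ maps into $C(\overline J;L^\infty(\Omega;\cL(\C^{m(d+1)})))$ is preserved by $\kappa_\eps$, whence for each $\bm w$ the operator family $\overline J \ni t \mapsto \bmL_\eps(\bm w)_t \in \cL(\bmW^{1,q}_\fD \to \bmW^{-1,q}_\fD)$ is continuous. Proposition~\ref{prop:pruessschnaubelt}(i) then upgrades the nonautonomous $L^{r_0}$-regularity to autonomous maximal parabolic regularity for every fixed-time slice $\bmL_\eps(\bm w)_t$ on $\bmW^{-1,q}_\fD$ with domain $\bmW^{1,q}_\fD$, and part~(ii) of the same proposition gives back nonautonomous maximal parabolic $L^s(J;\bmW^{-1,q}_\fD)$-regularity for every $s \in (1,\infty)$, in particular for the chosen $r$. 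The same reasoning works verbatim on each subinterval $(0,S) \subseteq J$. This is the crucial technical step and the main obstacle compared to Theorem~\ref{t-quasilin}; it is precisely what the strengthened continuity-in-time hypothesis on $\tA$ buys us.

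The Volterra and locally Lipschitz properties of $\bmL_\eps$ are inherited from $\tA$ through $\kappa_\eps$, so together with the assumptions on $\Phi$ and $\bmu_0 \in (\bmW^{-1,q}_\fD,\bmW^{1,q}_\fD)_{1-1/r,r}$, Theorem~\ref{t-Amann} applies with $X = \bmW^{-1,q}_\fD$ and $D = \bmW^{1,q}_\fD$ to produce a unique maximal-interval solution $\bmu_\eps \in \bW^{1,r}(0,S;\bmW^{1,q}_\fD,\bmW^{-1,q}_\fD)$ to the cut-off problem. The continuity hypothesis on $\tA$ implies in particular continuity of $\tA(\overline{\bmu_0})$ at zero, so Lemma~\ref{lem:hilfs2} yields $\delta > 0$ with $\tA_\eps(\bmu_\eps) = \tA(\bmu_\eps)$ almost everywhere on $(0,\delta)$, and $\bmu \define \bmu_\eps|_{(0,\delta)}$ is then the unique solution to the original problem on $I \define (0,\delta)$ in the asserted regularity class.

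For the space-time H\"older regularity, the condition $r > 2(1-2/q)^{-1} = 2q/(q-2)$ is equivalent to $1/r + 1/q < 1/2$, which allows one to pick $\zeta \in \bigl(\tfrac12 + \tfrac1q,\,1-\tfrac1r\bigr)$. For such $\zeta$, the real interpolation space $(\bmW^{-1,q}_\fD,\bmW^{1,q}_\fD)_{\zeta,1}$ is a Besov-type space of smoothness $2\zeta - 1 > 2/q$, which in dimension $d=2$ embeds into $\bm{C}^\beta(\overline\Omega)$ for some $\beta > 0$ by Sobolev embedding, using the universal extension operator of Remark~\ref{r-fortsetz}. Lemma~\ref{l-maxparregfacts}, in particular the embedding~\eqref{eq:maxreghoelderembed}, then gives $\bmu \in C^\alpha(\overline I; \bm{C}^\beta(\overline\Omega))$ with $\alpha = 1 - 1/r - \zeta > 0$, which in turn embeds into $\bm{C}^{\min(\alpha,\beta)}(\overline{I \times \Omega})$ by a routine triangle-inequality argument, completing the proof.
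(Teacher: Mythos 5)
Your proposal is correct and follows essentially the same route as the paper: cut-off of the tensor around $\tA_0$, extrapolated maximal regularity, the Pr\"uss--Schnaubelt equivalence (Proposition~\ref{prop:pruessschnaubelt}) to reach the large time integrability $r$, Amann's theorem, removal of the cut-off via Lemma~\ref{lem:hilfs2}, and the H\"older embedding, which is exactly Corollary~\ref{c-vectt} (your inline derivation reproduces Lemma~\ref{l-Hoelder} and that corollary). The only cosmetic difference is that you obtain autonomous maximal regularity of the frozen-time operators $\bmL_\eps(\bm w)_\tau$ by first applying the nonautonomous extrapolation near $(2,2)$ and then Proposition~\ref{prop:pruessschnaubelt}(i), whereas the paper gets it directly from the constant-family case of Theorem~\ref{t-extrapolatsystem}; both are valid and feed into part~(ii) in the same way.
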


\begin{proof}
  We proceed as in the proof of Theorem~\ref{t-quasilin} to
  obtain $\eps_0 >0$  such that
  \begin{equation*}
    \tA_\eps(\bm{w}) \in \Xi(\lambda,\gamma/2,M+1)\qquad (\bm{w}
    \in \bW^{1,r}(J;\bmW^{1,q}_\fD,\bmW^{-1,q}_\fD),~ \eps \leq
    \eps_0). 
  \end{equation*}
  This ellipticity condition is uniform in time, which means that \emph{each
    autonomous} operator $\bmL(\bmw)_\tau$ for each $\tau \in \overline J$ and
  $\bmw \in \bW^{1,r}(J;\bmW^{1,q}_\fD,\bmW^{-1,q}_\fD)$ is elliptic in the
  sense of Definition~\ref{def:elliptic} and thus satisfies the coercivity
  condition~\eqref{eq:coerc}. Thus we apply Theorem~\ref{t-extrapolatsystem}
  for every \emph{constant} coefficient tensor family
  $(\tA_\eps(\bmw)_\tau)_{t\in J}$ to find that $\bmL_\eps(\bmw)_\tau$
  satisfies maximal parabolic regularity on $\bmW^{-1,q}_\fD$ for every $\tau
  \in \overline J$. The new
  assumption regarding the coefficient tensors $\tA(\bmw)$ being continuous in
  time for each $\bmw \in \bW^{1,r}(J;\bmW^{1,q}_\fD,\bmW^{-1,q}_\fD)$ clearly
  transfers to $\tA_\eps(\bmw)$. Hence, by
  Proposition~\ref{prop:pruessschnaubelt}, we obtain that
  $\bmL_\eps(\bmw)$ satisfies \emph{nonautonomous} maximal
  parabolic $L^\rho(J;\bmW^{-1,q}_\fD)$-regularity for every $\rho \in (1,\infty)$,
  in particular for $\rho=r$.

  The rest of the proof of existence- and uniqueness is the same
  as before, which gives a unique local-in-time solution $\bmu$
  to~\eqref{eq:gleidh} on some interval $I = (0,S) \subseteq J$ with
  \begin{equation*}
    \bmu \in \bW^{1,r}(0,S;\bmW^{1,q}_\fD,\bmW^{-1,q}_\fD) \qquad (\tau
    \in I).
  \end{equation*}
  The claimed H\"older-regularity follows via
  Corollary~\ref{c-vectt} in the appendix.
\end{proof}

\begin{remark}
  \label{rem:superposition}
  At this point we want to point out a quite major implication of the different
  assumptions in Theorems~\ref{t-quasilin} and~\ref{thm:quasilinear-2}. Since
  functions in the maximal regularity space
  $\bW^{1,r}(J;\bmW^{1,q}_\fD,\bmW^{-1,q}_\fD)$ in the context of
  Theorem~\ref{t-quasilin} will in general \emph{not} be bounded on
  $J \times \Omega$ since the admitted time integrability $r$ will be too
  little even for $d=2$, the assumption that the coefficients $\tA$ be
  Lipschitz-continuous (on bounded sets) from the maximal regularity space into
  $L^\infty(J;L^\infty(\Omega;\cL(\C^{m(d+1)})))$ is quite restrictive.
  Indeed, it essentially rules out a most straightforward nonlinear case of,
  say, a superposition operator induced by a Lipschitz-continuous function
  $\C^m \to \C$, since these are only Lipschitz-continuous on spaces of bounded
  functions with a supremum-type norm. In contrast, in the context of
  Theorem~\ref{thm:quasilinear-2}, functions in the maximal regularity space
  $\bW^{1,r}(J;\bmW^{1,q}_\fD,\bmW^{-1,q}_\fD)$ \emph{will} be bounded on
  $J\times \Omega$ by the assumptions, so a superposition operator construction
  as before would in fact be feasible here. However, let us note that involving
  a smoothing operator in the nonlinearities such as a convolution can enable
  setting of Theorem~\ref{t-quasilin}, and these occur for instance in problems
  with memory effects, see e.g.~\cite[Section~§4]{Ama05}
\end{remark}

Finally we state a modification of the first theorem, Theorem~\ref{t-quasilin},
under whose assumptions one obtains global-in-time solutions. This is based on
the classical sublinear growth-bound for the right-hand side $\Phi$ and
uniformity of the parabolic solution operators as established in
Theorem~\ref{t-extrapolatsystem}. The latter essentially allows us to argue as
in the semilinear case, however with the complication that nonlocality in time
is involved. The Volterra property saves the day here.

Since for a global solution a maximal local-in-time one need be constructed
first, we also need to assume that \emph{every} coefficient tensor $\tA(\bmu)$
induces a $\lambda$-weakly elliptic system operator. Accordingly, we can get
rid of the cut-off procedure and need not assume that $\tA_0$ is continuous in
time at zero.

\begin{theorem}[Quasilinear, global version] \label{t-quasilin-global} Let
  further $\lambda \geq 0$, $\gamma>0$ and $M \geq 0$ be
  parameters and let $\cI_t \times \cI_x$ be the connected intervals from
  Theorem~\ref{t-extrapolatsystem} associated to the set
  $\Xi(\lambda,\gamma,M)$. Suppose that for some $(r,q) \in \cI_t \times \cI_x$
  the following assumptions hold true:
  \begin{enumerate}[(i)]
    % \item $-\bm{\Delta}_q+1$ satisfies maximal parabolic
    %   regularity on $\bmW^{-1,q}_\fD$ with domain $\bmW^{1,q}_\fD$.
  \item
    $\Phi \colon \bW^{1,r}(J;\bmW^{1,q}_\fD,\bmW^{-1,q}_\fD) \to
    L^r(J;\bmW^{-1,q}_\fD)$ is a Volterra map and $\Phi-\Phi(0)$
    is Lipschitz continuous on bounded sets mapping into
    $L^s(J;\bmW^{-1,q}_\fD)$ for some
    $s \in (r,\infty]$. Moreover, there exists a
    constant $C_\Phi$ such that
    \begin{equation*}
      \|\Phi(\bmu)-\Phi(0)\|_{L^s(J;\bmW^{-1,q}_\fD)} \leq C_\Phi
      \|\bmu\|_{\bW^{1,r}(J;\bmW^{1,q}_\fD,\bmW^{-1,q}_\fD)} 
      \quad (\bmu \in \bW^{1,r}(J;\bmW^{1,q}_\fD,\bmW^{-1,q}_\fD)).
    \end{equation*}
     \item For every
    $\bmu \in \bW^{1,r}(J;\bmW^{1,q}_\fD,\bmW^{-1,q}_\fD)$,
    $(\tA(\bmu)_t)_{t\in J}$ is a coefficient tensor family such
    that
    \begin{equation*}
      \tA \colon \bW^{1,r}(J;\bmW^{1,q}_\fD,\bmW^{-1,q}_\fD) \to
      L^\infty\bigl(J;L^\infty(\Omega;\cL(\C^{m(d+1)}))\bigr)
    \end{equation*}
    is a Volterra map and Lipschitz continuous on bounded sets. Moreover, for
    every $\bmu \in \bW^{1,r}(J;\bmW^{1,q}_\fD,\bmW^{-1,q}_\fD)$, the system
    operator $\bmL(\bmu)$ is $\lambda$-weakly
    elliptic in the sense of Definition~\ref{def:elliptic} with
    coercivity constant $\gamma$ and
    $\|\Psi(\bmu)\|_\infty \leq M$.
  \item The initial value satisfies
    $\bmu_0 \in \bigl(\bmW^{-1,q}_\fD,\bmW^{1,q}_\fD\bigr)_{1-\frac
      {1}{r},r}$. 
  \end{enumerate}
  Then %for every $\Lambda \in \C$ 
  % , there is $q>2$, such that if
  % $r > 2 \bigl ( 1-\frac {2}{q}\bigr )^{-1}$, then
  the equation
    \begin{equation*} \tag{\ref{eq:gleidh}} \bmu'(t) + \bmL(\bmu)_t\bmu(t) =\Phi(\bmu)(t) \quad \text{in } \bmW^{-1,q}_\fD, \qquad \bmu(0) = \bmu_0 
  \end{equation*}
  admits a unique \emph{global} solution 
  \begin{equation*}
    \bmu \in %W^{1,r}(J;\bmW^{-1,q}_\fD) \cap L^r(I;\bmW^{1,q}_\fD)
    % =
    \bW^{1,r}(J;\bmW^{1,q}_\fD,\bmW^{-1,q}_\fD).
  \end{equation*}
  Moreover,
  there exists a constant $C$ which depends only on the
  parameters $(\lambda,\gamma,M)$ such that
  \begin{equation*}
    \|\bmu\|_{\bW^{1,r}(J;\bmW^{1,q}_\fD,\bmW^{-1,q}_\fD)} \leq
    C\Bigl[\|\Phi(\bmu)\|_{L^r(J;\bmW^{-1,q}_\fD)} + \|\bmu_0\|_{(\bmW^{-1,q}_\fD,\bmW^{1,q}_\fD)_{1-\frac
        {1}{r},r}}\Bigr].
  \end{equation*}
  If in fact $r>2$, then $u \in C^\alpha(\overline J;\bmL^q(\Omega))$
  for some $\alpha > 0$.
  % In particular, $\bmu \in
  % C^\alpha(\overline{I\times\Omega};\C^m)$ for some $\alpha >0$.
  % if
  % $\Phi:\mr(J;\bmW^{1,q}-\fD,\bmW^{-1,q}_\fD) \to
  % L^s(J;\bmW^{-1,q}_\fD)$ is a locally Lipschitzian mapping for an
  % $s >r$.
  
\end{theorem}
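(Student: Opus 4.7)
The strategy is to first obtain a maximal local-in-time solution along the lines of Theorem~\ref{t-quasilin}, and then to rule out finite-time blow-up by an a priori estimate fueled by the \emph{uniform} bounds in Theorem~\ref{t-extrapolatsystem} combined with the new sublinear growth assumption on $\Phi$. For the local step, a crucial simplification occurs: the assumption that $\bmL(\bmu)$ be $\lambda$-weakly elliptic for \emph{every} $\bmu \in \bW^{1,r}(J;\bmW^{1,q}_\fD,\bmW^{-1,q}_\fD)$, together with $\|\tA(\bmu)\|_\infty \leq M$, directly places $\tA(\bmu)$ in $\Xi(\lambda,\gamma,M)$ for every such $\bmu$. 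Theorem~\ref{t-extrapolatsystem} therefore supplies nonautonomous maximal parabolic $L^r(\tau,S;\bmW^{-1,q}_\fD)$-regularity for every $\bmu$ and every subinterval $(\tau,S) \subseteq J$, with operator norm bound depending only on $(\lambda,\gamma,M)$. Hence the hypotheses of Amann's Theorem~\ref{t-Amann} are met without any need for the $\kappa_\eps$-cut-off of Theorem~\ref{t-quasilin} (consequently, no ``continuity at zero'' assumption on $\tA_0$ is required), yielding a unique maximal solution $\bmu$ on some $(0,S_\bullet) \subseteq J$ with $\bmu \in \bW^{1,r}(0,S;\bmW^{1,q}_\fD,\bmW^{-1,q}_\fD)$ for every $S < S_\bullet$.

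Next I would establish an a priori estimate on short windows. Fix $S < S_\bullet$ and let $\bmv \define E_S(\bmu|_{(0,S)}) \in \bW^{1,r}(J;\bmW^{1,q}_\fD,\bmW^{-1,q}_\fD)$ be the extension supplied by Lemma~\ref{lem:extension}. By the Volterra property of $\tA$ and $\Phi$, the evaluations $\tA(\bmv)$ and $\Phi(\bmv)$ agree on $(0,S)$ with $\tA(\bmu)$ and $\Phi(\bmu)$. The uniform estimate of Theorem~\ref{t-extrapolatsystem} (with $\Lambda = 0$) combined with Lemma~\ref{lem:maxreg-shifts} then produces a constant $C_0 = C_0(\lambda,\gamma,M)$ with
\[
\|\bmu\|_{\bW^{1,r}(0,S;\bmW^{1,q}_\fD,\bmW^{-1,q}_\fD)} \leq C_0\Bigl[\|\Phi(\bmu)\|_{L^r(0,S;\bmW^{-1,q}_\fD)} + \|\bmu_0\|_{(\bmW^{-1,q}_\fD,\bmW^{1,q}_\fD)_{1-1/r,r}}\Bigr].
\]
Now I bound the right-hand side: Hölder in time converts the $L^s$-control into $L^r$-control picking up a factor $S^{1/r-1/s}$, applying the sublinear growth bound to $\bmv$ and using Lemma~\ref{lem:extension} to compare $\|\bmv\|_{\bW^{1,r}(J)}$ with $\|\bmu\|_{\bW^{1,r}(0,S)} + \|\bmu_0\|_{\text{trace}}$, this yields
\[
\|\Phi(\bmu)\|_{L^r(0,S;\bmW^{-1,q}_\fD)} \leq \|\Phi(0)\|_{L^r(J;\bmW^{-1,q}_\fD)} + S^{1/r-1/s} C_\Phi C_E\Bigl[\|\bmu\|_{\bW^{1,r}(0,S)} + \|\bmu_0\|_{\text{trace}}\Bigr].
\]
Choosing $\delta$ so that $C_0 C_E C_\Phi \delta^{1/r-1/s} \leq 1/2$, one absorbs the $\|\bmu\|$-term and obtains a bound on $\|\bmu\|_{\bW^{1,r}(0,S;...)}$ uniform in $S \in (0, \min(\delta, S_\bullet))$ in terms of $\|\Phi(0)\|_{L^r(J)}$ and $\|\bmu_0\|_{\text{trace}}$.

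Finally, I would iterate this window estimate over the finite partition $0 < \delta < 2\delta < \cdots < T$, at each step using Lemma~\ref{lem:maxreg-shifts} with initial value $\bmu(k\delta)$ (well-defined and bounded in the trace space by the embedding \eqref{eq:embedcont} of Lemma~\ref{l-maxparregfacts}), and telescoping the resulting inequalities. After finitely many steps this produces
\[
\sup_{S < S_\bullet} \|\bmu\|_{\bW^{1,r}(0,S;\bmW^{1,q}_\fD,\bmW^{-1,q}_\fD)} < \infty,
\]
with an overall bound depending only on $(\lambda,\gamma,M)$, $\|\Phi(0)\|_{L^r(J)}$ and $\|\bmu_0\|_{\text{trace}}$. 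Via the continuous embedding into $C(\overline J;(\bmW^{-1,q}_\fD,\bmW^{1,q}_\fD)_{1-1/r,r})$, this precludes blow-up as $t \uparrow S_\bullet$; if $S_\bullet < T$, restarting Amann's theorem from $t = S_\bullet$ with initial value $\bmu(S_\bullet)$ and invoking the Volterra structure as in Remark~\ref{rem:amann}(iv) produces a strict extension, contradicting maximality. Hence $S_\bullet = T$, the claimed global estimate is the accumulated window estimate, and for $r>2$ the Hölder regularity in $\bmL^q(\Omega)$ follows exactly as at the end of the proof of Theorem~\ref{t-quasilin}. The main technical obstacle throughout is the Volterra bookkeeping: restricting the equation to a subinterval requires evaluating the nonlocal-in-time $\tA$ and $\Phi$ on functions that live on all of $J$, and this is handled cleanly only because the Volterra property makes the extension $E_S$ of Lemma~\ref{lem:extension} invisible to both nonlinearities on $(0,S)$.
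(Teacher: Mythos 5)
Your proposal is correct and follows essentially the same route as the paper: drop the cut-off thanks to the uniform ellipticity of every $\tA(\bmu)$, obtain the maximal solution from Theorem~\ref{t-Amann}, and then combine the uniform bounds of Theorem~\ref{t-extrapolatsystem} with the Volterra property, the extension operator of Lemma~\ref{lem:extension} and the H\"older-in-time gain $|S-\tau|^{\frac1r-\frac1s}$ from the sublinear growth bound to absorb the solution norm on a sufficiently short window. The only immaterial difference is that the paper performs the absorption on a single terminal window $(\tau,T_\bullet)$ with $\tau$ chosen close to $T_\bullet$ and then invokes Amann's characterization of maximality ($\bmu\in\bW^{1,r}(J_\bullet;\bmW^{1,q}_\fD,\bmW^{-1,q}_\fD)$ forces $J_\bullet=J$), whereas you iterate the window estimate over a uniform partition of $(0,T)$ and conclude by a continuation argument; both are valid, and note only that the stated final estimate (in terms of $\|\Phi(\bmu)\|_{L^r(J;\bmW^{-1,q}_\fD)}$) follows directly from Theorem~\ref{t-extrapolatsystem} once globality is known, rather than from your telescoped bound.
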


\begin{proof}
  As mentioned before, due to the new assumption that the system operator
  induced by $\tA(\bmu)$ is in the class $\Xi(\lambda,\gamma,M)$ for
  \emph{every} $\bmu \in \bW^{1,r}(J;\bmW^{1,q}_\fD,\bmW^{-1,q}_\fD)$, we can
  dispose of the cut-off procedure used in the proof of
  Theorem~\ref{t-quasilin}. Indeed, according to
  Theorem~\ref{t-extrapolatsystem}, every the operator $\bmL(\bmu)$ satisfies
  nonautonomous maximal $L^r(\tau,S;\bmW^{-1,q}_\fD)$-regularity with constant
  domain $\bmW^{1,q}_\fD$ for every $(\tau,S) \subseteq J$ under this
  assumption. Thus, the assumptions of the general Theorem~\ref{t-Amann} are
  satisfied directly and the latter yields a maximal local-in-time solution
  $\bmu$ to~\eqref{eq:gleidh} on an interval
  $J_\bullet \define (0,T_\bullet)$ which is characterized by
  \begin{equation*}
    \bmu \in \bW^{1,r}(0,S;\bmW^{1,q}_\fD,\bmW^{-1,q}_\fD) \quad (S
    \in J_\bullet), \quad \text{but} \quad \bmu \notin \bW^{1,r}(J_\bullet;\bmW^{1,q}_\fD,\bmW^{-1,q}_\fD).
  \end{equation*}  
  We are going to use the assumption on globally sublinear
  growth on $\Phi$ to show that in fact
  $\bmu \in \bW^{1,r}(J_\bullet;\bmW^{1,q}_\fD,\bmW^{-1,q}_\fD)$ which
  means that $J_\bullet = J$~(\cite[Thm.~2.1]{Ama05b}). It will
  be sufficient to show that $\bmu \in
  \bW^{1,r}(\tau,T_\bullet;\bmW^{1,q}_\fD,\bmW^{-1,q}_\fD)$ for some $\tau \in J_\bullet$.

  To this end, let $\tau \in J_\bullet$---we will choose
  it later in a suitable way---and $S \in
  (\tau,T_\bullet)$. Set $\Phi_0 \define  \|\Phi(0)\|_{L^r(0,T;\bmW^{-1,q}_\fD)}$. We claim that
  \begin{multline}\label{eq:global-self-bound}
    \|\bmu\|_{\bW^{1,r}(\tau,S;\bmW^{1,q}_\fD,\bmW^{-1,q}_\fD)}
    \lesssim
    |T_\bullet-\tau|^{\frac1r-\frac1s}\|\bmu\|_{\bW^{1,r}(\tau,S;\bmW^{1,q}_\fD,\bmW^{-1,q}_\fD)}
    \\ +
    \|\bmu(\tau)\|_{(\bmW^{-1,q}_\fD,\bmW^{1,q}_\fD)_{1-1/r,r}} + \Phi_0
  \end{multline}
  and the implicit constant is \emph{uniform} in $S$. To show this, we
  first make use of the maximal regularity space extension operator
  established in Lemma~\ref{lem:extension} and then employ the
  sublinear growth condition. (Again, recall
  $q \in \cI_x \subseteq \fI$ and
  Remark~\ref{rem:semigroup-laplacian}.)

  Indeed, note
  that for every $S \in (\tau,T_\bullet)$,
  the Volterra property implies that $\sigma_\tau\bmu \define \bmu(\cdot + \tau)$ is the unique
  solution to
  \begin{equation*}
    \bmv'(t) + \bmL(E_{S-\tau}\sigma_\tau\bmu)_t \bmv(t) =\Phi(E_{S-\tau}\sigma_\tau \bmu)(t)
    \quad \text{in } \bmW^{-1,q}_\fD, \qquad \bmv(0) = \bmu(\tau)  
  \end{equation*}
  on $(0,S-\tau)$, where $E_{S-\tau}\sigma_\tau \bmu$ is the extension of $\sigma_\tau \bmu$
  to $\bW^{1,r}(0,T;\bmW^{1,q}_\fD,\bmW^{-1,q}_\fD)$, cf.\
  Lemma~\ref{lem:extension}. Clearly,
  \begin{equation*}
    \|\sigma_\tau\bmu\|_{\bW^{1,r}(0,S-\tau;\bmW^{1,q}_\fD,\bmW^{-1,q}_\fD)}
    =  \|\bmu\|_{\bW^{1,r}(\tau,S;\bmW^{1,q}_\fD,\bmW^{-1,q}_\fD)} 
  \end{equation*}
  for every $S \in (\tau,T_\bullet)$. Using
  Theorem~\ref{t-extrapolatsystem} and
  Lemma~\ref{lem:maxreg-shifts}, we thus obtain
  \begin{multline*}
    \|\bmu\|_{\bW^{1,r}(\tau,S;\bmW^{1,q}_\fD,\bmW^{-1,q}_\fD)}
   \\ \leq
    C(\lambda,\gamma,M)\Bigl[\|\Phi(E_{S-\tau}\sigma_\tau \bmu)\|_{L^r(0,S-\tau;\bmW^{-1,q}_\fD)}
    + \|\bmu(\tau)\|_{(\bmW^{-1,q}_\fD,\bmW^{1,q}_\fD)_{1-\frac{1}{r},r}}\Bigr].
  \end{multline*}
  With the sublinear growth assumption on $\Phi$ and the uniform extension
  estimate as in  Lemma~\ref{lem:extension},
  \begin{align*}
    &\|\Phi(\sigma_\tau\bmu)\|_{L^r(0,S-\tau;\bmW^{-1,q}_\fD)} 
    \\ & \qquad \leq |S-\tau|^{\frac1r-\frac1s}
    \|\Phi(E_{S-\tau}\sigma_\tau \bmu)-\Phi(0)\|_{L^s(0,T;\bmW^{-1,q}_\fD)}
    +\Phi_0 \\ & \qquad \leq |S-\tau|^{\frac1r-\frac1s}
    C_E C_\Phi
    \Bigl[\|\bmu\|_{\bW^{1,r}(\tau,S;\bmW^{1,q}_\fD,\bmW^{-1,q}_\fD)} +
    \|\bmu(\tau)\|_{(\bmW^{-1,q}_\fD,\bmW^{1,q}_\fD)_{1-\frac{1}{r},r}}\Bigr]
    +\Phi_0.
  \end{align*}
  % where we also have used the estimate in Lemma~\ref{lem:extension}
  % with $C_E$. 
  Hence we obtain~\eqref{eq:global-self-bound} with
  the implicit constant
  \begin{equation}\label{eq:global-const-def}
    C(\tau)) \define \bigl(1+C_E C_\Phi|T_\bullet-\tau|^{\frac1r-\frac1s}\bigr) \, C(\lambda,\gamma,M).
  \end{equation}
  and this is indeed uniform in $S$.

  Now choose $\tau$ such that $|T_\bullet -
  \tau|^{\frac1r-\frac1s}C(\tau) \leq \frac12$.  Then~\eqref{eq:global-self-bound} shows that
  \begin{equation*}
    \|\bmu\|_{\bW^{1,r}(\tau,S;\bmW^{1,q}_\fD,\bmW^{-1,q}_\fD)} \leq
    2C(\tau)\Bigl[%\|\Phi(0)\|_{L^r(0,T;\bmW^{-1,q}_\fD)}
    \Phi_0+
    \|\bmu(\tau)\|_{(\bmW^{-1,q}_\fD,\bmW^{1,q}_\fD)_{1-\frac{1}{r},r}}\Bigr]
  \end{equation*}
  \emph{uniformly} in
  $S \in (\tau,T_\bullet)$. Hence, by
  the monotone convergence theorem,
  \begin{equation*}
    \|\bmu\|_{\bW^{1,r}(\tau,T_\bullet;\bmW^{1,q}_\fD,\bmW^{-1,q}_\fD)} =
    \limsup_{S \uparrow
      T_\bullet}\|\bmu\|_{\bW^{1,r}(\tau,S;\bmW^{1,q}_\fD,\bmW^{-1,q}_\fD)}
    < \infty
  \end{equation*}
  and it follows that
  $\bmu \in \bW^{1,r}(0,T_\bullet;\bmW^{1,q}_\fD,\bmW^{-1,q}_\fD)$. Thus $\bmu$
  is a global solution. The embedding into
  $\bm{C}^\alpha(\overline J;\bmL^q(\Omega))$ follows as before in
  Theorem~\ref{t-quasilin}.
\end{proof}

\begin{remark}
  \label{rem:hoelder-uniform}
  It would of course be of high interest to also obtain global solutions in the
  H\"older-regularity setting of Theorem~\ref{thm:quasilinear-2}. However, for
  this setting, we lack the uniform estimates for the parabolic solution
  operator which are available for $(r,q)$ close to $(2,2)$ as in
  Theorem~\ref{t-extrapolatsystem}. We propose that such a global result would
  be reachable for example if one admitted the nonlinearities to be defined on
  a H\"older space instead of the maximal regularity space and one had uniform
  estimates for the H\"older norm of the solution to nonautonomous parabolic
  system problems in some sense; in the scalar case, such a result is
  established in~\cite{MR16,HMN23}, but for systems, these kinds of 
  results are very delicate and apparently not available.
\end{remark}

\subsection{Example: Multi-species Chemotaxis}\label{sec:example}

As a possible application of the foregoing theorems, we consider a
multi-species chemotaxis model given by two coupled systems of (simplified)
Keller-Segel type going back to~\cite{KS70}. In these models one considers
species of bacteria whose movement is influenced by both natural diffusion and
interaction with a chemo-attractant; the latter may be both attraction and
repulsion. The Keller-Segel model is very well established and probably one of
the most researched biological models. We refer e.g.\ to the
surveys~\cite{Hor03,Hor04} and~\cite{ArumugamTyagi2020} for a comprehensive
overview. In a multi-species model, one would have an interaction between the
species and the respective chemo-attractants, for example, one species creating
or consuming the chemo-attractant for the other. Such multi-species problems
with particular choices of the interaction functions are widely researched, see
for instance~\cite{WinklerTao2015} and the references therein. A version of
such a multi-species system could be
\begin{equation}\label{eq:keller-couple}
  \begin{aligned}
    \partial_t u_1 - \divv(\kappa_1(u_1,v_1)\nabla u_1) 
    & = \divv\bigl(\sigma_1(u_1,v_1)(\nabla v_1-\nabla u_2)\bigr)\qquad &
    \text{in }J \times \Omega, \\
    \partial_t v_1 - \alpha_1 \Delta v_1 & = g_1(u_1,v_1,u_2,v_2) &
    \text{in } J \times \Omega, \\
    \partial_t  u_2 - \divv(\kappa_2(u_2,v_2)\nabla u_2) 
    & = \divv\bigl(\sigma_2(u_2,v_2)(\nabla v_2-\nabla u_1)\bigr)  \qquad &
    \text{in }J \times \Omega, \\
    \partial_t  v_2 - \alpha_2 \Delta v_2 & = g_2(u_1,v_1,u_2,v_2) &
    \text{in } J \times \Omega,
  \end{aligned}
\end{equation}
where the unknowns $u_i(t,x)$ and $v_i(t,x)$ describe the species density and
chemo-attractant concentration for either species $i=1,2$ at time $t$ and
spatial coordinate $x$. (These should thus be nonnegative functions with values
between $0$ and $1$.) Here we assume that the interaction between species is
manifested by either species perturbing the movement towards the
chemo-attractant for the other; this are the right-hand sides for the species
densities. Also, the dynamics for the chemo-attractants may depend on all
involved quantities. Typically, the nonlinear functions $g_i$ will be of
polynomial type. The system is to be complemented by initial values and
appropriate boundary conditions. For simplicity we suppose that the diffusion
coefficient functions $\kappa_i$ and $\sigma_i$ are real scalar functions and
that $\alpha_i > 0$. A particular feature to be highlighted here is that while
the $\kappa_i$ supposed to be (strictly) positive for every pair $(u_i,v_i)$ of
nonnegative functions, the coefficients $\sigma_i$ associated to the
chemo-attractant drift are neither restricted in sign nor in size in
general. Often considered choices for $\sigma_i$ include
$\sigma_i(u_i,v_i) = \chi \cdot u_i$ or $\chi \cdot \frac{u_i}{v_i}$ with a
constant $\chi$.

The system
tensor associated to~\eqref{eq:keller-couple} is given by
\begin{equation*}
  A(u_1,v_1,u_2,v_2) \define
  \begin{pmatrix}
    \kappa_1(u_1,v_1) &  \sigma_1(u_1,v_1) &  -\sigma_1(u_1,v_1) & 0 \\ 0 &
     \alpha_1 & 0 & 0 \\
    -\sigma_2(u_2,v_2) & 0 & \kappa(u_2,v_2) &  \sigma_2(u_2,v_2)  \\ 0 & 0 & 0 & \alpha_2
  \end{pmatrix},
\end{equation*}
where, strictly speaking, each entry in the tensor is the corresponding
multiple of the $(d \times d)$ identity matrix. Consider a subtensor of $\fA$
obtained by symmetrically removing up to three rows and columns. (That is, one
removes both column $i$ and row $i$.) Doing so, if the Legendre-Hadamard
condition is to hold for the full system at initial time, then the bilinear
form induced by the subtensor, interpreted as a real matrix as it stands, must
necessarily be positive definite. (In the Legendre-Hadamard condition as in
Remark~\ref{r-suff}, this corresponds to choosing $\eta$ with, say,
$|\eta| = 1$ fixed and setting $\zeta_i = 0$ in order to remove column and row
$i$.) It is easy to give constellations of coefficient function values for
which this will fail, for example with the subtensor obtained by removing
columns and rows three and four of $\fA$, considered at initial time:
\begin{equation*}
  \left.\begin{pmatrix}
    \kappa_1(u_1,v_1) & \sigma_1(u_1,v_1) \\ 0 & \alpha_1
  \end{pmatrix}\right|_{t=0} \enifed \begin{pmatrix}
    \kappa_1 & \sigma_1 \\ 0 & \alpha_1
  \end{pmatrix} \quad \text{with} \quad |\sigma_1| \geq 2 \sqrt{\kappa_1\alpha_1}.
\end{equation*}
Thus, under such a condition, the system tensor $\fA$
associated to~\eqref{eq:keller-couple} will fail to satisfy the
Legendre-Hadamard condition at initial time. On the other hand, at the same
time, the subtensor obtained by removing columns two and four of
$\fA$ might even satisfy the strong Legendre-condition:
\begin{equation*}
 \left.\begin{pmatrix}
    \kappa_1(u_1,v_1) & -\sigma_1(u_1,v_1) \\ - \sigma_2(u_2,v_2) & \kappa_2(u_2,v_2)
  \end{pmatrix}\right|_{t=0} \enifed \begin{pmatrix}
    \kappa_1 & -\sigma_1 \\ - \sigma_2 & \kappa_2
  \end{pmatrix}  \text{ with }  \min(\kappa_1,\kappa_2)
  - \frac{|\sigma_1+\sigma_2|}2 > 0.
\end{equation*}
This condition does not contradict the one derived for failure
of the Legendre-Hadamard condition of the other subtensor. Hence, if both are satisfied \emph{and} we manage to
implicitly solve for the chemo-attractant concentrations
$(v_1,v_2) = \cS(u_1,u_2)$ in~\eqref{eq:keller-couple} in a
suitably well-behaved dependence of the species densities
$(u_1,u_2)$, then the resulting coefficient tensor
\begin{equation*}
  \overline\fA(u_1,u_2) \define
  \begin{pmatrix}
    \kappa_1(u_1,\cS_1(u_1,u_2)) & -\sigma_1(u_1,\cS_1(u_1,u_2))
    \\ -\sigma_2(u_2,\cS_2(u_1,u_2)) & \kappa_2(u_1,\cS_1(u_1,u_2))
  \end{pmatrix}
\end{equation*}
induces a \emph{strongly elliptic} operator
$\bmL(u_1,u_2)_0 = -\nabla \cdot \overline\fA(u_1,u_2)_0\nabla$ at initial
time. This would make the full system~\eqref{eq:keller-couple}
generally amendable by the existence- and uniqueness results
from the foregoing section---under appropriate assumptions on
the nonlinearities---by first considering the reduced, strongly
elliptic system in $(u_1,u_2)$, and then re-discovering
$(v_1,v_2)$. Note for example that in space dimension $d=2$, we
have $\bmL^{q/2}(\Omega) \embeds \bmW^{-1,q}_\fD(\Omega)$ for $q>2$, so nonlinear
functions of quadratic type for $g_1,g_2$, as they occur often in
reaction terms, would fit in the
framework of Theorems~\ref{t-quasilin}
and~\ref{t-quasilin-global} with $r,q>2$, since the associated
maximal regularity spaces embed into
$C^\alpha(\overline J;\bmL^q(\Omega))$. We leave the details for future
work.

\begin{remark}
  We note that in system~\eqref{eq:keller-couple} it is
  \emph{not} feasible to implicitly solve for either species density
  $u_i$ in dependence of the other, say, $u_2$ in dependence of
  $u_1$, if one wants to rely on the existence- and uniqueness
  results from the foregoing section basing on
  Theorem~\ref{t-Amann}. This is because by solving implicitly,
  the resulting term involving $\nabla u_2$ in the equation for
  $u_1$ would become absorbed in a nonlinear function
  $\cF(u_1)$. However, the nonlinear functions $\cF$ in the context
  of Theorem~\ref{t-Amann} and the following associated ones
  must be Lipschitz-continuous with \emph{higher time integrability} than
  the sought-for solution. Since
  we work in the framework of maximal parabolic regularity here,
  an increase in time integrability of $\nabla u_2$ must be
  reflected in increased time integrability in the data for the
  $u_2$ equation which in turn involves $\nabla u_1$, creating a
  feedback loop. It thus seems that the particular
  form~\eqref{eq:keller-couple} in fact necessitates to treat
  the subsystem for the species densities $(u_1,u_2)$ as an
  actual \emph{system}.
\end{remark}

\appendix

\section{Interpolation embeddings}

In Lemma~\ref{l-maxparregfacts}, we have seen that interpolation spaces between
the domain of an operator satisfying maximal parabolic regularity and the
ambient spaces are of relevance. In the case of elliptic operators admitting
optimal regularity on $\bmW^{-1,q}_\fD$, this concerns interpolation spaces of
type $(\bmW^{-1,q}_\fD,\bmW^{1,q}_\fD)_{1-\frac {1}{r},r}$ for which we derive
some embeddings below. The results are used in the theorems in
Section~\ref{Snonaut6}. In their proofs, we make use of several
general results from interpolation theory for which we generally refer to
the comprehensive book of Triebel~\cite{Triebel78}. Recall also the set of
isomorphism indices $\fI$ for the Laplacian as in Definition~\ref{d-isoindex}.

\begin{lemma}
  \label{lem:lq-embed}
   Let $q \in \fI \cup (2,\infty)$ and $\theta \in [\frac12,1)$. Then
  \begin{equation*}% \label{eq:interembeedeLq}
    \bigl(\bmW^{-1,q}_\fD,\bmW^{1,q}_\fD\bigr)_{\theta,1}  \embeds
   \bigl[\bmW^{-1,q}_\fD,\bmW^{1,q}_\fD\bigr]_{\frac12} \embeds
    \bm{L}^q(\Omega).
  \end{equation*}
  For $q \in \fI$, the latter embedding is in fact an equality up to equivalent norms.
  % Morevoer, there is $\alpha > 0$ such that
  % \begin{equation*}
  %   \bW^{1,r}(J;\bmW^{1,q}_\fD,\bmW^{-1,q}_\fD) \embeds
  %   C^\alpha(\overline J;\bm{L}^q(\Omega)).
  %\end{equation*}
  % Let $q \geq 2$ and
  % $\theta \in (\frac12,1)$. Then
  % \begin{equation*}
  %   \bigl(W^{-1,q}_\fD,W^{1,q}_\fD\bigr)_{\theta,1} \embeds
  %   \bigl[W^{-1,q}_\fD,W^{1,q}_\fD\bigr]_{\frac12} \embeds 
  %   L^q.
  % \end{equation*}
\end{lemma}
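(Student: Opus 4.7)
The plan is to split the claim into the two inclusions plus the equality assertion, handling them with distinct tools.

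For the first inclusion, I would invoke the classical real-complex comparison $(X,Y)_{\theta,1} \embeds [X,Y]_\theta$, valid for every Banach couple $(X,Y)$ and every $\theta \in (0,1)$, see e.g.~\cite[Thm.~1.10.3.1]{Triebel78}. Combined with monotonicity of the complex method---which applies here since $\bmW^{1,q}_\fD \embeds \bmL^q(\Omega) \embeds \bmW^{-1,q}_\fD$, the first embedding being trivial by norm comparison and the second its anti-dual at index $q'$---the family $\theta \mapsto [\bmW^{-1,q}_\fD, \bmW^{1,q}_\fD]_\theta$ is decreasing in $\theta$, so
\begin{equation*}
  [\bmW^{-1,q}_\fD, \bmW^{1,q}_\fD]_\theta \embeds [\bmW^{-1,q}_\fD, \bmW^{1,q}_\fD]_{1/2}
  \qquad (\theta \in [1/2,1)),
\end{equation*}
which finishes the first embedding.

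For the equality $[\bmW^{-1,q}_\fD, \bmW^{1,q}_\fD]_{1/2} = \bmL^q(\Omega)$ in the case $q \in \fI$, my approach goes through the functional calculus of the Laplacian. By Definition~\ref{d-isoindex} the operator $A \define -\bm{\Delta}_q + 1$ is a topological isomorphism $\bmW^{1,q}_\fD \to \bmW^{-1,q}_\fD$, and by Theorem~\ref{thm:laplace-max-reg} its negative generates a bounded analytic semigroup on $\bmW^{-1,q}_\fD$ with domain $\bmW^{1,q}_\fD$. In the geometric setting of Assumption~\ref{assu-general} the square-root machinery underlying Theorem~\ref{thm:laplace-max-reg}, see e.g.~\cite{ABHR14,DtER17,Egert2018}, supplies both bounded imaginary powers for $A$ on $\bmW^{-1,q}_\fD$ and the Kato identification $D(A^{1/2}) = \bmL^q(\Omega)$ up to equivalent norms. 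The standard identity $[X,D(A)]_{1/2} = D(A^{1/2})$ for sectorial operators with BIP then yields the asserted equality.

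The remaining case $q \in (2,\infty) \setminus \fI$, where only the embedding into $\bmL^q(\Omega)$ is claimed, is the main obstacle: $A$ is no longer an isomorphism on the required scale and the previous argument collapses. I would attempt to transfer the question to $\R^d$ by means of the universal extension operator $\bm{E}$ from Remark~\ref{r-fortsetz}, which is simultaneously bounded $\bmW^{1,q}_\fD(\Omega) \to \bmW^{1,q}(\R^d)$ and $\bmL^q(\Omega) \to \bmL^q(\R^d)$ for every $q$, together with its dual $\bm{E}^* \colon \bmW^{-1,q}(\R^d) \to \bmW^{-1,q}_\fD(\Omega)$. On $\R^d$ Fourier multiplier theory supplies $[\bmW^{-1,q}(\R^d), \bmW^{1,q}(\R^d)]_{1/2} = \bmL^q(\R^d)$ for every $q \in (1,\infty)$, and the strategy is to pull this back via a retract-coretract argument. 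The delicate point---where the bulk of the work lies---is to manufacture a compatible extension on the negative-regularity side in the presence of the Dirichlet trace condition encoded by $\fD$, since a bounded extension $\bmW^{-1,q}_\fD(\Omega) \to \bmW^{-1,q}(\R^d)$ is not immediately at hand and must be constructed through careful duality arguments in the spirit of~\cite{ABHR14}.
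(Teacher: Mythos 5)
Your treatment of the first embedding and of the equality for $q \in \fI$ is correct and matches the paper's: real-to-complex comparison $(X,Y)_{\theta,1}\embeds [X,Y]_\theta$ plus monotonicity of the complex method for the ordered couple, and then bounded imaginary powers together with the Kato square root identification $D(A^{1/2}) = \bmL^q(\Omega)$ for the shifted Laplacian. The genuine gap is the case $q \in (2,\infty)\setminus\fI$, which you correctly single out as the obstacle but then do not resolve. The proposed retract--coretract transfer to $\R^d$ hinges on a coretraction $\bmW^{-1,q}_\fD(\Omega) \to \bmW^{-1,q}(\R^d)$ compatible with the extension $\bm{E}$ on the positive and $L^q$ scales; dualizing $\bm{E}$ only produces a map in the restriction direction $\bmW^{-1,q}(\R^d)\to\bmW^{-1,q}_\fD(\Omega)$, and you acknowledge yourself that the needed right inverse is ``not immediately at hand and must be constructed.'' As written, this is a missing construction rather than a proof of the remaining case.

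The paper closes exactly this case by a small but decisive change of viewpoint: instead of insisting on $-\bm{\Delta}_q+1$ with domain $\bmW^{1,q}_\fD(\Omega)$, take $\cA_q$ to be the \emph{part} of $-\bm{\Delta}_2+1$ in $\bmW^{-1,q}_\fD(\Omega)$, with domain $\bm{D}_q$ carrying the graph norm. One always has the continuous inclusion $\bmW^{1,q}_\fD(\Omega) \embeds \bm{D}_q$ (with equality precisely when $q\in\fI$), and for all $q\ge 2$ the operator $\cA_q$ has bounded imaginary powers with $D(\cA_q^{1/2}) = \bmL^q(\Omega)$ by~\cite[Thm.~11.5]{ABHR14}, independently of whether $q$ is an isomorphism index. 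Monotonicity of complex interpolation in the second entry then yields
\begin{equation*}
  \bigl[\bmW^{-1,q}_\fD(\Omega),\bmW^{1,q}_\fD(\Omega)\bigr]_{\frac12} \embeds
  \bigl[\bmW^{-1,q}_\fD(\Omega),\bm{D}_q\bigr]_{\frac12} = \bm{D}_q^{1/2} = \bmL^q(\Omega),
\end{equation*}
which is precisely the one-sided statement the lemma asserts outside $\fI$. Replacing your extension detour by this ``pass to the part of the $L^2$ operator'' step closes the gap while leaving the rest of your argument intact.
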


\begin{proof}
  The first embedding follows from general interpolation principles due to the
  choice $\theta \geq \frac12$~(\cite[Thm.~1.10.3]{Triebel78}), so it is enough
  to show the second. 

  Denote by $\cA_q$ the part of $-\bm{\Delta}_2+1$ in
  $\bmW^{-1,q}_\fD(\Omega)$, and denote by $\bm{D}_q$ its domain equipped with
  the graph norm. If $q \in \fI$, then $\cA_q$ coincides with
  $-\bm{\Delta}_q + 1$ as in Definition~\ref{d-2}. The operator $\cA_q$ admits
  bounded imaginary powers and, denoting the domain of $\cA_q^{1/2}$ by
  $\bm{D}_q^{1/2}$, we have the square root property
  $\bm{D}_q^{1/2} = \bmL^q(\Omega)$ up to equivalent norms. This follows from
  the scalar case and the fact that the system Laplacian is of diagonal form;
  for $q \geq 2$ we refer to~\cite[Thm.~11.5]{ABHR14}, for $q < 2$ but
  $q \in \fI$ we use~\cite[][Prop.~4.6 \& Lem.~11.4]{ABHR14}
  and~\cite[Thm.~6.5]{DtER17}. Further, clearly
  $\bmW^{1,q}_\fD(\Omega) \embeds \bm{D}_q$---with equality up to equivalent norms
  if $q \in \fI$---and so using interpolation for fractional powers of operators
  with bounded imaginary powers~(\cite[Thm.~1.15.3]{Triebel78}),
  \begin{equation*}
    \bigl[\bmW^{-1,q}_\fD(\Omega),\bmW^{1,q}_\fD(\Omega)\bigr]_{\frac {1}{2}} \embeds
    \bigl[\bmW^{-1,q}_\fD(\Omega),\bm{D}_q\bigr]_{\frac {1}{2}} = \bm{D}_q^{1/2} =
    \bm{L}^q(\Omega). \qedhere
  \end{equation*}
\end{proof}

\begin{corollary} \label{c-vect-Lq} Let $q \in \fI \cup (2,\infty)$ and
  $r >2$. Then there is
  $\alpha > 0$ such that
  \begin{equation} \label{eq:interembeedeLq}
    \bigl(\bmW^{-1,q}_\fD(\Omega),\bmW^{1,q}_\fD(\Omega)\bigr)_{1-\frac {1}{r},r} \embeds
    \bmL^q(\Omega)
  \end{equation}
  and
  \begin{equation*}
    \bW^{1,r}(J;\bmW^{1,q}_\fD(\Omega),\bmW^{-1,q}_\fD(\Omega)) \embeds
    C^\alpha\bigl(\overline J;\bmL^q(\Omega)\bigr).
  \end{equation*}
\end{corollary}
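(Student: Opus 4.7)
My plan is to derive both embeddings from Lemma~\ref{lem:lq-embed} using a single auxiliary real interpolation space. The hypothesis $r > 2$ enters exactly once, to guarantee $1 - \frac{1}{r} > \frac{1}{2}$; this opens the window $[\tfrac{1}{2}, 1-\tfrac{1}{r})$, from which I fix an auxiliary index $\zeta$.

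For the first embedding~\eqref{eq:interembeedeLq}, I factor through this auxiliary space:
\begin{equation*}
\bigl(\bmW^{-1,q}_\fD,\bmW^{1,q}_\fD\bigr)_{1-\frac{1}{r},r} \embeds \bigl(\bmW^{-1,q}_\fD,\bmW^{1,q}_\fD\bigr)_{\zeta,1} \embeds \bmL^q(\Omega),
\end{equation*}
where the second arrow is Lemma~\ref{lem:lq-embed} since $\zeta \geq \tfrac{1}{2}$. For the first arrow, I invoke the classical real interpolation fact that for an ordered couple $X_1 \embeds X_0$ and $0 < \theta_0 < \theta_1 < 1$,
\begin{equation*}
(X_0,X_1)_{\theta_1,p} \embeds (X_0,X_1)_{\theta_0,q} \qquad (1 \leq p,q \leq \infty).
\end{equation*}
This admits a quick $K$-functional verification: splitting $\|\cdot\|_{(X_0,X_1)_{\theta_0,q}}$ at $t=1$, on $(0,1)$ one controls $K(t,x) \leq C t^{\theta_1} \|x\|_{(X_0,X_1)_{\theta_1,\infty}}$ (using the trivial embedding into the $\infty$-endpoint of the same $\theta_1$-scale) whose product with $t^{-\theta_0}$ integrates since $\theta_1 - \theta_0 > 0$, while on $(1,\infty)$ one uses $K(t,x) \leq C \|x\|_{X_0+X_1} \lesssim \|x\|_{X_0}$, which combined with $t^{-\theta_0}$ integrates since $\theta_0 > 0$. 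In the case at hand $\bmW^{1,q}_\fD \embeds \bmW^{-1,q}_\fD$ is continuous, so this applies with $\theta_1 = 1 - \tfrac{1}{r}$ and $\theta_0 = \zeta$.

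For the second embedding, I plug the same $\zeta$ into the H\"older trace embedding~\eqref{eq:maxreghoelderembed} of Lemma~\ref{l-maxparregfacts}, which yields
\begin{equation*}
\bW^{1,r}(J;\bmW^{1,q}_\fD,\bmW^{-1,q}_\fD) \embeds C^\alpha\bigl(\overline J; (\bmW^{-1,q}_\fD,\bmW^{1,q}_\fD)_{\zeta,1}\bigr)
\end{equation*}
with $\alpha = 1 - \tfrac{1}{r} - \zeta > 0$. Post-composing with the continuous embedding $(\bmW^{-1,q}_\fD,\bmW^{1,q}_\fD)_{\zeta,1} \embeds \bmL^q(\Omega)$ from Lemma~\ref{lem:lq-embed} then gives the claimed H\"older regularity.

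There is no genuine obstacle here; the only bookkeeping point is to respect the two-sided constraint $\tfrac{1}{2} \leq \zeta < 1 - \tfrac{1}{r}$ on the auxiliary index, which is exactly what the hypothesis $r>2$ enables and which simultaneously unlocks both Lemma~\ref{lem:lq-embed} and the positivity of the H\"older exponent in Lemma~\ref{l-maxparregfacts}.
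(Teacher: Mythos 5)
Your argument is correct and coincides with the paper's proof: the paper likewise picks an auxiliary index $\theta\in(\tfrac12,1-\tfrac1r)$, obtains~\eqref{eq:interembeedeLq} from the monotonicity of real interpolation for ordered couples (cited as \cite[Thm.~1.3.3~(e)]{Triebel78}, which is exactly the fact you verify via the $K$-functional) together with Lemma~\ref{lem:lq-embed}, and gets the H\"older embedding by combining~\eqref{eq:maxreghoelderembed} with Lemma~\ref{lem:lq-embed}. The only cosmetic difference is that you allow $\zeta=\tfrac12$, which Lemma~\ref{lem:lq-embed} indeed permits.
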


\begin{proof}
  Choose $\theta \in
  (\frac12,1-\frac1r)$. Then, via
  the general interpolation principle~\cite[Thm.~1.3.3~(e)]{Triebel78} and
  Lemma~\ref{lem:lq-embed},
  \begin{equation*}
    \bigl(\bmW^{-1,q}_\fD(\Omega),\bmW^{1,q}_\fD(\Omega)\bigr)_{1-\frac {1}{r},r} \embeds
    \bigl(\bmW^{-1,q}_\fD(\Omega),\bmW^{1,q}_\fD(\Omega)\bigr)_{\theta,1} \embeds \bmL^q(\Omega).
  \end{equation*}
  This is~\eqref{eq:interembeedeLq}, and the maximal regularity embedding
  follows from combining~\eqref{eq:maxreghoelderembed} in Lemma~\ref{l-maxparregfacts} with
  Lemma~\ref{lem:lq-embed}.
\end{proof}

\begin{lemma} \label{l-Hoelder} 
  Let $q > d$ and $\alpha \in(0,1- \frac
    {d}{q})$. Then
    % \begin{enumerate}
    % \item \label{inter-1} Let us denote by $C^\alpha_\fD$ the
    %   $C^\alpha$-closure of those
    %   $C^\alpha(\Omega)$-functions, the support of which has a
    %   positive distance to $\fD$. Then one has an embedding
    %   $W^{1,q}_\fD \embeds C^\alpha_\fD$.
    % \item \label{inter-2}
    \begin{equation} \label{eq:heold}
      \bigl(\bmW^{-1,q}_\fD(\Omega),\bmW^{1,q}_\fD(\Omega)\bigr)_{\theta,1} \embeds
      \bm{C}^\alpha(\overline\Omega),
    \end{equation}
    for $\theta \in (\frac12+\frac {d}{2q},1)$ and
    $\alpha = 2\theta -1-\frac{d}q$.
  \end{lemma}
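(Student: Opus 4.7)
The plan is to transfer the interpolation space from $\Omega$ to $\R^d$ via the extension operator of Remark~\ref{r-fortsetz}, and then invoke the classical Besov-H\"older embedding in Euclidean space. Lemma~\ref{lem:lq-embed} serves as the pivot that replaces the negative endpoint $\bmW^{-1,q}_\fD$ by $\bm{L}^q(\Omega)$, which is directly amenable to the extension operator.

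Since $q > d \geq 2$, Lemma~\ref{lem:lq-embed} yields $(\bmW^{-1,q}_\fD(\Omega), \bmW^{1,q}_\fD(\Omega))_{1/2,1} \embeds \bm{L}^q(\Omega)$. The Lions-Peetre reiteration identity (cf.\ \cite[Thm.~1.10.2]{Triebel78}), applied to the pair of intermediate spaces $((\bmW^{-1,q}_\fD, \bmW^{1,q}_\fD)_{1/2, 1}, \bmW^{1,q}_\fD)$ of classes $\mathcal{K}(1/2)$ and $\mathcal{K}(1)$ between $\bmW^{-1,q}_\fD$ and $\bmW^{1,q}_\fD$, gives the equality
\begin{equation*}
  (\bmW^{-1,q}_\fD, \bmW^{1,q}_\fD)_{\theta, 1} = \bigl((\bmW^{-1,q}_\fD, \bmW^{1,q}_\fD)_{1/2, 1}, \bmW^{1,q}_\fD\bigr)_{2\theta - 1, 1}
\end{equation*}
for $\theta \in (1/2, 1)$, where the parameter $2\theta - 1$ is determined by $\theta = \tfrac12 (1 - (2\theta - 1)) + (2\theta - 1)$. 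Interpolation functoriality then yields the embedding
\begin{equation*}
  (\bmW^{-1,q}_\fD, \bmW^{1,q}_\fD)_{\theta, 1} \embeds \bigl(\bm{L}^q(\Omega), \bmW^{1,q}_\fD(\Omega)\bigr)_{2\theta - 1, 1}.
\end{equation*}

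Next, the extension operator $\bm{E}$ from Remark~\ref{r-fortsetz} is continuous simultaneously as $\bm{L}^q(\Omega) \to \bm{L}^q(\R^d)$ and $\bmW^{1,q}_\fD(\Omega) \to \bmW^{1,q}(\R^d)$, so interpolation functoriality gives
\begin{equation*}
  \bigl(\bm{L}^q(\Omega), \bmW^{1,q}_\fD(\Omega)\bigr)_{2\theta - 1, 1} \embeds \bigl(\bm{L}^q(\R^d), \bmW^{1,q}(\R^d)\bigr)_{2\theta - 1, 1} = \bm{B}^{2\theta - 1}_{q, 1}(\R^d),
\end{equation*}
the last identification being classical Besov space theory (cf.\ \cite[Sect.~2.4.2]{Triebel78}). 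The Besov-H\"older embedding $\bm{B}^s_{q, 1}(\R^d) \embeds \bm{C}^{s - d/q}(\R^d)$ for $s > d/q$ then applies with $s = 2\theta - 1$: the hypothesis $\theta > 1/2 + d/(2q)$ is exactly $2\theta - 1 > d/q$, and the resulting H\"older exponent is $\alpha = 2\theta - 1 - d/q$, matching the claim. Restriction to $\overline\Omega$ concludes.

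The main technical point is the reiteration step. Lemma~\ref{lem:lq-embed} only gives a one-sided embedding at level $1/2$ rather than an equality (equality would require $q \in \fI$, which may fail for $q > d$ in high dimensions). However, only this direction is needed here: the reiteration identity itself holds as an equality for the pair $((\bmW^{-1,q}_\fD, \bmW^{1,q}_\fD)_{1/2, 1}, \bmW^{1,q}_\fD)$, and interpolation functoriality then transports the one-sided $\bm{L}^q$-inclusion into the final target embedding in precisely the direction we want.
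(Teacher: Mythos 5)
Your proof is correct and follows essentially the same route as the paper's: reiteration to replace the negative endpoint at parameter $\tfrac12$ by $\bm{L}^q(\Omega)$ via Lemma~\ref{lem:lq-embed}, followed by extension to $\R^d$ and the classical Besov--H\"older embedding. The only cosmetic differences are that the paper reiterates against the complex space $[\bmW^{-1,q}_\fD,\bmW^{1,q}_\fD]_{1/2}$ rather than the real space $(\bmW^{-1,q}_\fD,\bmW^{1,q}_\fD)_{1/2,1}$ (both work --- though note that the inclusion you actually need is the one requiring the intermediate spaces to be of class $\mathcal{J}(\theta_i)$, not $\mathcal{K}(\theta_i)$, which $(\cdot,\cdot)_{1/2,1}$ also satisfies), and that the paper phrases the final step as a multiplicative inequality combined with the $J$-method characterization of real interpolation spaces instead of interpolating $\bm{E}$ directly.
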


\begin{proof}
  % \ref{inter-1}
   Let
  $\theta$ and $\alpha$ be as assumed in the statement.  Then
  $\tau \define 2\theta-1 = \alpha + \frac{d}q$, and with the
  reiteration theorem~(\cite[Thm.~1.10.3.2]{Triebel78}) we have
  \begin{equation} \label{eq:intembedd}
    \bigl(\bmW^{-1,q}_\fD(\Omega),\bmW^{1,q}_\fD(\Omega)\bigr)_{\theta,1} = \Bigl
    (\bigl[\bmW^{-1,q}_\fD(\Omega),\bmW^{1,q}_\fD(\Omega)\bigr]_{\frac {1}{2}},
    \bmW^{1,q}_\fD(\Omega)\Bigr )_{\tau,1}.
  \end{equation}
  By virtue of Lemma~\ref{lem:lq-embed}, the complex
  interpolation space embeds into $\bmL^q(\Omega)$. We thus continue
  in~\eqref{eq:intembedd} to obtain
  \begin{equation} \label{eq:heold1}
    \bigl(\bmW^{-1,q}_\fD(\Omega),\bmW^{1,q}_\fD(\Omega)\bigr)_{\theta,1} \embeds
    \bigl(\bmL^q(\Omega),\bmW^{1,q}_\fD(\Omega)\bigr)_{\tau,1}, \qquad \tau = \alpha + \frac{d}{q}.
  \end{equation}
  It remains to show that the right hand side embeds into the asserted H\"older
  space. Heuristically, the interpolation space should be a fractional Sobolev
  type space whose differentiability order $\tau$ is larger than $d/q$ and
  which should thus embed exactly into the H\"older space
  $\bm{C}^\alpha(\overline\Omega)$ with $\alpha = \tau-d/q$. However, our
  assumptions on $\Omega$ and $\fD$ are not strong enough to prove this
  directly, so we take a detour. (See~\cite{BechtelEgert2019} for a
  comprehensive and precise interpolation theory, allowing to argue in this way
  under slightly stronger assumptions, though.)

  More precisely, we make use of the classical
  relations~(\cite[Ch.~2.4.2~(16) \& Thm.~2.8.1~(4)]{Triebel78})
  \begin{equation}\label{eq:besov-embed}
    \bigl(\bmW^{1,q}(\R^d),\bmL^q(\R^d)\bigr)_{\tau,1} =  \bm{B}^\tau_{q,1}(\R^d) \embeds
    \bm{C}^\alpha(\R^d),
  \end{equation}
  where $\bm{B}^\tau_{q,1}(\R^d)$ is a Besov space and we have tacitly used the
  boldface notation to denote $\C^m$-valued function spaces. We move from $\Omega$ to
  Euclidean space 
  by employing the continuous linear
  extension operator $\bm{E} \colon \bmW^{1,q}_\fD \to \bmW^{1,q}(\R^d)$ which
  simultaneously also provides a continuous extension operator
  $\bm{L}^q(\Omega) \to L^q(\R^d)$. It was already mentioned in
  Remark~\ref{r-fortsetz} that such an operator exists in the given geometric
  setting, and it to estimate, for all
  $\bmu \in \bmW^{1,q}_\fD(\Omega)$, as follows:
  \begin{align*}  \|\bmu \|_{\bm{C}^\alpha(\overline\Omega)} \leq
    \|\bm{E} \bmu \|_{\bm{C}^\alpha(\R^d)}& \lesssim \|\bm{E} \bmu \|
    _{\bm{B}^\tau_{q,1}(\R^d)} \\ &\lesssim \|\bm{E} \bmu\|_{\bmL^q(\R^d)} ^{1-\tau} \|\bm{E}
    \bmu\|_{\bmW^{1,q}(\R^d)} ^{\tau} \lesssim \|\bmu\|_{\bmL^q(\Omega)} ^{1-\tau}
    \|\bmu\|_{\bmW^{1,q}_\fD(\Omega)} ^{\tau}.
  \end{align*}
  The multiplicative inequality used in the middle estimate follows from the
  first embedding in~\eqref{eq:besov-embed}, this is a fundamental property of
  interpolation functors, cf.~\cite[Thm.~1.3.3~(g)]{Triebel78}. Finally, the
  multiplicative inequality obtained overall indeed implies the embedding
  $(\bmL^q(\Omega),\bmW^{1,q}_\fD(\Omega))_{\tau,1} \embeds
  \bm{C}^\alpha(\overline\Omega)$~(\cite[Lem.~1.10.1]{Triebel78}), and this was the claim.
\end{proof}

Note that the foregoing interpolation result is optimal in the sense
that one cannot relax the conditions on $\theta$ and $\alpha$ even if
the geometrical setting is assumed to be arbitrarily smooth; in
particular, it cannot be relaxed even for $\fD = \partial\Omega$ or,
associated, considering the corresponding function spaces on
$\R^d$. 

% \begin{remark} \label{r-embeddd}
%   \begin{enumerate}[(i)]
%   \item The relation between the two operators $-\Delta_q$ introduced
%     in Definition~\ref{d-2} and $A_q$ is discussed exhaustively
%     in~\cite[Ch.~6]{DtER17}.
%   \item This result is optimal insofar as the conditions on $\theta$
%     and $\alpha$ cannot be relaxed if restricting the geometrical
%     setting to an aribitrarily smooth one. This bases, on one hand, on
%     the highly nontrivial result dom$\bigl (A_q)^\frac {1}{2}=L^q$
%     from~\cite{ABHR14}, and on the other hand, on the reduction to
%     $\R^d$-case via the extension operator $\fE$.
%   \end{enumerate}
% \end{remark}

\begin{corollary} \label{c-vectt} Assume $q>d$ and
  $r >2 ( 1-\frac {d}{q})^{-1}$. Then there is
  $\alpha > 0$ such that
  \begin{equation} \label{eq:interembeede}
    \bigl(\bmW^{-1,q}_\fD(\Omega),\bmW^{1,q}_\fD(\Omega)\bigr)_{1-\frac {1}{r},r} \embeds
    \bm{C}^\alpha(\overline \Omega)
  \end{equation}
  and
  \begin{equation*}
    \bW^{1,r}(J;\bmW^{1,q}_\fD(\Omega),\bmW^{-1,q}_\fD(\Omega)) \embeds
    \bm{C}^\alpha(\overline {J \times \Omega}).
  \end{equation*}
\end{corollary}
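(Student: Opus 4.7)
The plan is to obtain~\eqref{eq:interembeede} by reducing to Lemma~\ref{l-Hoelder} via a standard reiteration/monotonicity of the real interpolation functor in $\theta$, and to then combine the first embedding with Lemma~\ref{l-maxparregfacts} to get the space-time H\"older embedding. The arithmetic driving the whole argument is the observation that the hypothesis $r > 2(1-d/q)^{-1}$ is precisely equivalent to
\begin{equation*}
  1-\tfrac1r > \tfrac12+\tfrac{d}{2q},
\end{equation*}
so that the open interval $\bigl(\tfrac12+\tfrac{d}{2q},\,1-\tfrac1r\bigr)$ on which Lemma~\ref{l-Hoelder} is applicable is nonempty.

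For the first embedding, I would fix any $\theta$ in this interval and set $\alpha \coloneqq 2\theta - 1 - d/q > 0$. By the general monotonicity of the real interpolation method in the first parameter (namely, for $\theta < 1-\frac1r$ one has $(A_0,A_1)_{1-1/r,r} \embeds (A_0,A_1)_{\theta,1}$, cf.~\cite[Thm.~1.3.3~(e)]{Triebel78}, exactly as already used in the proof of Corollary~\ref{c-vect-Lq}), I conclude
\begin{equation*}
  \bigl(\bmW^{-1,q}_\fD,\bmW^{1,q}_\fD\bigr)_{1-\frac1r,r} \embeds \bigl(\bmW^{-1,q}_\fD,\bmW^{1,q}_\fD\bigr)_{\theta,1} \embeds \bm{C}^\alpha(\overline\Omega),
\end{equation*}
where the second embedding is Lemma~\ref{l-Hoelder}. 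This gives~\eqref{eq:interembeede}.

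For the space-time embedding, the strategy is to chain the embedding~\eqref{eq:maxreghoelderembed} from Lemma~\ref{l-maxparregfacts} with the embedding just established. Concretely, pick $\zeta \in \bigl(\tfrac12+\tfrac{d}{2q},\,1-\tfrac1r\bigr)$ and set $\beta\coloneqq 1-\tfrac1r-\zeta>0$ and $\alpha'\coloneqq 2\zeta-1-\tfrac{d}{q}>0$. Then Lemma~\ref{l-maxparregfacts} yields
\begin{equation*}
  \bW^{1,r}\bigl(J;\bmW^{1,q}_\fD,\bmW^{-1,q}_\fD\bigr) \embeds C^\beta\bigl(\overline J;\bigl(\bmW^{-1,q}_\fD,\bmW^{1,q}_\fD\bigr)_{\zeta,1}\bigr) \embeds C^\beta\bigl(\overline J;\bm{C}^{\alpha'}(\overline\Omega)\bigr),
\end{equation*}
where the last embedding uses Lemma~\ref{l-Hoelder} componentwise in time. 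Setting $\alpha \coloneqq \min(\beta,\alpha') > 0$ and recalling that $C^\beta(\overline J;\bm{C}^{\alpha'}(\overline\Omega)) \embeds \bm{C}^\alpha(\overline{J\times\Omega})$ by a straightforward product H\"older estimate, the claim follows.

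There is no real obstacle here; the whole statement is a bookkeeping exercise around the numerical condition on $r$, with all substantive content already carried by Lemmata~\ref{l-maxparregfacts} and~\ref{l-Hoelder}. The only mildly delicate point is making sure the admissible range for $\theta$ (resp.\ $\zeta$) is nonempty, which is precisely what the assumption $r > 2(1-d/q)^{-1}$ guarantees.
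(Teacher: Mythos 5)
Your proposal is correct and follows essentially the same route as the paper: choose $\theta$ (resp.\ $\zeta$) in the interval $(\tfrac12+\tfrac{d}{2q},\,1-\tfrac1r)$, which is nonempty exactly by the hypothesis on $r$, use monotonicity of the real interpolation functor together with Lemma~\ref{l-Hoelder} for the spatial embedding, and then chain~\eqref{eq:maxreghoelderembed} from Lemma~\ref{l-maxparregfacts} with that embedding for the space-time statement. The paper merely compresses the second step into ``follows as in the proof of Corollary~\ref{c-vect-Lq}''; your version spells out the same bookkeeping explicitly.
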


\begin{proof}
  Choose 
  $\theta \in(\frac {1}{2}+\frac {d}{2q},1-\frac
  {1}{r})$ and proceed as in the proof of
  Corollary~\ref{c-vect-Lq} to obtain, for $\alpha =
  2\theta-1-\frac{d}{q}>0$, from Lemma~\ref{l-Hoelder},
  \begin{equation*}
    \bigl(\bmW^{-1,q}_\fD(\Omega),\bmW^{1,q}_\fD(\Omega)\bigr)_{1-\frac {1}{r},r} \embeds
    \bigl(\bmW^{-1,q}_\fD(\Omega),\bmW^{1,q}_\fD(\Omega)\bigr)_{\theta,1} \embeds \bm{C}^\alpha(\overline\Omega).
  \end{equation*}
  The remaining part follows as in the proof of Corollary~\ref{c-vect-Lq}.
\end{proof}

\section*{Acknowledgements}

The author appreciates Joachim Rehberg~(WIAS Berlin) and Karoline
Disser~(Universit\"at Kassel) for discussions about the subject, and
is grateful to Alexander Keimer (FAU Erlangen-N\"urnberg) for
suggestions on notation.

\section*{Declaration of interest}

None.

\printbibliography

\end{document}